\newtheorem{theorem}{Theorem}[section]
\newtheorem{proposition}[theorem]{Proposition}
\newtheorem{corollary}[theorem]{Corollary}
\newtheorem{lemma}[theorem]{Lemma}
\newtheorem{example}[theorem]{Example}
\newtheorem{defn}[theorem]{Definition}
\theoremstyle{definition}
\newcommand{\mult}{{\mathrm {mult}}}
\newcommand{\Krew}{{\mathsf {Krew}}}
\newcommand{\krew}{{\mathtt {krew}}}
\newcommand{\Park}{{\mathsf {Park}}}
\newcommand{\rot}{{\mathtt {rot}}}
\newcommand{\cat}{{\mathtt {Cat}}}
\newcommand{\rfn}{{\mathtt {rfn}}}
\newcommand{\abs}{{\mathrm{Abs}}}
\newcommand{\rank}{{\mathrm{rank}}}
\newcommand{\Cat}{{\mathsf{Cat}}}
\newcommand{\Nar}{{\mathsf{Nar}}}
\newcommand{\symm}{{\mathfrak{S}}}
\newcommand{\C}{{\mathbb {C}}}
\newcommand{\Z}{{\mathbb {Z}}}
\newcommand{\N}{{\mathbb{N}}}
\begin{document}

\begin{frontmatter}

\title{Rational Noncrossing Partitions for all Coprime Pairs}
\runtitle{Rational Noncrossing Partitions for all Coprime Pairs}


\author{\fnms{Michelle} \snm{Bodnar}\ead[label=e1]{mbodnar@ucsd.edu}}
\address{Department of Mathematics\\
University of California, San Diego\\
La Jolla, CA, 92093-0112, USA\\
\printead{e1}}

\runauthor{Michelle Bodnar}

\begin{abstract}
For coprime positive integers $a<b$, Armstrong, Rhoades, and Williams (2013) defined a set $NC(a,b)$ of rational noncrossing partitions, a subset of the ordinary noncrossing partitions of $\{1, \ldots, b-1\}$. Bodnar and Rhoades (2015) confirmed their conjecture that $NC(a,b)$ is closed under rotation and proved an instance of the cyclic sieving phenomenon for this rotation action.  We give a definition of $NC(a,b)$ which works for all coprime $a$ and $b$ and prove closure under rotation and cyclic sieving in this more general setting. We also generalize noncrossing parking functions to all coprime $a$ and $b$, and provide a character formula for the action of $\mathfrak{S}_a \times \Z_{b-1}$ on $\mathsf{Park}^{NC}(a,b)$. 
\end{abstract}



\end{frontmatter}

\section{Introduction}
Let $W$ be a Weyl group with root lattice $Q$, degrees $d_1, d_2, \ldots, d_\ell$, and Coxeter number $h = d_\ell$.  Then $W$ acts on the ``finite torus'' $Q/(h+1)Q$.  Cosets in $Q/(h+1)Q$ give a model for parking functions attached to $W$ \cite{ParkingSpaces}.  It has been shown by Haiman \cite{Haiman} that the number of orbits of this action is given by
\[ \cat(W) := \prod_i \frac{h + d_i}{d_i}, \]
which has come to be known as the Coxeter-Catalan number of $W$.  More generally, if $p$ is a positive integer which is coprime to the Coxeter number $h$, Haiman \cite{Haiman} showed that the number of orbits in the action of $W$ on $Q/pQ$ is  
\[ \cat(W,p) = \prod_i \frac{p + d_i - 1}{d_i}.\]
This number has come to be known as the \emph{rational Catalan number} of $W$ at parameter $p$.  

On the level of Weyl groups the Catalan and Fuss-Catalan objects, obtained by taking $p = h + 1$ and $mh + 1$ respectively, have been defined and studied \cite{memoirs}. When $W = \symm_a$ is the symmetric group, we have
\[\cat(\symm_a, a+1) = \frac{1}{a+1}{2a \choose a} = \cat(a)\]
where $\cat(a)$ is the classical Catalan number, famously counting noncrossing partitions, Dyck paths, well-paired parentheses, as well as hundreds of other combinatorial objects.  Furthermore, we have
\[ \cat(\symm_a, ka + 1) = \frac{1}{ka + a + 1} {ka + a + 1 \choose a}  = \cat^{(k)}(a)\]
where $\cat^{(k)}(a)$ is the Fuss-Catalan number, counting generalizations of Catalan objects such as noncrossing partitions whose block sizes are all divisible by $k$.  However, it wasn't until 2013 that Armstrong et. al \cite{RatAss, ArmPark} undertook a systematic study of type A rational Catalan combinatorics.

For coprime positive integers $a$ and $b$, the \emph{rational Catalan number} is 
\[ \cat(\symm_a, b) = \frac{1}{a+b} {a + b \choose a,b} = \cat(a,b).\]
Observe that $\cat(n, n+1) = \cat(n)$, so that rational Catalan numbers are indeed a generalization of the classical Catalan numbers.  The program of rational Catalan combinatorics seeks to generalize Catalan objects such as Dyck paths, the associahedron, noncrossing perfect matchings, and noncrossing partitions (each counted by the classical Catalan numbers) to the rational setting. For instance, $\cat(a,b)$ counts the number of \emph{$a,b$-Dyck paths}, NE-lattice paths from the origin to $(b,a)$ staying above the line $y = \frac{a}{b}x$.  

For coprime parameters $a < b$, Armstrong et. al \cite{RatAss} defined the $a,b$-\emph{noncrossing partitions}, $NC(a,b)$, to be a subset of the collection of noncrossing partitions of $[b-1]$ arising from a laser construction involving rational Dyck paths.  A characterization of these rational noncrossing partitions was given in \cite{RatCat}, where it was shown that $NC(a,b)$ is closed under dihedral symmetries and that the action of rotation on $NC(a,b)$ exhibits a cyclic sieving phenomenon.  Additionally, a model for $a,b$-noncrossing parking functions was given which carries an $\symm_a \times \Z_{b-1}$ action, and a character formula was stated and proved. However, this rational generalization and others rely on the fact that $a < b$. 

It is of intrinsic combinatorial interest to see whether such results hold in the case where $a > b$.  Moreover, this seems reasonable as Haiman's formula holds for any coprime pair $a,b$.  Furthermore, we are motivated by the favorable representation theoretic properties of the rational Cherednik algebra attached to the symmetric group $\symm_a$ at parameter $b/a$.  Such properties persist even when $a > b$.  It is thus desirable to remove the condition $a < b$ and define rational noncrossing partitions for all coprime pairs $(a,b)$. This paper provides the first type $A$ combinatorial model for rational Catalan objects defined for all coprime $a$ and $b$, along with proofs of generalizations of many of the results known for the $a < b$ case.

The rest of the paper is organized as follows:  \textbf{Section 2} begins with background information on rational Dyck paths and noncrossing partitions.  In \textbf{Section 3} we give an interpretation of $a,b$-Dyck paths in terms of pairs of labeled noncrossing partitions which generalizes the laser construction introduced in \cite{RatAss} and prove some basic properties of $NC(a,b)$.  In \textbf{Section 4} we show that $NC(a,b)$ is closed under a suitable rotation action.  We then provide a notion of block rank and show that block rank commutes with rotation.  We also provide a characterization of when a given labeled pair of noncrossing partitions is an element of $NC(a,b)$ and show that this set is closed under a suitable reflection action.  \textbf{Section 5} introduces $d$-modified rank sequences and goes through a series of lemmas which ultimately allow us to count the number of elements in $NC(a,b)$ which are invariant under $d$-fold rotation. In \textbf{Section 6}, we prove various refinements of cyclic sieving results for $NC(a,b)$.  A step in this direction has already been made by Thiel in \cite{Marko} where cyclic sieving was shown in the case $(a,b) = (n+1, n)$ by considering objects called noncrossing $(1,2)$-configurations.  We will revisit this construction and give a bijection between our $n+1,n$-noncrossing partitions and these $(1,2)$ configurations.  In \textbf{Section 7}, we generalize $a,b$-noncrossing parking functions, $\Park^{NC}(a,b)$, to all coprime $a$ and $b$ and prove a character formula for the action of $\symm_a \times \Z_{b-1}$ on $\Park^{NC}(a,b)$.  Finally, \textbf{Section 8} offers a possible direction for future research. 

\section{Background}

\subsection{Rational Dyck Paths}

Let $a$ and $b$ be coprime positive integers.  An $a,b$-\emph{Dyck path} $D$ is a lattice path in $\Z^2$ consisting of unit length north and east steps which starts at $(0,0)$, ends at $(b,a)$, and stays above the line $y = \frac{a}{b}x$.  By coprimality, the path will never touch this line. For example, consider the 7,4-Dyck path $NNNENENNENE$ shown in Figure \ref{vertRun}.  
\begin{figure}[h]
\includegraphics[scale=.4]{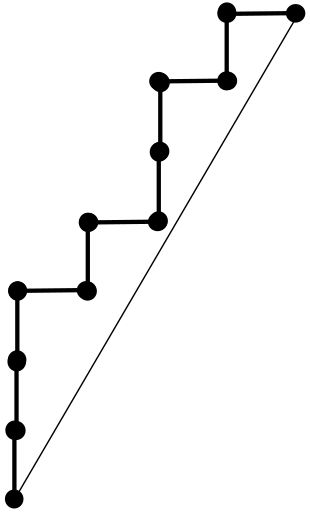}
\caption{7,4-Dyck Path NNNENENNENE with the line $y = \frac{7}{4}x$}
\label{vertRun}
\end{figure}
The $a,b$-Dyck paths are counted by the \emph{rational Catalan number} $\Cat(a,b) = \frac{1}{a+b}{a + b \choose a}$. A \emph{vertical run} of $D$ is a maximal contiguous sequence of north steps.  The 7,4-Dyck path shown in Figure \ref{vertRun} has 4 vertical runs of lengths 3, 1, 2, and 1 respectively.   Note: it is possible for a vertical run to have length 0.  A \emph{valley} of $D$ is a lattice point $p$ on $D$ such that $p$ is immediately preceded by an east step and succeeded by a north step. Figure \ref{vertRun} has three valley points.  When $(a,b) = (n, n+1)$, rational Dyck paths are equivalent to classical Dyck paths, NE-lattice paths from $(0,0)$ to $(n,n)$ which stay weakly above the line $y=x$, and are counted by the classical Catalan numbers.

\subsection{Noncrossing Partitions}

 A set partition $\pi$ of $[n] := \{1, 2, \ldots, n\}$ is \emph{noncrossing} if its blocks do not cross when drawn on a disk whose boundary is labeled clockwise with the number 1, 2, $\ldots$, $n$.  Equivalently, $\pi$ is noncrossing if there do not exist $a < b < c < d$  such that $a$ and $c$ are in the same block $B$, and $b$ and $d$ are in the same block $B' \neq B$.  Let $NC(n)$ denote the set of noncrossing partitions of $[n]$. Such partitions are counted by the classical \emph{Catalan numbers}

\[ \cat(n) = \frac{1}{n+1} {2n \choose n} = \frac{1}{2n+1} {2n+1 \choose n} = |NC(n)|.\]
The \emph{rotation} operator $\rot$ acts on the set $NC(n)$ by the permutation 
\[\left( \begin{array}{ccccc} 1 & 2 & \cdots & n-1 & n \\ 2 & 3 & \cdots & n & 1 \end{array}\right).\]
  A \emph{labeled noncrossing partition} is a noncrossing partition with a nonnegative integer called a label attached to each block.  When we apply $\rot$ to a labeled noncrossing partition the elements of each block shift as in the unlabeled case, and blocks maintain their labels throughout the rotation. 

\section{Construction and Properties of $NC(a,b)$}

\subsection{Rational Pairs of Noncrossing Partitions}

A simple bijection maps classical Dyck paths to noncrossing partitions.  The same map, when $a < b$, sends $a,b$-Dyck paths to $a,b$-noncrossing partitions \cite{RatCat}.  We'll now define a more general version of this map, $\pi$, that makes sense for any $a,b$-Dyck path and use this map to define rational $a,b$-noncrossing partitions for any coprime $a$ and $b$.  Let $D$ be an $a,b$-Dyck path and label the east ends of the nonterminal east steps of $D$ from left to right with the numbers $1, 2, \ldots, b-1$.  Let $p$ be the label of a lattice point at the bottom of a north step of $D$.  The \emph{laser} $\ell(p)$ is the line segment of slope $\frac{a}{b}$ which fires northeast from $p$ and stops the next time it intersects $D$.  By coprimality, $\ell(p)$ terminates on the interior of an east step of $D$.  For instance, consider the 10,7-Dyck path shown on the left in Figure \ref{firstEx}.  We have that $\ell(3)$ hits $D$ on the interior of the east step whose west endpoint is labeled 5.  We define the \emph{laser set} $\ell(D)$ to be the set of pairs $(i,j)$ such that $D$ contains a laser starting at label $i$ and which terminates on an east step with west $x$-coordinate $j$. For the Dyck path in Figure \ref{firstEx} we have 
\[ \ell(D) = \{(1,1), (2,6), (3,5), (4,5), (6,6)\}.\]

Define a pair of labeled noncrossing partitions $\pi(D) = (P,Q)$ as follows:  fire lasers from all labeled points which are also at the bottom of a north step.  We define the partition $P$ by the \emph{visibility relation}
\[ i \underset{P}{\sim} j \mbox{ if and only if the labels $i$ and $j$ are not separated by laser fire.}\]
We make the convention that the label $i$ lies stricly below $\ell(i)$.  Label each block of $P$ by the length of the vertical run immediately preceding the minimal element of the block. We will refer to this as the rank of the block.  Call a vertical run a \emph{$P$-rise} if it has length greater than $\frac{a}{b}$.   We will now describe the creation of the blocks of $Q$, a genuinely new feature of this map.

We call a vertical run a \emph{$Q$-rise} if it has length which is less than $\frac{a}{b}$, including zero.  In the special case where $a<b$, there can only be $Q$-rises of length zero since $a/b < 1$.  In Figure \ref{firstEx}, the vertical runs with $x$-coordinates 0, 2, 3, and 4 are $P$-rises and with $x$-coordinates 1, 5, and 6 are $Q$-rises.  We define the partition $Q$ by the relation 
\[ i \underset{Q}{\sim} j \]
if and only if one of the following holds:
\begin{enumerate}
\item $\ell(i)$ and $\ell(j)$ hit the same east step immediately following a $Q$-rise
\item $(i,j) \in \ell(D)$
\item $(j,i) \in \ell(D)$.
\end{enumerate}
We label the blocks of $Q$ as follows:  If $B$ is a block of $Q$ and $i \in B$, then we label $B$ with the number of north steps beneath the west endpoint of the east step hit by $\ell(i)$.  If $i$ doesn't fire a laser, then we assign $B$ rank 0.  This is well-defined because different elements of a block of $Q$ always touch or fire a laser which hits the same east step.  As with $P$, we will call this block labeling the rank of the block.  There will often be blocks of rank 0, which we will call the \emph{trivial blocks} of $Q$. We will refer to blocks of $Q$ whose ranks are positive as \emph{nontrivial blocks}.  Let $\pi(D)$ denote the labeled pair $(P, Q)$ associated to $D$ under this construction.   

\begin{figure}[h]
\includegraphics[scale=.7]{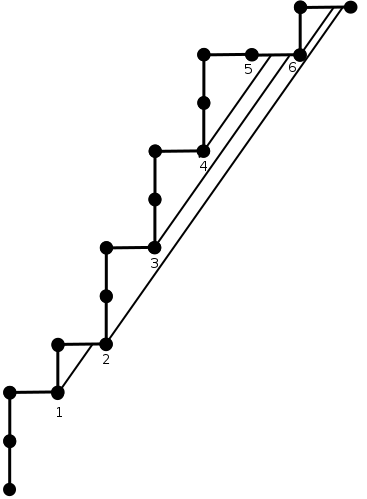}
\includegraphics[scale=.7]{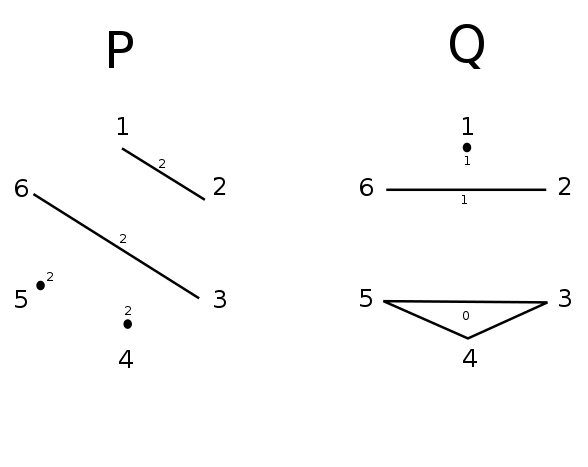}
\caption{A 10,7-Dyck path with corresponding pair of labeled noncrossing partitions}
\label{firstEx}
\end{figure}

Figure \ref{firstEx} shows a 10,7-Dyck path with labels and lasers drawn in.  The pair $(P,Q)$ which results, also shown in Figure \ref{firstEx}, is as follows:  
\[P = \{ \{1,2\}, \{3,6\}, \{4\},\{5\}\}\]
where each block has rank 2.
\[Q = \{ \{1\}, \{2,6\}, \{3,4,5\}\}\]
 with block ranks 1, 1, and 0 respectively.  In particular, the block $\{3,4,5\}$ is a trivial block of $Q$.  The ranks are written in smaller font near the lines indicating the block structure.  We will often omit the trivial blocks of $Q$ and simply write $Q = \{\{1\}, \{2,6\}\}$, each with rank 1.  

Each north step contributes to the rank of either a $P$ block or a $Q$ block, but not both.  In particular, the length of a $P$-rise is the rank of a block of $P$, and the length of a $Q$-rise is the rank of a block of $Q$.  This implies that the sum of the ranks of the $P$ and $Q$ blocks is $a$.  Note that elements in the same block of $Q$ are necessarily in different blocks of $P$, since elements in the same block of $Q$ are always separated by at least one laser.

When $a < b$, $Q$ contains only blocks of rank 0 and $P$ is the rational noncrossing partition associated to $D$ as described by the map in \cite{RatCat}.  The ranks of blocks are uniquely determined in this case by the structure of $P$, which is why labeling blocks by rank has not previously been considered.  When $a > b$, the ranks of a blocks are no longer uniquely determined by the structure of $P$ and $Q$.  For instance, the 5,3-Dyck paths $NNNENNEE$ and $NNENNNEE$ both give rise to $P = \{\{1\}, \{2\}\}$ and only trivial $Q$ blocks. Thus, the rank labels are a necessary feature of the construction of $\pi(D)$.  Since ranks tell us precise vertical run lengths, the map $\pi$ is injective. We are now ready to prove some useful properties of $a,b$-noncrossing partitions.

\begin{proposition}\label{minMax}
Let $(P,Q) = \pi(D)$ for an $a,b$ Dyck path $D$.  There cannot exist $1 \leq i < b-1$ such that $i$ is the maximal element of a block of $Q$ and $i+1$ is the minimal element of a block of $P$. 
\end{proposition}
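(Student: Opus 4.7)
My plan is to argue by contradiction. Suppose $1 \leq i < b-1$, with $i$ the maximum of a block of $Q$ and $i+1$ the minimum of a block of $P$; I will construct a label strictly greater than $i$ in the same $Q$-block as $i$.

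The first step would be to interpret the $P$-minimality hypothesis geometrically. The rank of the $P$-block with minimum $i+1$ is the length of the vertical run at $x$-coordinate $i$; since $P$-block ranks are supplied by $P$-rises, this vertical run must be a $P$-rise, so its length exceeds $a/b$. In particular east step $i$ is immediately followed by at least one north step, and therefore label $i$ fires a laser $\ell(i)$.

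Next I would pin down where $\ell(i)$ terminates. The laser leaves $(i, y_i)$ with slope $a/b$, so at $x = i+1$ it sits at height $y_i + a/b$; the $P$-rise bound gives $y_{i+1} > y_i + a/b$, so $\ell(i)$ passes strictly below east step $i+1$ and cannot terminate there. Since it must terminate on the interior of some east step of $D$, it terminates on east step $k$ for some $k$ with $i+2 \leq k \leq b$, and hence $(i, k-1) \in \ell(D)$.

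The concluding step is to apply condition (2) of the $Q$-relation: from $(i, k-1) \in \ell(D)$ we get $i \underset{Q}{\sim} k - 1$. The inequalities $i < k - 1 \leq b-1$ show that $k-1$ is a valid label strictly greater than $i$ lying in the $Q$-block of $i$, contradicting the maximality of $i$.

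The only subtle point is the first step — that the minimum of a $P$-block is preceded by a $P$-rise rather than a $Q$-rise. I expect this to follow from the counting observation that each of the $b$ vertical runs contributes its length to exactly one $P$- or $Q$-block rank, placing the $P$-rises in natural bijection with the $P$-blocks via the min-of-block correspondence.
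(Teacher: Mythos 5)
Your proof is correct and follows essentially the same route as the paper: both arguments hinge on the fact that the vertical run above label $i$ must be a $P$-rise (length exceeding $a/b$) because $i+1$ is the minimum of a $P$-block, and both then derive a contradiction with $i$ being the maximum of its $Q$-block. The only difference is cosmetic: the paper cites directly that a $Q$-block maximum sits at the bottom of a $Q$-rise of length less than $a/b$, whereas you unwind this by observing that the overshooting laser $\ell(i)$ places some label $k-1>i$ in the $Q$-block of $i$ via condition (2) of the $Q$-relation — a slightly more self-contained way of reaching the same contradiction.
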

\begin{proof}
If $i$ is the maximal element of a block of $Q$ then the lattice point labeled $i$ in $D$ is at the bottom of a $Q$-rise, whose the length is less than $a/b$.  On the other, hand if $i+1$ is also the minimal element of a block of $P$ then the lattice point labeled $i$ is at the bottom of a $P$-rise, whose length must be greater than $a/b$, a contradiction.  
\end{proof}

At this point it will be useful to introduce the \emph{Kreweras complement} of a noncrossing partition.  Let $P$ be a noncrossing partition.  The Kreweras complement, denoted $\krew(P)$, is computed as follows:  Begin by drawing the $2n$ labels $1, 1', 2, 2', \ldots, n, n'$ clockwise on the boundary of a disk.  Next, draw the blocks on $P$ on the unprimed vertices.  Then $\krew(P)$ is the unique coarsest partition of the primed vertices which introduces no crossings.  An example is shown in Figure \ref{krew}.  The map $\krew : NC(n) \to NC(n)$ satisfies $\krew^2 = \rot$, so $\krew$ is a bijection.   

\begin{figure}[h]
\includegraphics[scale=.4]{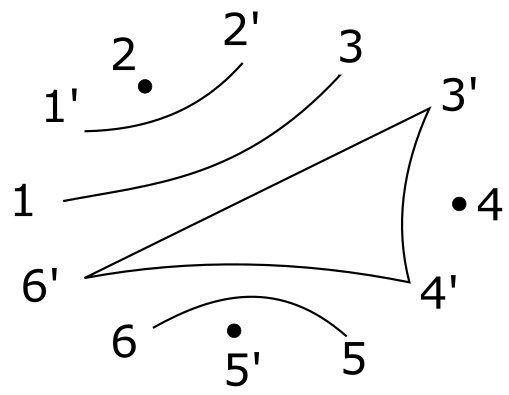}
\caption{The partition $P = \{\{1,3\},\{2\},\{4\},\{5,6\}\}$ is drawn on $\{1, 2, \ldots, 6\}$ and $\krew(P) = \{\{1,2\}, \{3, 4,6\}, \{5\}\}$ is drawn on the primed vertices}
\label{krew}
\end{figure}

By Lemma 3.2 in \cite{RatCat} we can recover the laser set of an $a,b$ noncrossing partition, where $a < b$, from its Kreweras complement. We have a similar result when we generalize to any coprime $a$ and $b$:

\begin{lemma}\label{lasers}
Let $a$ and $b$ be coprime and $(P,Q) = \pi(D)$ have corresponding Dyck path $D$.  If $\krew(P)$ is the Kreweras complement of $P$ then the laser set $\ell(D)$ is given by

\begin{align*}
 \ell(D) &= \{(i, \max(B)) | B \in \krew(P), i \in B, i \neq \max(B)\} \\
& \quad \cup \{ (\max(B), \max(B)) | B \in Q, \rank(B) \neq 0\}
\end{align*}

\end{lemma}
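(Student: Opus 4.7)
The plan is to partition $\ell(D)$ according to the kind of vertical run that produces each laser. Lasers are only fired from labels at the bottom of a positive-length vertical run, and every such run is either a $P$-rise (length strictly greater than $a/b$) or a $Q$-rise of positive length (integer length strictly less than $a/b$, possible only when $a>b$). I will match $Q$-rise lasers with the type 2 term and $P$-rise lasers with the type 1 term.

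For the type 2 half, let $B$ be a nontrivial block of $Q$ of rank $r>0$. By construction, every element of $B$ fires a laser hitting a common east step $E$ immediately following a $Q$-rise of length $r$; write $j$ for the west $x$-coordinate of $E$. Then the $Q$-rise sits at $x=j$ and label $j$ occupies its base $(j,y_j)$, so $\ell(j)$ reaches the height $y_j+r$ of $E$ at $x=j+rb/a\in(j,j+1)$ because $r<a/b$. Therefore $\ell(j)$ lands on $E$, giving $(j,j)\in\ell(D)$ and $j\in B$; since lasers travel strictly northeast, no $k\in B$ can exceed $j$, forcing $j=\max(B)$. Conversely, any self-laser in $\ell(D)$ must come from a positive-length $Q$-rise, since a $P$-rise laser would overshoot the east step immediately above its starting column; thus every self-laser is type 2.

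For the type 1 half, each non-self laser fires from a $P$-rise label $i$ and terminates on an east step with west $x$-coordinate $j>i$. My goal is to show that grouping such $i$'s by their shared destination recovers $\krew(P)$: for each east step $E$ hit by at least one non-self laser, writing $j$ for its west $x$-coordinate, the set $\{\,i<j:\ell(i)\text{ lands on }E\,\}\cup\{j\}$ is a block of $\krew(P)$ with maximum $j$, and every non-singleton block of $\krew(P)$ arises this way. This is precisely the content of Lemma 3.2 of \cite{RatCat} in the specialization $a<b$, and I would adapt the argument given there. The geometric content is that the partition induced by shared laser destination is noncrossing relative to $P$ and is as coarse as possible with this property, which characterizes $\krew(P)$.

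The main obstacle is justifying that this adaptation survives the presence of positive-length $Q$-rises. I expect this to be mild: a $Q$-rise self-laser lands on the east step directly above its own column and cannot separate any pair of labels to its east, so it leaves the visibility relation defining $P$, and hence the combinatorics of $\krew(P)$, unchanged. Granted this, the two halves combine: every laser in $\ell(D)$ is classified by whether its source is a $P$- or $Q$-rise, so the displayed union exhausts $\ell(D)$, and each element of either set has been shown to belong to $\ell(D)$, giving the claimed equality.
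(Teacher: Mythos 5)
Your proposal is correct and follows essentially the same route as the paper, which likewise splits $\ell(D)$ into the lasers that cut out blocks of $P$ (matched with $\krew(P)$ via Lemma 3.2 of \cite{RatCat}) and the self-lasers $(\max(B),\max(B))$ attached to nontrivial blocks of $Q$. Your slope computation showing that a positive-length $Q$-rise forces a self-laser landing on the immediately following east step, and that such self-lasers separate no labels and hence leave $P$ untouched, is a more detailed justification of what the paper asserts in two sentences.
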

\begin{proof}
The first set consists of all lasers which determine blocks of $P$.  The second set contains those additional lasers, unique to the the case $a > b$, which define nontrivial blocks of $Q$ but not $P$, which are always of the form $(p,p)$ where $p = \max(B)$ for some nontrivial block $B \in Q$. 
\end{proof}

\begin{lemma}\label{krewCom}
If $(P,Q) = \pi(D)$ for an $a,b$ Dyck path $D$ then, when viewed as unlabeled partitions, we have $Q = \krew(P)$.
\end{lemma}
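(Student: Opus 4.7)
The plan is to show that the equivalence relations $\sim_Q$ and $\sim_{\krew(P)}$ coincide on $[b-1]$, which gives the desired equality of unlabeled partitions. The main tool will be Lemma \ref{lasers}, which decomposes $\ell(D)$ as the disjoint union of pairs $(m, \max B)$ for $B \in \krew(P)$ and $m \in B \setminus \{\max B\}$, together with pairs $(p, p)$ where $p$ is the maximum of a nontrivial block of $Q$.

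For the direction $\krew(P) \Rightarrow Q$, I would fix a block $B \in \krew(P)$ with $\max B = k$. For each $m \in B$ with $m \neq k$, Lemma \ref{lasers} yields $(m, k) \in \ell(D)$, so $m \sim_Q k$ by condition (2) in the definition of $Q$; by transitivity, all of $B$ sits in a single $\sim_Q$-class. For the reverse direction, I would analyze each elementary relation generating $\sim_Q$. Conditions (2) and (3) produce a pair in $\ell(D)$, which via Lemma \ref{lasers} either directly places two elements in a common block of $\krew(P)$ or is the trivial pair $(p,p)$. Condition (1), namely $\ell(i)$ and $\ell(j)$ terminating on a common east step with west $x$-coordinate $k$, yields $(i,k), (j,k) \in \ell(D)$; a second appeal to Lemma \ref{lasers} then places $i$, $j$, $k$ in a common block of $\krew(P)$.

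The main subtlety is the degenerate case of condition (1) when $i = k$ (the case $j = k$ is symmetric): here $\ell(k)$ is a short laser emerging from a $Q$-rise, so $(k,k) \in \ell(D)$ arises from the second set of Lemma \ref{lasers}, with $k$ the maximum of a nontrivial $Q$-block rather than obviously belonging to a $\krew(P)$-block of size greater than one. The argument closes by noting that $\ell(k)$ is a single laser, hence $(k, k)$ is the unique element of $\ell(D)$ of the form $(k, \cdot)$; this forces $k$ to be the maximum of its $\krew(P)$-block $B$ (otherwise Lemma \ref{lasers} would list $(k, \max B) \neq (k,k)$ in the first set). Then $(j,k) \in \ell(D)$ lies in the first set of Lemma \ref{lasers}, placing $j \in B$, so $i = k$ and $j$ both lie in $B$ and $i \sim_{\krew(P)} j$, completing the verification.
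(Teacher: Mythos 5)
Your proof is correct and follows essentially the same route as the paper's: both inclusions are read off from the description of the laser set in Lemma \ref{lasers}, with the forward direction using the pairs $(m,\max B)$ and the reverse direction analyzing the three generating relations of $\sim_Q$. You are in fact slightly more careful than the paper, which simply asserts $i \neq j \neq k$ in the condition (1) case; your treatment of the degenerate case $i = k$ (forcing $k = \max B$ via uniqueness of the laser fired from $k$) fills in a detail the paper glosses over.
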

\begin{proof}
First suppose that $i$ and $j$ are in the same block $B$ of $\krew(P)$ where $i \neq j$.  If neither $i$ nor $j$ is equal to $\max(B)$ then by Lemma \ref{lasers} we must have that $(i,\max(B))$ and $(j,\max(B))$ are both lasers in $D$.  Since $\ell(i)$ and $\ell(j)$ hit the same east step, $i$ and $j$ are in the same block of $Q$.  Now suppose $j = \max(B)$.  Then $(i,j)$ is a laser in $D$.  Similarly, if $i = \max(B)$ then $(j,i) \in \ell(D)$. In all cases, $i$ and $j$ are in the same block of $Q$. 

Conversely, suppose $i$ and $j$ are in the same block of $Q$.  Let $B_i$ denote the block in $\krew(P)$ containing $i$ and $B_j$ denote the block in $\krew(P)$ containing $j$.  If $\ell(i)$ and $\ell(j)$ hit the same step immediately following a $Q$ rise above label $k$ then $(i,k)$ and $(j,k)$ are both lasers in $\ell(D)$ with $i \neq j \neq k$.  By the characterization of the laser set given in Lemma \ref{lasers}, we must have that $k = \max(B_i)$ and $k = \max(B_j)$, so $B_i = B_j$.  If $(i,j) \in \ell(D)$ then $j = \max(B_i)$.  If $(j,i) \in \ell(D)$ then $i = \max(B_j)$.  In all cases, $i$ and $j$ are in the same block of $\krew(P)$.  Thus, $Q = \krew(P)$. 
\end{proof}

\begin{proposition}
Given a Dyck path $D$, if $\pi(D) = (P, Q)$ then $Q$ is a noncrossing partition. 
\end{proposition}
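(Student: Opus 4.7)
The plan is to reduce the proposition to the preceding Lemma \ref{krewCom} together with the standard fact that the Kreweras complement preserves noncrossingness. Concretely, Lemma \ref{krewCom} tells us that, as unlabeled set partitions, $Q = \krew(P)$. Since $\krew$ is a bijection $NC(b-1) \to NC(b-1)$ (as recalled in the paragraph introducing $\krew$), once we know that $P$ is itself a noncrossing partition of $[b-1]$, we immediately conclude that $Q$ is noncrossing.

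Thus the only thing that needs to be verified is that $P$ is noncrossing. This is essentially the classical laser argument from \cite{RatAss, RatCat}, which carries over verbatim to the setting $a>b$ because the visibility relation that defines $P$ depends only on the configuration of lasers in the plane, not on the relative sizes of $a$ and $b$. I would argue it as follows: suppose for contradiction that $i_1 < i_2 < i_3 < i_4$ with $i_1 \sim_P i_3$ and $i_2 \sim_P i_4$ in different blocks. Since $i_1$ and $i_3$ are not separated by any laser, no laser fired from a labeled lattice point between them terminates above the labels $i_1, \dots, i_3$; in particular any laser fired from $i_2$ terminates at some east step lying weakly left of $i_3$, so by our convention placing each label strictly below its own laser the labels $i_2$ and $i_4$ are separated, contradicting $i_2 \sim_P i_4$. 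Hence $P \in NC(b-1)$.

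Combining these two facts gives the proposition: $P$ is noncrossing, so $\krew(P)$ is noncrossing, and $Q = \krew(P)$ by Lemma \ref{krewCom}, so $Q$ is noncrossing.

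The main obstacle is really just the bookkeeping of the previous paragraph: one must check that the classical ``lasers do not cross'' argument is not affected by the new phenomenon that some vertical runs are $Q$-rises. But $Q$-rises only contribute additional lasers of the form $(p,p)$, which touch the east step immediately after the $Q$-rise and therefore cannot separate a pair $i_1 < i_3$ that were previously unseparated; so the visibility relation defining $P$ still yields a noncrossing set partition, and the rest of the argument is a one-line invocation of Lemma \ref{krewCom} and the fact that $\krew$ preserves $NC(b-1)$.
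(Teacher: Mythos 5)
Your proof is correct and takes essentially the same route as the paper, which simply observes that $Q=\krew(P)$ by Lemma~\ref{krewCom} and that a Kreweras complement is noncrossing by definition. The extra verification that $P$ itself is noncrossing via the laser argument is sound diligence but is left implicit in the paper (and strictly speaking the Kreweras complement, being defined as the coarsest partition of the primed vertices introducing no crossings, is noncrossing by construction).
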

\begin{proof}
By the definition of Kreweras complement, $Q = \krew(P)$ is noncrossing.
\end{proof}

We say that two noncrossing partitions $P_1$ and $P_2$ of $\{1, 2, \ldots, n\}$ are \emph{mutually noncrossing} if there do not exist $a < b < c < d$ such that $a$ and $c$ are in the same block of $P_i$ and $b$ and $d$ are in the same block of $P_j$ for $i, j \in \{1, 2\}$ and $i \neq j$.  Equivalently, draw the numbers 1 through $n$ on the boundary of a disk. Then $P_1$ and $P_2$ are mutually noncrossing if when we draw the boundary of the convex hulls of the blocks of $P_1$ with solid lines and the convex hulls of the blocks of $P_2$ in dashed lines, no solid line crosses the interior of a dashed line.  Note that solid-dashed intersections at vertices are permissible.  For example, the picture on the left of Figure \ref{mutNonCross} contains two noncrossing partitions.  One whose blocks are indicated by solid lines, the other whose blocks are indicated by dashed lines.  We see that there are intersections only at labels. On the other hand, the picture on the right in Figure \ref{mutNonCross} shows that if we superimpose a rotated version of the dashed line partition onto the solid line partition, then the partitions are no longer mutually noncrossing. In particular, the $\{1,4\}$ block of dashed line partition crosses both the $\{2,6\}$ and $\{3,5\}$ blocks of the solid line partition. 

\begin{figure}[h]
\includegraphics[scale=.6]{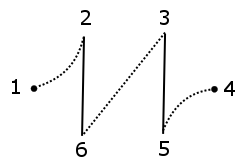} \hspace{1cm}
\includegraphics[scale=.6]{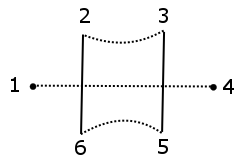}
\hspace{1cm}
\caption{The pair on the left is mutually noncrossing.  The pair on the right is not.}
\label{mutNonCross}
\end{figure}

\begin{proposition}\label{muNonCrossing}
Given a Dyck path $D$, if $\pi(D) = (P, Q)$ then $P$ and $Q$ are mutually noncrossing.
\end{proposition}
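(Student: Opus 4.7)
The plan is to reduce the statement to a general fact about the Kreweras complement, using Lemma~\ref{krewCom}. That lemma says $Q = \krew(P)$ as unlabeled noncrossing partitions, so it suffices to prove that any noncrossing partition $P$ of $[n]$ and its Kreweras complement $\krew(P)$ are always mutually noncrossing.

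To do this I would unpack the definition of $\krew(P)$: the $2n$ vertices $1, 1', 2, 2', \ldots, n, n'$ sit clockwise on a disk, $P$ is drawn on the unprimed vertices, and $\krew(P)$ is the coarsest noncrossing partition of the primed vertices such that the combined picture has no crossings. Thus on this interlaced disk the $P$-chords and $\krew(P)$-chords automatically do not cross, and since they live on disjoint vertex sets they also cannot coincide at a shared endpoint. So ``no crossings'' on the interlaced disk really is a clean chord-interleaving condition.

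I would then argue by contradiction. Suppose $P, Q$ fail to be mutually noncrossing on the single disk $[n]$, so there exist $a < b < c < d$ with $\{a,c\}$ in a block of one partition and $\{b,d\}$ in a block of the other. In the interlaced disk, $i$ sits at clockwise position $2i - 1$ and $i'$ at position $2i$. In the case $\{a,c\} \in P$, $\{b,d\} \in Q$, one has
\[ 2a - 1 < 2b < 2c - 1 < 2d, \]
so the $P$-chord $(a,c)$ and the $\krew(P)$-chord $(b', d')$ interleave cyclically and must cross; in the symmetric case one uses $2a < 2b - 1 < 2c < 2d - 1$ instead. Either way we contradict the defining property of $\krew$. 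The only real point of care is the bookkeeping of the interlaced positions; the rest is immediate from Lemma~\ref{krewCom}.
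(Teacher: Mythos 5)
Your proposal is correct and follows the same route as the paper, which simply observes that by Lemma~\ref{krewCom} we have $Q = \krew(P)$ and that a partition has no crossings with its Kreweras complement by definition. You merely spell out the interlaced-position bookkeeping that the paper leaves implicit, and that bookkeeping is accurate.
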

\begin{proof}
By definition, $Q = \krew(P)$ has no crossings with $P$. 
\end{proof}

It now makes sense to define the set $NC(a,b)$ of $(a,b)$ noncrossing partitions by
\[ NC(a,b) = \{ \pi(D) | D \mbox{ is an $a,b$-Dyck path}\}.\]  

\section{Rotation, Rank Sequences, and Reflection}

\subsection{The Rotation Operator}
Next, we will define a rotation operator $\rot'$ on $a,b$-Dyck paths that commutes with $\pi$.  In other words, if $\pi(D) = (P, Q)$, then $\pi(\rot'(D)) = \rot^{-1}(\pi(D))$ where $\rot$ is the map acting componentwise on $P$ and $Q$ sending $i$ to $i + 1$, modulo $b-1$, which preserves ranks. 

\begin{defn}
 Let $D = N^{i_1} E^{j_1} \cdots N^{i_m}E^{j_m}$ be the decomposition of $D$ into nonempty vertical and horizontal runs. We define the \emph{rotation operator} $\rot'$ as follows:

\begin{enumerate}
\item If $m = 1$, so that $D = N^aE^b$, we set
\[ \rot'(D) = N^aE^b = D.\]

\item If $m, j_1 > 1$, we set
\[ \rot'(D) = N^{i_1}E^{j_1 - 1} N^{i_2}E^{j_2} \cdots N^{i_m}E^{j_m+1}.\]

\item If $m > 1$ and $j_1 = 1$, let $P = (1, i_1)$ be the westernmost valley of $D$.  The laser $\ell(P)$ fired from $P$ hits $D$ on a horizontal run $E^{j_k}$ for some $2 < k < m$.  Suppose that $\ell(P)$ hits the horizontal run $E^{j_k}$ on step $r$, where $1 \leq r \leq j_k$.  There are two cases to consider: 

If $r = 1$, we set 
\[ \rot'(D) = N^{i_2}E^{j_2} \cdots N^{i_{k-1}}E^{j_{k-1}} N^{i_1}E^{j_k}N^{i_{k+1}}E^{j_{k+1}} \cdots N^{i_m}E^{j_m}N^{i_k}E.\]

If $r > 1$, we set 
\[ \rot'(D) = N^{i_2}E^{j_2} \cdots  N^{i_k}E^{r-1}N^{i_1}E^{j_k - r + 1}N^{i_{k+1}}E^{j_{k+1}} \cdots N^{i_m}E^{j_m + 1}.\]
\end{enumerate}
\end{defn}

This definition is consistent with, but more general than, the one given in Section 3.1 \cite{RatCat}.  The $r=1$ case in $(3)$ will never occur if $a<b$ but can if $a > b$, so this new definition is necessary.  The next proposition shows that $\rot'$ is the path analog of $\rot^{-1}$ on set partitions.  

\begin{proposition}\label{rotPreserve}
The operator $\rot'$ defined above gives a well-defined operator on the set of $a,b$-Dyck paths.  Furthermore, for any Dyck path $D$, if $\pi(D) = (P,Q)$, then $\pi(\rot'(D)) = \rot^{-1}(\pi(D))$.
\end{proposition}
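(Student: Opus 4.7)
The plan is to verify the proposition by case analysis matching the three branches of the definition of $\rot'$. First I would address well-definedness. Case 1 is immediate. In Case 2, each lattice point $(x, y)$ of $D$ with $x \geq 1$ shifts to $(x - 1, y)$ in $\rot'(D)$; since $y > (a/b) x$ implies $y > (a/b)(x-1)$, the path remains strictly above the diagonal, and the endpoint $(b, a)$ is restored by the appended east step. In Case 3, the key geometric input is that the laser $\ell(P)$ from the westernmost valley $P = (1, i_1)$ stays weakly above $D$, so relocating the vertical run $N^{i_1}$ to the lattice point where $\ell(P)$ meets the $k$-th horizontal run preserves the Dyck condition in both subcases $r = 1$ and $r \geq 2$.

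To prove the identity $\pi(\rot'(D)) = \rot^{-1}(\pi(D))$, I would invoke Lemma \ref{lasers} together with Lemma \ref{krewCom}. These reduce the claim to showing (i) that under the appropriate bijection between labels of $D$ and labels of $\rot'(D)$, the laser set $\ell(\rot'(D))$ is the image of $\ell(D)$ under the relabeling $k \mapsto k - 1$ (with label $1$ sent to label $b - 1$), and (ii) that vertical-rise lengths at corresponding lattice points agree, so that block ranks are preserved.

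Case 1 is vacuous. For Case 2, label $k$ in $\rot'(D)$ occupies the former position of label $k + 1$ in $D$ for $k = 1, \ldots, b - 2$, while label $b - 1$ in $\rot'(D)$ lies at the east end of the appended east step at height $a$; both this label and label $1$ in $D$ are at the bottom of a zero-length vertical run and hence lie in trivial blocks of $Q$. Each laser $(i, j) \in \ell(D)$ with $i \geq 2$ translates to $(i - 1, j - 1) \in \ell(\rot'(D))$, and vertical-run lengths match at corresponding lattice points, so the two labeled partition pairs agree as required.

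The main obstacle is Case 3, particularly the subcase $r = 1$, which requires $a > b$ and does not arise in the framework of \cite{RatCat}. For $r \geq 2$ the inserted $N^{i_1}$ splits the $k$-th horizontal run at an interior lattice point, and the argument parallels Case 2 after rereading the label shift through the teleportation. For $r = 1$ the laser from $P$ lands at a lattice endpoint and the vertical run $N^{i_k}$ must itself be relocated to the end of $\rot'(D)$; I would handle this by applying Lemma \ref{lasers} element-by-element to match $\ell(\rot'(D))$ with the shifted image of $\ell(D)$, paying special attention to the fact that label $1$ in $D$ is in this case the maximum of a nontrivial $Q$-block, and verifying through Lemma \ref{krewCom} that the corresponding $Q$-block in $\rot'(D)$ has label $b - 1$ as its maximum with the same rank.
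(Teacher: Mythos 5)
Your overall strategy coincides with the paper's: verify well-definedness geometrically (only the $r=1$ branch of Case 3 is genuinely new relative to the $a<b$ setting of \cite{RatCat}), then establish $\pi(\rot'(D)) = \rot^{-1}(\pi(D))$ by tracking how the lasers transform, with ranks read off vertical-run lengths. Packaging the laser-tracking through Lemmas \ref{lasers} and \ref{krewCom} is legitimate, since both precede this proposition, and your treatment of Cases 1 and 2 and of well-definedness is sound.

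There is, however, a concrete error at exactly the point you single out as the main obstacle. In the $r=1$ subcase you assert that label $1$ in $D$ is the \emph{maximum} of a nontrivial $Q$-block. In general it is not. Here $\ell(1)$ terminates on the first east step of $E^{j_k}$, whose west endpoint sits atop the $Q$-rise $N^{i_k}$; writing $t$ for the label at the base of that run, we have $(1,t)\in\ell(D)$, so relation (2) in the definition of $Q$ puts $1$ and $t$ in the same block, and since any laser terminating on that east step must originate at a label at most $t$, the maximum of this block is $t>1$ (consistent with Lemma \ref{lasers}, which exhibits $1$ as a non-maximal element of the block of $\krew(P)$ with maximum $t$). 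The distinction is not cosmetic: $\rank_Q$ is read off the vertical run above the block's \emph{maximum}, so the rank of this block is $i_k$, the length of the run above $t$, and the verification required is that in $\rot'(D)$ the labels $t-1$ and $b-1$ land in a common $Q$-block with maximum $b-1$ and rank $i_k$ --- which holds precisely because $\rot'$ relocates $N^{i_k}$ (not only $N^{i_1}$) to the position just before the terminal east step, forcing $\ell(t-1)$ and $\ell(b-1)$ to terminate on that step. This is the content of the fourth bullet of the paper's proof. As written, your plan would check the rank against the wrong vertical run and would miss why the relocation of $N^{i_k}$ to the end of the path is the essential feature of the $r=1$ case.
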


\begin{proof}
First we must check that for any $a,b$-Dyck path $D$, $\rot(D)$ does in fact stay above the line $y = \frac{a}{b}x$.  The definition of this rotation operator differs from the one given in Section 3.1 of \cite{RatCat} for $a < b$ only in the first case in $(3)$, so that is the only case we need to consider here.  It is easiest to explain what happens visually.  In Figure \ref{rot} we break the generic Dyck path at the diagonal slashes into 5 pieces.  The segment labeled 1 is the initial vertical run.  Segment 2 is the single east step which follows.  Segment 3 is the portion of the path between segment 2 and the $Q$-rise preceeding the east step hit by $\ell(1)$. Segment 4 is the aforementioned $Q$-rise.  Segment 5 is the remainder of the path.  The labeled path on the right shows how the inverse rotation operator shifts these segments. 

Since segment 3 stays above a laser fired in $D$, segment 3 in $\rot'(D)$ must stay above the line $y = \frac{a}{b}x$.  Since the segment 4 is a $Q$-rise in $D$, we know that the segment 4 of $\rot'(D)$ has length at most $\lfloor a/b \rfloor$, so the segments 4 and 2 of $\rot'(D)$ stay above the line $y = \frac{a}{b}x$. Since segment 5 stays above the line in $D$, it is clear that it stays above the line in $\rot'(D)$ as well.  Finally, since segment 1 is a single vertical run, it cannot cross the line.  Thus, the path $\rot'(D)$ stays above the line $y = \frac{a}{b}x$ so it is a valid Dyck path.
\begin{figure}[h]
\includegraphics[scale=.5]{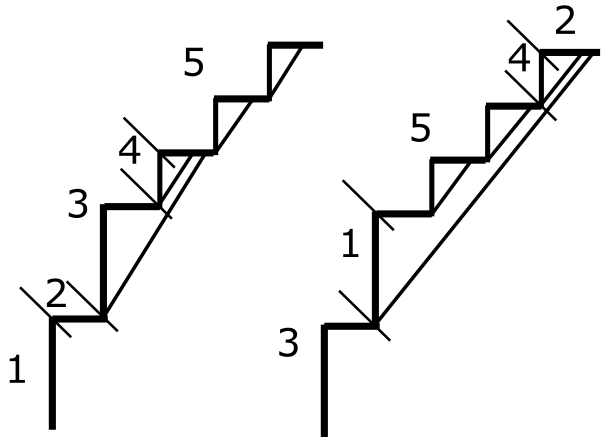}
\caption{The Dyck path on the right is the rotated version of the path on the left}
\label{rot}
\end{figure}
Next we need to argue that $\pi(\rot'(D)) = \rot^{-1}(\pi(D))$. To do this, we simply consider how the lasers change from $D$ to $\rot'(D)$.
\begin{enumerate}
\item The lasers fired from points in segment 5 of $D$ are identical to the lasers fired in segment 5 of $\rot'(D)$, shifted one unit west.
\item The lasers fired within segment 3 which hit just west of a label $s$ in $D$ hit just left of the label $s-1$ in $\rot'(D)$. 
\item The laser from the point labeled 1 in $D$ is replaced by the laser fired from the end of segment 3 in $\rot'(D)$, so the rotated block includes $b-1$ instead in the rotation as desired.
\item Let $t$ be the label at the base of segment 4 in $D$.  Then $t$ and $1$ are in the same block of $Q$ in $\pi(D)$.  In $\rot'(D)$, this laser is fired from $t-1$, and as described in $(3)$ it hits the terminal east step.  Since segment 4 is translated to be the vertical run immediately preceding the terminal east step, the laser fired from $b-1$ in $\rot'(D)$ also hits the terminal east step, so $t-1$ and $b-1$ are in the same block of $Q$ in $\pi(\rot'(D))$, completing the proof that the blocks of $\pi(\rot'(D))$ rotate as desired.
\end{enumerate}
\end{proof}

It now makes sense to define $\rot(D) = \rot'^{-1}(D)$. In other words, $\rot(D)$ is such that $\pi(\rot(D)) = (\rot(P), \rot(Q))$. 

Given an $a,b$-Dyck path $D$, one can obtain a $b,a$-Dyck path $\tau(D)$ by applying the transposition operator $\tau$ which reflects a path about the line $y = -x$, then shifts it such that its southern-most point is at the origin. One might hope that transposition would commute with rotation in the sense that $\tau(\rot(D)) = \rot(\tau(D))$; however, this is not the case, which can be seen immediately from an example.  Let $D = NNNNENENNE$. If we first transpose, we obtain the path $NEENENEEEE$ which corresponds to the partition $A = \{\{1,2\}, \{3,6\}, \{4,5\}\}$.  However, if we first rotate $D$, then transpose, we obtain the partition $B = \{\{1,6\}, \{2,3\}, \{4,5\}\}$, which is not obtainable from $A$ via any rotation.  Since the relevant information of a noncrossing partition is read off from the vertical runs of its associated Dyck path rather than the horizontal runs, which are not preserved under rotation, this is not surprising.  

\subsection{Rank Sequences}

Let $D$ be a Dyck path such that $\pi(D)$ is the labeled pair of noncrossing partitions $(P,Q)$.  If $B$ is a block of $P$, we define $\rank^D_P(B)$ to be the length of the vertical run preceding $\min(B)$ in $D$.  If $B$ is a block of $Q$, we define $\rank^D_Q(B)$ to be the length of the vertical run above $\max(B)$ in $D$.   Since the underlying Dyck path $D$ is almost always clear from context, we will often simply write $\rank_P(B)$ and $\rank_Q(B)$.  Given an $a,b$-Dyck path $D$ such that $\pi(D) = (P,Q) \in NC(a,b)$, we define the associated \emph{$P$ and $Q$ rank sequences}, denoted $S_P$ and $S_Q$ as follows:
\[ S_P := (p_1, p_2, \ldots, p_{b-1})\]
where
\[ p_i = \begin{cases} \rank_P(B)  \mbox{ if } i = \min(B) \mbox{ for some $B \in P$} \\ 0  \mbox{ otherwise. } \end{cases} \]

\[ S_Q := (q_1, q_2, \ldots, q_{b-1})\]
where
\[q_i = \begin{cases} \rank_Q(B)  \mbox{ if } i = \max(B) \mbox{ for some $B \in Q$} \\ 0  \mbox{ otherwise. } \end{cases} \]
To solidify the connection to Dyck paths, observe that given $(P,Q) \in NC(a,b)$ we have $\pi^{-1}(P,Q) = D$ where 
\[D = N^{p_1}EN^{\max(p_2,q_1)}E \cdots N^{\max(p_{b-1}, q_{b-2})}EN^{q_{b-1}}E.\]
More generally, we will simply define the \emph{rank sequence} of $(P,Q)$ to be the sequence given by
\[ R(P,Q) := (p_1, \max(p_2,q_1), \cdots, \max(p_{b-1},q_{b-2}), q_{b-1}).\]
This is precisely the sequence of vertical run lenghts of the Dyck path which gives rise to $(P,Q)$. For example, consider the path and corresponding partitions shown in Figure \ref{nonC}:

\begin{figure}[h]
\includegraphics[scale=.3]{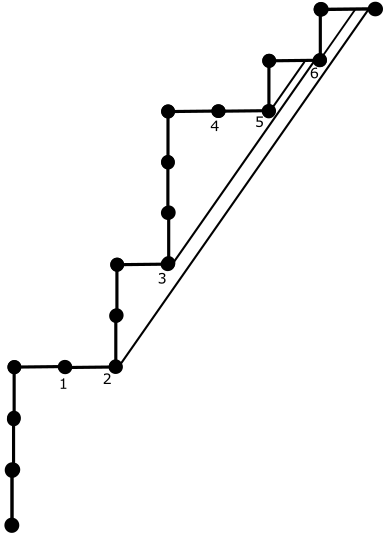}
\includegraphics[scale=.4]{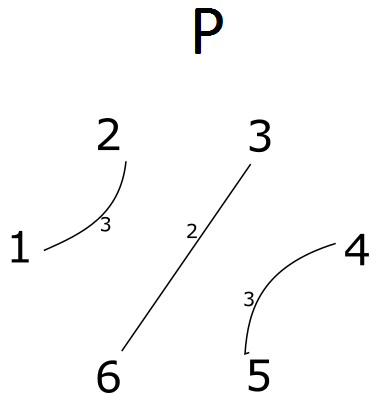}
\hspace{1cm}
\includegraphics[scale=.4]{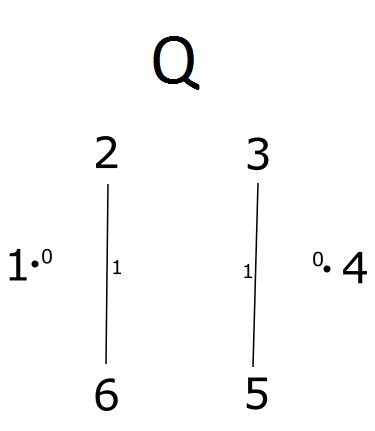}
\caption{A 10,7-Dyck path with corresponding labeled partitions}
\label{nonC}
\end{figure}

We have $S_P = (3,0,2,3,0,0)$, $S_Q = (0,0,0,0,1,1)$, and $R(P,Q) = (3, 0, 2, 3, 0,1,1)$.

\begin{proposition}\label{rankRot}
Let $a$ and $b$ be coprime, $D$ be an $a,b$-Dyck path, and $\pi(D) = (P,Q) \in NC(a,b)$. If $B$ is a block of $P$, then
\[ \rank^D_P(B) = \rank^{\rot(D)}_{\rot(P)}(\rot(B)).\]
If $B$ is a block of $Q$, then
\[ \rank^D_Q(B) = \rank^{\rot(D)}_{\rot(Q)}(\rot(B)).\]
\end{proposition}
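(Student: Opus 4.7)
The strategy is to reduce the statement to one about $\rot'$ and perform a case analysis matching the proof of Proposition \ref{rotPreserve}. Since $\rot = (\rot')^{-1}$, it suffices to show that for any $a,b$-Dyck path $D$ with $\pi(D) = (P,Q)$, and any $B \in P$, we have $\rank_P^D(B) = \rank_{\rot^{-1}(P)}^{\rot'(D)}(\rot^{-1}(B))$, and similarly for $B \in Q$. The key observation is that the rank of a block is recorded by the length of a specific vertical run of $D$: the run immediately preceding $\min(B)$ for $B \in P$, and the run above $\max(B)$ for $B \in Q$. Thus it suffices to track how vertical runs of $D$ are repositioned by $\rot'$ and to confirm that each run in $\rot'(D)$ is attached to the rotated version of the block that owned it in $D$.

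Cases 1 and 2 of the definition of $\rot'$ are essentially bookkeeping. Case 1 ($D = N^a E^b$) is trivial, as $\rot'$ fixes $D$ and rotation fixes the sole block. In Case 2 ($j_1 > 1$), the vertical runs of $\rot'(D)$ carry the same lengths as those of $D$, with $N^{i_1}$ remaining at $x = 0$ and each subsequent $N^{i_l}$ shifting one step west. For any block whose attached vertical run is not the initial one, the distinguished label ($\min$ for $P$, $\max$ for $Q$) shifts by $-1$, agreeing with $\rot^{-1}$, so ranks match. For the initial run we observe that labels $1$ and $2$ both lie at consecutive east-step ends at the top of $N^{i_1}$; neither they nor any label between them fires a laser (each is immediately followed by an east step), and any laser fired from a later label starts at height $\geq i_1$ and hence cannot dip below to separate them. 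Hence labels $1$ and $2$ lie in the same $P$-block, whose image under $\rot^{-1}$ still contains $1$ (via $2 \mapsto 1$) and so has minimum $1$, with rank $i_1$.

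Case 3 ($j_1 = 1$) is the essential case and includes the new sub-case $r = 1$ that does not arise when $a < b$. Here the initial vertical run $N^{i_1}$ is relocated eastward, inserted just before or within the horizontal run $E^{j_k}$ that $\ell(1)$ hit, while $N^{i_k}$ is pushed to become the terminal vertical run. The proof of Proposition \ref{rotPreserve} already tracks the block-level reorganization: the $P$-block attached to $N^{i_1}$ in $D$ rotates to a block whose minimum equals the label immediately past the point where $\ell(1)$ landed, which is precisely the $x$-coordinate at which $N^{i_1}$ has been re-inserted in $\rot'(D)$; and the $Q$-block attached to $N^{i_k}$ rotates to a block whose maximum corresponds to the new terminal position of $N^{i_k}$. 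All remaining vertical runs shift by $-1$ in $x$-coordinate as in Case 2. Since the formulas defining $\rot'(D)$ preserve the lengths of $N^{i_1}$, $N^{i_k}$, and all other vertical runs, rank preservation for both $P$- and $Q$-blocks follows.

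The principal obstacle is the detailed bookkeeping in Case 3, particularly the sub-case $r = 1$ in which $N^{i_k}$ migrates from its interior position to the end of the path. However, once the block-level rotation is established as in Proposition \ref{rotPreserve}, and since the lengths of all vertical runs appear explicitly and unchanged in the defining formulas for $\rot'(D)$, the rank preservation statement reduces to matching each displaced vertical run with the rotated block it is now attached to, which is a direct verification from the case-by-case description of $\rot'$.
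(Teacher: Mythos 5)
Your proposal is correct and follows essentially the same route as the paper: reduce to the inverse operator $\rot'$, observe that vertical run lengths are preserved and that ranks are read off from the run attached to a distinguished label of each block, and then check that the only nontrivial situation is the wrap-around (the block involving the label $1$), where the relocated initial run and the relocated $Q$-rise land exactly where the rotated block expects them, using Proposition \ref{rotPreserve} for the block-level correspondence. Your write-up is somewhat more explicit in tracing the cases of the definition of $\rot'$, but the argument is the same.
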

\begin{proof}
It will suffice to consider instead the inverse rotation operator $\rot'$ defined for $a,b$-Dyck paths.  This operator preserves vertical run lengths and the underlying block structure of both $P$ and $Q$.  Preservation of rank is clear unless $B$ contains 1, since $\rot'$ just subtracts 1 from every index modulo $b-1$.  If $B$ is in $P$ and contains 1, then by definition of $\rot'$, we translate the entire initial vertical run sequence so it immediately precedes the next element in $B$, after $\rot'$ is applied, so the rank is preserved.  If $B$ is in $Q$ and contains 1, then the $Q$-rise preceding the maximal element in $B$ is translated to the vertical run preceding the terminal east step in the path.  Thus, the $\rank^D_Q(B) = \rank^{\rot(D)}_{\rot(Q)}(B')$ where $B'$ is the block in $\rot(Q)$ coming from $\rot(D)$ which contains $b-1$.  By Proposition \ref{rotPreserve}, we have $B = B'$, so the rank is again preserved.
\end{proof}

Now we show how block ranks respect cardinality under the operation of merging blocks.  In the case where $a < b$, there are no nontrivial $Q$ blocks and merging $P$ blocks of $(P,Q) \in NC(a,b)$ always yields another $a,b$-noncrossing partition.  When $a > b$, the merging of blocks of $P$ results in the splitting of blocks of $Q$, and we need to be careful about how we assign ranks to these split blocks.  This is made precise in the following proposition.  An example follows the end of the proof, which will help clarify the merging operation defined below.

\begin{lemma}\label{pMerge}
Let $a$ and $b$ be coprime positive integers and $D$ be an $a,b$-Dyck path such that $\pi(D) = (P,Q) \in NC(a,b)$, and $B$ and $B'$ be two blocks of $P$.   Let $P'$ be the result of replacing $B$ and $B'$ in $P$ by $B \cup B'$.  If $P'$ is a noncrossing partition, then $(P', Q') \in NC(a,b)$ where $Q' = \krew(P')$ and $\rank_{P'}(B \cup B') = \rank_P(B) + \rank_P(B')$.  For any block $C' \in Q'$, if $\max(C') = \max(C)$ for some $C \in Q$, then $\rank_{Q'}(C') = \rank_Q(C)$.  Otherwise $\rank_{Q'}(C') = 0$. 
\end{lemma}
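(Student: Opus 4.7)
The plan is to realize $(P', Q')$ as $\pi(D')$ for an explicit $a,b$-Dyck path $D'$ obtained by surgery on $D$. Assume without loss of generality that $\min(B) < \min(B')$. Let $D'$ be the lattice path whose vertical-run-length sequence agrees with that of $D$ everywhere except that the length of the $\min(B)$-th vertical run is increased by $\rank_P(B')$ and the length of the $\min(B')$-th vertical run is set to $0$. Geometrically, $D'$ is obtained by detaching the $P$-rise of length $\rank_P(B')$ at column $\min(B')$ and stacking it on top of the existing $P$-rise at column $\min(B)$. Since north steps are only shifted westward, the height of $D'$ at every east-coordinate weakly exceeds that of $D$, so $D'$ stays strictly above $y = (a/b)x$ (using coprimality) and is a valid $a,b$-Dyck path.

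Next I would verify that $\pi(D') = (P', Q')$ with the prescribed ranks. The $P$-rises of $D'$ are the $P$-rises of $D$ with the one at $\min(B')$ removed and the one at $\min(B)$ enlarged, so the block minima of the $P$-component of $\pi(D')$ coincide with the block minima of $P'$, and the rank of the merged block is $\rank_P(B) + \rank_P(B')$. To pin down the block structure itself I would trace how lasers change: lasers fired from columns other than $\min(B)$ are largely unaffected, while the laser from $\min(B)$ in $D'$ now emanates at a greater height and hence terminates no further west than in $D$. Invoking the hypothesis that $P'$ is noncrossing, a case analysis of where each laser lands shows the visibility relation on $D'$ produces exactly $P'$. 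For the $Q$-side, Lemma \ref{krewCom} yields unlabeled equality with $\krew(P') = Q'$ automatically. For the $Q'$-ranks, note that for any $C' \in Q'$ with $m = \max(C')$, $\rank_{Q'}(C')$ equals the length of the $(m+1)$-th vertical run of $D'$. This differs from the corresponding run in $D$ only if $m+1 \in \{\min(B), \min(B')\}$: Proposition \ref{minMax} applied to $(P', Q')$ rules out $m+1 = \min(B)$, and when $m+1 = \min(B')$, Proposition \ref{minMax} applied to $(P, Q)$ forces $m$ not to be a $Q$-block max in $(P,Q)$, so the new run length is $0$, matching the prescription. In the remaining case $m+1 \notin \{\min(B), \min(B')\}$, the run length is unchanged from $D$; whether $m$ was a $Q$-block max in $(P,Q)$ splits the two subcases of the prescription exactly as stated.

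The main obstacle is the laser analysis in the preceding paragraph: although the set of block minima and the rank data fall out of the surgery almost for free, one must verify that the visibility relation on $D'$ does not inadvertently merge additional blocks of $P$ beyond $B$ and $B'$, nor split blocks that should remain intact. This is precisely where the noncrossing hypothesis on $P'$ is used — it rules out any block of $P$ straddling $B$ and $B'$ in a way that would interfere with the visibility classes in $D'$, and thereby pins down the block structure uniquely as $P'$.
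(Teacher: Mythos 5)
Your surgery on $D$ is exactly the paper's: delete the $P$-rise of length $\rank_P(B')$ atop $\min(B')-1$ and stack it on the run atop $\min(B)-1$, observe that heights only increase so $D'$ is a valid Dyck path, and then read off the $Q'$-ranks from the (mostly unchanged) vertical runs — your treatment of that last part via Proposition \ref{minMax} is fine, indeed more explicit than the paper's. But the heart of the lemma is the claim you defer in your second paragraph: that the visibility classes of $D'$ are exactly the blocks of $P'$, i.e.\ that the surgery merges $B$ with $B'$ and disturbs nothing else. Writing ``a case analysis of where each laser lands shows the visibility relation on $D'$ produces exactly $P'$'' is not that case analysis, and the one concrete assertion you do make about it is off: the laser that matters is $\ell(\min(B)-1)$, not a laser ``from $\min(B)$,'' and its emanation point does \emph{not} rise (all heights at $x$-coordinates $\le \min(B)-1$ are unchanged); what changes is that the obstructing laser $\ell(\min(B')-1)$ disappears.

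Concretely, the missing argument (which is the bulk of the paper's proof) is this. The only lasers that can change are $\ell(p)$ for $\min(B)-1\le p\le \min(B')-1$. For $p$ strictly between, if $\ell(p)$ terminates east of $\min(B')-1$ one must show that the horizontal distance between the termination points of $\ell(p)$ and $\ell(\min(B')-1)$ is at most $1$ — otherwise there is a label $q$ between them whose $P$-block crosses $B'$, contradicting that $P$ is noncrossing — so that after the whole configuration between these labels is translated up by $\rank_P(B')$, all such lasers land on the east step formerly hit by $\ell(\min(B')-1)$ and the intermediate block structure and ranks survive. For $p=\min(B)-1$ one must show $\ell(p)$ terminates east of where $\ell(\min(B')-1)$ did (the alternative forces the block of $\min(B')-1$ to straddle $B$ and $B'$, contradicting that $P'$ is noncrossing), so $\ell(\min(B)-1)$ is literally unchanged and the removal of $\ell(\min(B')-1)$ makes the labels of $B'$ visible to those of $B$ and to nothing new. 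Without these two steps the proof does not go through; with them, your argument coincides with the paper's.
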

\begin{proof}
Without loss of generality assume $\min(B) < \min(B')$. The Dyck path operation which merges $B$ and $B'$ consists of removing the vertical run of length $\rank(B')$ atop $\min(B') - 1$ and adding $\rank(B')$ north steps to the vertical run atop $\min(B) - 1$.  We will now verify that this indeed gives the desired result.  Let $D'$ denote the Dyck path which results from applying this operation to $D$. The only lasers $\ell(p)$ which are potentially affected by this operation are those such that $\min(B) - 1 \leq p \leq \min(B') - 1$.  For now, assume $p \neq \min(B) - 1$.  If $\ell(p)$ hits west of $\min(B') - 1$ in $D$ then it is unchanged in $D'$, so we need only consider the case where it hits east of $\min(B') - 1$.  Observe that the horizontal distance from $\ell(\min(B') - 1)$ and $\ell(P)$ is at most 1.  To see this, suppose it were greater than 1.  Then there would exist a label $q > \max(B')$ such that $\ell(\min(B') - 1)$ hits $D$ west of $q$ and $\ell(p)$ hits $D$ east of $q$.  Let $B''$ be the block containing $q$.  Then $\min(B') \leq \min(B'') < \max(B') < q \leq \max(B'')$ which contradicts the fact that $P$ is noncrossing. This implies that in $D'$, all such lasers hit the east step hit by $\ell(\min(B')-1)$.  Each of these lasers is translated vertically by $\rank(B')$ units, so the block structure and ranks of other blocks of $P$ remain unchanged.

Now consider the case where $p = \min(B) - 1$.  If $\ell(p)$ hits $D$ east of $\ell(\min(B') - 1)$ then $\ell(p)$ is the same laser in $D$ and $D'$.  Since $\ell(\min(B') - 1)$ disappears, all labels of $B'$ become visible to labels of $B$, so the blocks union and the ranks sum, as desired. Now suppose $\ell(p)$ hits $D$ west of $\ell(\min(B')-1)$.  Let $C$ denote the block containing $\min(B') - 1$.  Then we must have $\min(C) \leq \min(B) - 1 < \min(B') - 1$, which implies that merging $B$ and $B'$ would create a crossing, a contradiction.  

By Lemma \ref{krewCom} we must have $Q' = \krew(P')$. Let $C' \in Q'$ and suppose $\max(C') = \max(C)$ for some $C \in Q$.  Since the merge operation preserves all vertical run lengths except two, each of which is a $P$-rise, we know that the rank of $C$ must be preserved.  On the other hand, the merge operation removes some lasers from $D$, so some elements which were originally in $C$ will no longer fire lasers, forcing them to be in their own block of rank 0 in $Q'$. 
\end{proof}

For example, consider once again the 10,7-Dyck path from Figure \ref{nonC}, along with its associated noncrossing partitions $P$ and $Q$.  Suppose we would like to merge the blocks $B = \{1,2\}$ and $B' = \{3,6\}$ in $P$.  Doing so gives the partition $P' = \{\{1,2,3,6\}, \{4,5\}\}$ which is indeed noncrossing, so $(P', Q') \in NC(a,b)$.  We have $Q' = \krew(P')$.  Since 6 was the maximal element of the $Q$ block $\{2,6\}$ with rank 1, the $\rank_{Q'}(\{6\}) = 1$.  Since 2 was not a maximal element of a block in $Q$, its rank is now 0. All other blocks and ranks are preserved.

Next let's examine how we need to modify $D$ to obtain $D'$, where $\pi(D') = (P', Q')$.  We remove the vertical run of length $2 = \rank(\{3,6\})$ above 2 and adding north steps to the vertical run atop $0 = \min(\{1,2\}) - 1$.  The new path $D'$, along with $P'$ and $Q'$, each with rank labels shown, are shown below in Figure \ref{merge}. 

\begin{figure}[h]
\includegraphics[scale=.4]{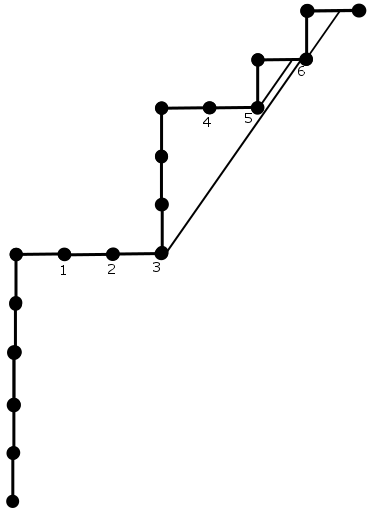}
\includegraphics[scale=.8]{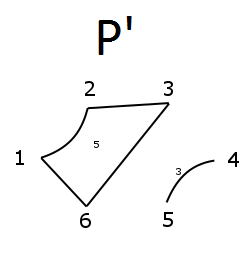}
\hspace{1cm}
\includegraphics[scale=.8]{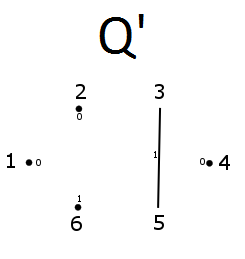}
\caption{The Dyck path $D'$, along with $P'$ and $Q'$}
\label{merge}
\end{figure}

We now discuss the problem of determining whether an arbitrary labeled pair of noncrossing partitions is in fact a member of $NC(a,b)$.  First, we will define a partial order $\preceq$ on the blocks of any pair $(P,Q)$ of noncrossing partitions by

\[ B' \preceq B \mbox{ if } \begin{cases} B, B' \in P \mbox{ and } [\min(B'), \max(B')] \subset [\min(B), \max(B)] \\ B' \in Q, B \in P, \mbox{ and } \max(B') \in [\min(B), \max(B)] \\ B' = B, B \in Q \end{cases}\]

This partial order will tell us when we can absorb the rank of a block of $Q$ into the rank of a block of $P$ to obtain a new element of $NC(a,b)$.  

\begin{lemma}\label{qMerge}
Let $(P,Q) \in NC(a,b)$, $B \in P$, $B' \in Q$, and suppose $B'$ is covered by $B$ under $\preceq$.  Define a pair $(P', Q')$ as follows:  $P'$ is obtained from $P$ by simply increasing the rank of $B$ by $\rank(B')$.  $Q' = \krew(P')$ and $B'$ is assigned rank 0.  Then $(P', Q') \in NC(a,b)$. 
\end{lemma}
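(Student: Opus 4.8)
The plan is to realize the pair $(P',Q')$ as $\pi(D')$ for an explicitly constructed Dyck path $D'$, and then to check that $D'$ is legal and that $\pi(D')$ has exactly the claimed blocks and ranks. First I would unpack the covering hypothesis. Since $\max(B')$ lies in a unique block of $P$ and any block strictly between $B'$ and $B$ in the order $\preceq$ must be a block of $P$ nested inside $B$, the relation $B'\lessdot B$ says precisely that $B$ is \emph{the} block of $P$ that contains the label $\max(B')$. In particular both $\min(B)$ and $\max(B')$ lie in $B$, and $\min(B)\le \max(B')\le \max(B)$; I will use this repeatedly.

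Next I would define $D'$ by a single run-move. Writing $p=\rank_P(B)$ and $q=\rank_Q(B')$, I set the vertical run above $\max(B')$ (the $Q$-rise, of length $q$) to length $0$ and lengthen the $P$-rise above $\min(B)-1$ from $p$ to $p+q$, leaving every other run unchanged; by Proposition \ref{minMax} this is exactly the path whose rank sequence is that of $(P',Q')$. A short profile computation shows $D'$ is a legal $a,b$-Dyck path: its height function agrees with that of $D$ outside the $x$-range $[\min(B)-1,\max(B')-1]$ and exceeds it by exactly $q$ on that range, so raising the profile cannot bring the path below $y=\tfrac ab x$, while the numbers of north and east steps are preserved.

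The heart of the argument is to prove
\[ \ell(D') = \ell(D)\setminus\{(\max(B'),\max(B'))\}. \]
Granting this, Lemmas \ref{lasers} and \ref{krewCom} finish the proof: the deleted pair is the max--max laser of the nontrivial block $B'$ and appears only in the second set of Lemma \ref{lasers}, so the first set — hence $\krew(P)$, and therefore the unlabeled partitions $P$ and $Q$ — is unchanged, whereas the lengthened $P$-rise raises $\rank_P(B)$ to $p+q$ and the vanished $Q$-rise drops $B'$ to rank $0$. To establish the displayed identity I would track lasers through the run-move. The source $\max(B')$ stops firing, deleting $(\max(B'),\max(B'))$, and no other label changes its firing status, so no new sources appear. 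Because a laser runs beneath the path until the path descends to meet it, raising the profile by $q$ on $[\min(B)-1,\max(B')-1]$ only pushes the ceiling away from any laser running beneath that stretch; consequently a laser whose source and target both lie to one side of the stretch, or both within it, keeps its west-endpoint label, the former being literally unchanged and the latter translated up by $q$ onto the correspondingly raised east step.

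The one thing that can go wrong — and the step I expect to be the main obstacle — is a laser that \emph{straddles} the boundary of the raised stretch, having exactly one endpoint inside $[\min(B)-1,\max(B')-1]$, for then one endpoint is displaced by $q$ while the other is fixed and its landing could move. I would rule these out using mutual noncrossing (Proposition \ref{muNonCrossing}) together with the fact that $\min(B)$ and $\max(B')$ both lie in $B$. Writing a straddling laser's source and target as two elements of a single block $C$ of $Q$ (Lemma \ref{lasers}), the hypothesis that one of them lies in the open interval from $\min(B)$ to $\max(B')$ while the other lies outside $[\min(B),\max(B')]$ forces the four labels drawn from $B$ and $C$ into a forbidden $P,Q,P,Q$ interleaving. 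The genuinely delicate part is the degenerate cases where these four labels collide — when the straddling laser is itself one of the defining lasers of $B'$ (so its target is exactly $\max(B')$), when an endpoint coincides with $\min(B)$, or when its source belongs to a block of $P$ nested inside $B$ — each of which I would dispatch by applying the same noncrossing principle to the appropriate nested block, supplemented where needed by Proposition \ref{minMax}.
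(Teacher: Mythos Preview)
Your approach is the same as the paper's in outline: construct $D'$ by moving the $Q$-rise above $\max(B')$ onto the $P$-rise above $\min(B)-1$, then analyze how lasers change. The paper, like you, argues that the only laser lost is $(\max(B'),\max(B'))$ and that the block structure is otherwise unchanged. So strategically you are on target.

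The gap is in how you handle what you call the degenerate cases. The lasers $(p,\max(B'))$ for $p\in B'\setminus\{\max(B')\}$ genuinely straddle the boundary: their source $p$ lies in the raised stretch while the east step they hit (with west $x$-coordinate $\max(B')$) is \emph{not} raised. These lasers are not ruled out by a mutual-noncrossing argument---here the $Q$-block of the endpoints is $B'$ itself, and the four labels $\min(B),p,\max(B'),\max(B')$ collapse, so there is nothing to interleave. They \emph{exist} and must be shown to \emph{persist} in $D'$, which is the opposite of ``dispatching'' them. The paper handles exactly this case with two ingredients you do not supply:
\begin{itemize}
\item First, it uses the covering hypothesis directly (not just the consequence $\max(B')\in B$): if some $\ell(p)$ with $p\in[\min(B),\max(B')]$ landed on an east step strictly beyond the one following $\max(B')$, then a label $q$ lying between the landing points of $\ell(\max(B'))$ and $\ell(p)$ would belong to a $P$-block $C$ with $B'\prec C\prec B$, contradicting $B'\lessdot B$. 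This is a nested-$P$-block argument, not a $P$--$Q$ crossing argument.
\item Second, having forced every such $\ell(p)$ onto the east step immediately after $\max(B')$, the paper observes that in $D'$ these lasers are translated up by $q=\rank(B')$ relative to that step, so their landing points slide west---but at most as far as the old landing point of $\ell(\max(B'))$, hence they stay on the same step.
\end{itemize}
Your proposal gestures at ``the appropriate nested block'' but does not articulate either step, and your stated mechanism (a forbidden $P,Q,P,Q$ interleaving with $B$) is not what does the work here. To complete the proof along your lines you need to separate off the lasers hitting the $(\max(B')+1)$-th step, invoke the covering relation to show no laser from the raised range lands further east, and then give the geometric shift argument to confirm those lasers survive.
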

\begin{proof}
Let $D$ denote the Dyck path such that $\pi(D) = (P,Q)$.   The Dyck path operation which performs the desired merge moves the $Q$-rise from above $\max(B')$ to the $P$-rise above $\min(B) - 1$.  This clearly increases the rank of $B$ by $\rank(B')$. Furthermore, the east step hit by $\ell(p)$ for all $p \in B'$ is preceded by a $Q$-rise of length 0, so the block rank becomes 0. To see that all other blocks and ranks are fixed by this process it is enough to consider how the lasers are affected.  The laser $\ell(p)$ is translated vertically by $\rank(B')$ units if and only if $\min(B) \leq p \leq \max(B')$.  However, the portion of $D$ which lies between these labels is also translated vertically by $\rank(B')$, so no changes can take place unless $\ell(p)$ hits $D$ east of $\max(B')$. 

Without loss of generality, assume $p$ is the largest label such that $\ell(p)$ hits east of $\max(B')$, and suppose that $\ell(\max(B'))$ and $\ell(p)$ fail to hit the same east step.  Then there must exist a label $q$ which lies between $\ell(\max(B'))$ and $\ell(p)$.  Let $C$ be the block of $P$ containg $q$.  Then 
\[ \min(B) \leq p < \min(C) \leq \max(B') \leq \max(C) \leq \max(B).\]
where the last inequality follows since $P$ is noncrossing. Thus, $B' \prec C \prec B$, contradicting the fact that $B$  covers $B'$.  Thus, we may assume that $\ell(\max(B'))$ and $\ell(p)$ hit the same east step.  We know that $\ell(\max(B'))$ hits the east step immediately following the label $\max(B')$, so $\ell(p)$ must as well.  In the modified Dyck path, this step is translated down so the lasers will so the points where they make contact with the east step will shift west.  Since the westernmost laser which hits this east step is $\ell(\max(B'))$, no laser will hit further west than the point labeled $\max(B')$.  Thus, $\ell(p)$ will still hit the same east step it originally did.  Thus, the block structure is preserved and $B'$ now has rank 0.
\end{proof}

\begin{figure}[h]
\includegraphics[scale=.4]{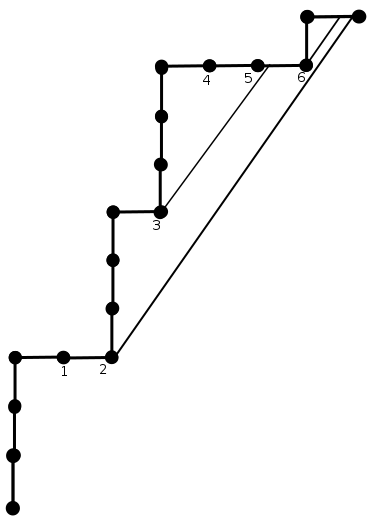}
\includegraphics[scale=.7]{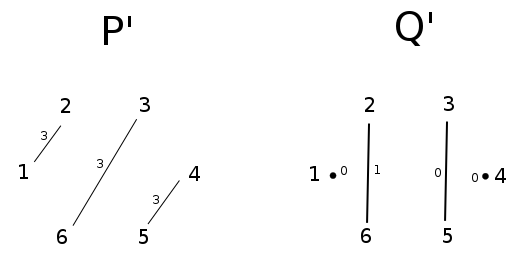}
\caption{The path and partitions obtained from the merge}
\label{Qmerge}
\end{figure}

For example, consider again the 10,7-Dyck path from Figure \ref{nonC}.  Suppose we wish to merge the $Q$-block $\{3,5\}$ of rank 1 with the $P$-block $\{3,6\}$ of rank 2.  The Dyck path operation removes the $Q$-rise of length 1 from above 5 and places it into the vertical run above 2.  On the level of partitions, we obtain $P'$ by increasing the rank of $\{3,6\}$ by 1.  We obtain $Q'$ by changing the rank of $\{3,5\}$ to 0.  Figure \ref{Qmerge} shows the resulting path and partitions $P'$ and $Q'$.

Unlike in the case where $a < b$, ranks are no longer uniquely determined by the partition structure.   However, slope considerations do limit which ranks can possibly be assigned to a given block.  

\begin{defn}
Let $a$ and $b$ be coprime positive integers and $(P,Q) \in NC(a,b)$.  We say that a block $B$ of $P$ satiesfies the \emph{rank condition} if

\[  (\max(B) - \min(B) + 1)\frac{a}{b}  \leq \sum_{B' \preceq B} \rank(B') \leq (\max(B) - \min(B) + 1)\frac{a}{b} + \frac{a}{b} .\]
\end{defn}

Note that here we use rank to indicate the label of the block $B'$ rather than anything having to do with vertical run lengths.  This way, it makes sense to ask whether any block $B \in P$ for a labeled pair $(P,Q)$ satisfies the rank condition.  Such an inequality must hold for $(P,Q) \in NC(a,b)$.  The argument is essentially identical to that given in Proposition 3.8 of \cite{RatCat} when one considers the fact that $Q$ block ranks also contribute vertical runs. When $a < b$ the lower and upper bounds necessarily agree and uniquely determine the rank of each block, which is why labels on the partition were unnecessary in that case.

The following theorem characterizes precisely when a pair $(P,Q)$ belongs to $NC(a,b)$.  This is a generalization of Theorem 3.15 in \cite{RatCat}, which provides such a characterization when $a < b$. 

\begin{theorem}\label{char}
Let $(P,Q)$ be a pair of labeled mutually noncrossing partitions and $a$ and $b$ be fixed, coprime positive integers.  Then $(P,Q) \in NC(a,b)$  if and only if the following conditions hold:
\begin{enumerate}
\item $\sum_{B \in P} \rank(B) + \sum_{B' \in Q} \rank(B') = a$
\item We have $\rank(B) < a/b$ for all $B \in Q$
\item $Q = \krew(P)$
\item The rank condition holds for all blocks in $\rot^m(P,Q)$ for $1 \leq m \leq b-1$.
\end{enumerate}
\end{theorem}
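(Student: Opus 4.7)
The plan is to prove the biconditional by handling the two directions separately, as they require different techniques.

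For the forward direction, assume $(P, Q) = \pi(D)$ for some $a,b$-Dyck path $D$. Condition (1) follows immediately: the paper notes that every north step contributes to exactly one block rank, and $D$ has $a$ north steps. Condition (2) is the defining property of a $Q$-rise. Condition (3) is Lemma \ref{krewCom}. For condition (4), I would first verify the rank inequality for $(P, Q)$ itself by a direct geometric argument generalizing Proposition 3.8 of \cite{RatCat}: the sum $\sum_{B' \preceq B} \rank(B')$ measures the total height gained by $D$ across a contiguous horizontal stretch of width $\max(B) - \min(B) + 1$, with the lower bound coming from $D$ lying above $y = (a/b)x$ and the upper bound from the fact that the path can rise above that diagonal by at most one vertical run's worth once past the relevant right endpoint. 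Applying this to every rotation $\rot^m(P, Q)$---which lies in $NC(a,b)$ by Proposition \ref{rotPreserve} with ranks preserved by Proposition \ref{rankRot}---gives condition (4).

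For the backward direction, assume $(P, Q)$ satisfies (1)--(4) and build the candidate path
\[ D = N^{p_1} E N^{\max(p_2, q_1)} E \cdots N^{\max(p_{b-1}, q_{b-2})} E N^{q_{b-1}} E \]
determined by the rank sequence. The plan is to verify three things in order: (a) $D$ ends at $(b, a)$; (b) $D$ stays above $y = (a/b)x$; and (c) $\pi(D) = (P, Q)$. For (a), the subclaim is that for each $i \geq 2$ at most one of $p_i, q_{i-1}$ is positive, so that $\max(p_i, q_{i-1}) = p_i + q_{i-1}$ and the total number of north steps equals $a$ by condition (1). This follows from condition (3) via a basic property of the Kreweras complement: if $i = \min(B)$ for some $B \in P$, then $(i-1)'$ and either $i'$ (if $B = \{i\}$) or $\max(B)'$ lie on the same side of the chord for $B$, so the block of $Q = \krew(P)$ containing $i-1$ also contains some index $\geq i$, ruling out $i-1$ as a block maximum. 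For (b), I would match each east-step right endpoint along $D$ with the starting position of a corresponding cyclic shift of $D$ arising from $\rot^m(P, Q)$, and invoke the rank condition at that rotation---which is precisely the inequality asserting that $D$ lies above the diagonal at the given point. For (c), using Lemma \ref{lasers} and condition (3), the lasers of $D$ are determined by the blocks of $P$ and $Q$; condition (2) then distinguishes $P$-rises from $Q$-rises, so applying $\pi$ to $D$ recovers $(P, Q)$ with the prescribed ranks.

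The hard part will be step (b) of the backward direction: upgrading the collection of local rank inequalities at all rotations into a uniform diagonal-staying property of $D$. The key idea is that cyclic shifts of $D$ are in natural bijection with the rotations of $(P, Q)$ (consistent with Proposition \ref{rotPreserve}), and every east-step right endpoint of $D$ arises as the origin of some cyclic shift; the rank condition at the corresponding rotation provides precisely the desired inequality at that critical point. Together with the bookkeeping in step (c) required to confirm that the ranks assigned by $\pi$ match the input labels, this is where the bulk of the work will lie.
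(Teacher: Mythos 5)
Your forward direction tracks the paper exactly, and your step (a) is fine --- in fact your Kreweras-complement argument that $p_i>0$ forces $q_{i-1}=0$ is more careful than the paper, which only asserts that condition (1) gives a path of height $a$. The gap is in the backward direction, where you have located the difficulty in the wrong place. Showing that the candidate path $D=D_{(P,Q)}$ stays above $y=\tfrac{a}{b}x$ is neither the hard part nor sufficient: the real content of the theorem is that every laser prescribed by $\krew(P)$ via Lemma \ref{lasers} actually reaches its intended east step without first striking an intervening horizontal run of $D$. A laser fired from the labeled point $(i,h_i)$ must clear the line of slope $\tfrac{a}{b}$ \emph{through that point} for its entire flight, which is a strictly stronger local requirement than $D$ clearing the line through the origin. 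The paper's own counterexample makes this concrete: for $(a,b)=(7,4)$, the pair $P=\{\{1,3\},\{2\}\}$ (ranks $5,1$) and $Q=\{\{1,2\},\{3\}\}$ (ranks $1,0$) produces the candidate path $N^5ENENEE$, which stays strictly above $y=\tfrac{7}{4}x$ at every interior lattice point, yet $\ell(1)$ is blocked by the east step above label $1$ and lands one step short, so $\pi(D)\neq(P,Q)$. Your step (b) would certify this path and your step (c), which treats recovering $(P,Q)$ as bookkeeping and never invokes condition (4), would then fail silently.

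Condition (4) --- the rank condition at \emph{every} rotation --- is precisely the hypothesis that rules out such premature blocking, and the paper spends the entire second half of its proof on this point: it supposes some intended laser $(i,k-1)$ is blocked at a run above a label $j$, applies the merge Lemmas \ref{pMerge} and \ref{qMerge} repeatedly to reduce to a candidate path with only three vertical runs exhibiting the same blockage, and then contradicts the rank condition at a suitably chosen rotation, splitting into cases according to whether the blocking run is a $P$-rise or a $Q$-rise. Your proposal contains no mechanism playing this role. Separately, even step (b) as stated is too quick: the rank condition for a block $B$ bounds the height gained over the interval $[\min(B)-1,\max(B)]$, so it yields absolute height bounds only at the block-determined positions reachable by anchoring some rotation at $1$; for instance when $P$ is a single block the rank condition is vacuous and the diagonal-staying property must instead be extracted from conditions (1) and (2). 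That part is patchable, but the missing blocked-laser argument is not a patch --- it is the theorem.
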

\begin{proof}
First suppose that $(P,Q) \in NC(a,b)$.  Then there exists a Dyck path $D$ such that $\pi(D) = (P,Q)$ and the vertical sequence of $D$ comes from the ranks of blocks in $P$ and $Q$, so they must sum to $a$.  The second condition follows immediately from slope considerations.  By Lemma \ref{krewCom}, the Kreweras complement uniquely determines $Q$.  Finally, Proposition \ref{rankRot} implies that condition (4) must hold. 

Now suppose that we're given a pair $(P,Q)$ which satisfies each of the conditions $(1) - (4)$. In the case where $a < b$, Proposition \ref{char} reduces to Proposition 3.5 of \cite{RatCat}, so we will only consider the case $a > b$ here.   Although we previously defined rank sequences only for $(P,Q) \in NC(a,b)$, it makes sense to think of them for any labeled pair of noncrossing partitions.  Let $D_{(P,Q)}= N^{p_1}EN^{\max(p_2,q_1)}E \cdots N^{\max(p_{b-1}, q_{b-2})}EN^{q_{b-1}}E$.  By condition (1), $D_{(P,Q)}$ will actually have height $a$ so it is indeed an $a,b$-lattice path.  By Lemma \ref{lasers} we can immediately read off from $\krew(P)$ what the laser set of $D$ must be in order to have $\pi(D) = (P,Q)$.  

For example, consider the pair $P = \{\{1,3\}, \{2\}\}$ with ranks 5 and 1 respectively, and $Q = \{\{1,2\}, \{3\}\}$ with ranks 1 and 0 respectively.  The pair $(P,Q)$ is not in $NC(7,4)$.  It satisfies conditions (1) - (3), and each block of $P$ satisfies the rank condition.  By Lemma \ref{lasers}, $L(D) = \{(1,2), (2,2)\}$.  Now let's examine the rank sequences of $(P,Q)$.  We have $S_Q = (5,1,0)$ and $S_Q = (0,1,0)$.   Thus, the Dyck path which would have to give rise to $(P,Q)$ (if such a Dyck path exists) must look like $D_{(P,Q)} = NNNNNENENEE$, shown in Figure \ref{charPath}. 

\begin{figure}[h]
\includegraphics[scale=.4]{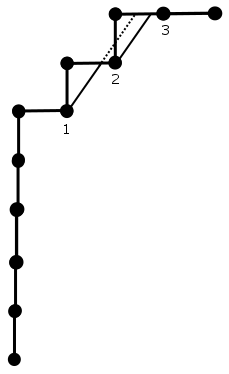}
\caption{The candidate Dyck path for the pair $(P,Q)$.}
\label{charPath}
\end{figure}
 We immediately see that the laser set is $\{(1,1), (2,2)\}$.  However, if we extend $\ell(1)$ further, along the dashed line shown in Figure \ref{charPath}, we see that it was certainly set up to hit in the appropriate spot, if only another part of $D$ had not gotten in the way.   More generally, this is true of any laser fired from the bottom of a $P$-rise since each block of $P$ satisfies the rank condition.  Moreover, this is the only thing which can go wrong since a laser fired from the bottom of a $Q$-rise will always hit the next east step by condition (2). 

This is a special feature of the case $a > b$.  Namely, the interval
\[ \left[ \left\lceil m\frac{a}{b} \right\rceil, \left\lfloor (m+1)\frac{a}{b} \right\rfloor \right]\]
is nonempty for any $m \in [b-1]$, so the rank condition is just another way of saying that the laser which cuts out a block can and will hit on the appropriate east step.  However, the proof we give here for $a > b$ differs from the one given in \cite{RatCat} of Proposition 3.5 since knowing $p$ and which east step $\ell(p)$ hits no longer uniquely determines the height difference of $p$ and that east step.

Let $i$ and $k$ be such that $(i,k-1)$ should be a laser according to $\krew(P)$, but such that it fails to be a laser in $D = D_{(P,Q)}$ because it first hits a horizontal segment of $D$ whose easternmost endpoint is labeled $j$.  To simplify things, we can repeatedly apply Lemmas \ref{pMerge} and \ref{qMerge} to $(P,Q)$ to obtain a pair $(P', Q')$ which has the same problem when we consider $D' = D_{(P', Q')}$.   Since $NC(a,b)$ is closed under the merge operations of these lemmas, it will suffice to derive a contradiction for $D'$ coming from $(P', Q')$ of the form shown in Figure \ref{charProof}.  In particular, $D'$ will contain exactly three vertical runs: the initial vertical run above the origin of length $A$, the vertical run atop $i$ of length B, and the vertical run atop $j$ of length $C$.

\begin{figure}[h]
\includegraphics[scale=.7]{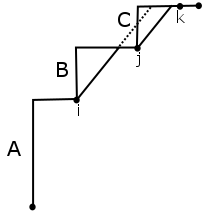}
\caption{A simplified candidate Dyck path $D'$}
\label{charProof}
\end{figure}

Since each laser is of slope $\frac{a}{b}$, we must have 
\[ (j-i)a/b > B. \hspace{1cm}(*) \]
Furthermore, $\ell(j)$ must hit $D'$ on the east step between the labels $k-1$ and $k$.  To see this, observe that if $\ell(j)$ hit west of $k-1$ then there would be no interference with $\ell(i)$. If $\ell(j)$ hit east of $k$ then we would have one block containing $j$ and $k+1$, and another block containing $i$ and $k$, which would imply a crossing.  There are now two cases to consider: either $j$ is at the bottom of a $P$-rise or a $Q$-rise.  

First, suppose $j$ is at the bottom of a $P$-rise.  Then we have $P' = \{B_1, B_2, B_3\}$ where $B_1 = [1,i] \cup [k,b-1]$, $B_2 = [i+1, j]$ and $B_3 = [j+1, k-1]$, and $Q'$ consists of only trivial blocks of rank 0.  By condition (4), every rotation of $(P',Q')$ satisfies the rank condition for each rotation block of $P'$, so we may as well assume $P'$ is rotated so that $B_1 = [k-i, b-1]$, $B_2 = [1, j-i]$, and $B_3 = [j+1-i, k-1-i]$. Since $B_2$ satisfies the rank condition, we must have that $ (j-i)a/b  \leq \rank(B_2) = B$.  However, by $(*)$ we have $(j-i)a/b > B$, a contradiction. 

Now suppose $j$ is at the bottom of a $Q$-rise.  Then we have $k = j+1$ and $P' = \{B_1, B_2\}$ where $B_1 = [1,i] \cup [j+1, b-1]$ and $B_2  = [i+1, j]$.  The partition $Q'$ consists of a single nontrivial block $B' = \{i,j\}$ of rank $C$.  As before, it will suffice to consider rotations of $(P', Q')$, so we may now assume that $B_1 = [j+1-i, b-1]$, $B_2 = [1, j-i]$ and $B' = \{j-i, b-1\}$.  Since it is no longer the case that $B' \preceq B_2$ and $B_2$ satisfies the rank condition, we must have $ (j-i)a/b  \leq \rank(B_2) = B$.  However, this contradicts $(*)$ which guarantees $B < (j-i)a/b$. In either case, we conclude that if $(P,Q)$ satisfies conditions (1) through (4) then $(P,Q) \in NC(a,b)$. 
\end{proof}

\subsection{Reflection}

It was shown in \cite{RatCat} that $NC(a,b)$ is closed under the \emph{reflection} operator, given by the permutation 
\[\rfn = \left(\begin{array}{ccccc} 1 & 2 & \cdots & b-2 & b-1 \\ b-1 & b-2 & \cdots & 2 & 1\end{array}\right).\]
When $a > b$ we achieve closure under reflection provided that we choose the appropriate reflection operator on $Q$.  Define $\rfn'$ as follows:  

\begin{align*}
\rfn' &= \left(\begin{array}{ccccc} 1 & 2 & \cdots & b-2 & b-1 \\ b-2 & b-3 & \cdots & 1 & b-1\end{array}\right)
\end{align*}

To simplify notation, define a rotation operator $\rfn''$ by 
\[ \rfn''(B) = \begin{cases} \rfn(B) & \mbox{ if } B \in P \\ \rfn'(B) & \mbox{ if } B \in Q \end{cases}\]

\begin{proposition}
Let $a$ and $b$ be coprime.  If $(P,Q) \in NC(a,b)$ then \linebreak[4] $(\rfn(P), \rfn'(Q)) \in NC(a,b)$, where block labels are preserved. 
\end{proposition}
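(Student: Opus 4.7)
The strategy is to apply Theorem~\ref{char}, verifying conditions (1)--(4) for the labeled pair $(\rfn(P), \rfn'(Q))$. Since $\rfn''$ preserves block labels, conditions (1) and (2) are immediate: the total rank is still $a$, and each $Q$-block rank remains strictly below $a/b$.

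For condition (3), namely $\rfn'(Q) = \krew(\rfn(P))$, I pass to the doubled-circle model. Arrange the $2(b-1)$ vertices $1, 1', 2, 2', \ldots, (b-1), (b-1)'$ clockwise on a disk, so that $\krew(P)$ is the coarsest noncrossing partition of the primed vertices which is mutually noncrossing with $P$ on the unprimed vertices. Consider the reflection $r$ of the disk which acts on unprimed vertices as $i \mapsto b-i$. A short analysis of angular positions shows $r$ acts on primed vertices precisely as $\rfn'$: it sends $i'$ to $(b-1-i)'$ for $i \leq b-2$, and fixes $(b-1)'$. Since $r$ preserves noncrossing structure and commutes with taking the coarsest complement, $\krew(\rfn P) = r(\krew P) = r(Q) = \rfn'(Q)$.

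For condition (4), verify the dihedral relations $\rot \circ \rfn = \rfn \circ \rot^{-1}$ and $\rot \circ \rfn' = \rfn' \circ \rot^{-1}$ directly from the definitions, by case analysis on whether the index wraps past $b-1$. These combine to give $\rot^m \circ \rfn'' = \rfn'' \circ \rot^{-m}$, so every rotation $\rot^m(\rfn P, \rfn' Q)$ equals $\rfn''$ applied to $\rot^{-m}(P, Q)$, whose $P$-blocks already satisfy the rank condition by hypothesis. Thus (4) reduces to a single-pair claim: if every $P$-block of $(P', Q') \in NC(a,b)$ satisfies the rank condition, then so does every $P$-block of $\rfn''(P', Q')$.

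For this single-pair claim, let $D = \pi^{-1}(P', Q')$, and let $D^*$ denote the lattice path whose vertical run sequence comes from the rank sequence of $(\rfn P', \rfn' Q')$ (a priori it is not clear $D^*$ is a Dyck path). The vertical runs of $D^*$ are the same multiset as those of $D$, merely relocated: the $P$-rise of $B \in P'$ moves from position $\min(B)-1$ to $b-1-\max(B)$; the $Q$-rise of $B' \in Q'$ with $b-1 \notin B'$ moves from $\max(B')$ to $b-1-\min(B')$; and the $Q$-rise of the block containing $b-1$ stays at position $b-1$. The hard part will be showing $D^*$ stays above $y = (a/b)x$; this uses the explicit formula $H^*(b-j) = \sum_{B \in P',\, \max(B) \geq j} \rank(B) + \sum_{B' \in Q',\, b-1 \notin B',\, \min(B') \geq j} \rank(B')$, combined with the rank conditions for the $P$-blocks of $(P', Q')$ straddling index $j$, to derive the Dyck inequality. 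Once $D^*$ is verified to be a Dyck path, Lemma~\ref{krewCom} and condition (3) together force $\pi(D^*) = (\rfn P', \rfn' Q')$, hence $(\rfn P', \rfn' Q') \in NC(a,b)$ and the rank condition holds for its $P$-blocks.
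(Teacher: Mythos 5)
Your overall strategy (verify conditions (1)--(4) of Theorem~\ref{char}) is the paper's strategy, and your handling of (1), (2), (3), and the dihedral reduction $\rot^m\circ\rfn''=\rfn''\circ\rot^{-m}$ is correct — indeed your doubled-circle argument for (3) is more detailed than the paper's one-line assertion. The problem is your treatment of the remaining single-pair claim for condition (4). Two issues. First, the inequality you call ``the hard part'' is only announced, not derived; as written the proof stops exactly where the work begins. Second, and more seriously, the concluding inference is invalid: from ``$D^*$ is a valid $a,b$-Dyck path whose vertical run sequence matches the rank sequence of $(\rfn P',\rfn' Q')$'' you cannot conclude $\pi(D^*)=(\rfn P',\rfn' Q')$. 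Lemma~\ref{krewCom} only tells you that whatever pair $\pi(D^*)$ is, its second component is the Kreweras complement of its first; it does not force the first component to be $\rfn(P')$. The paper's own proof of Theorem~\ref{char} exhibits exactly this failure mode: the pair $P=\{\{1,3\},\{2\}\}$ with ranks $5,1$ and $Q=\{\{1,2\},\{3\}\}$ with ranks $1,0$ has candidate path $NNNNNENENEE$, which \emph{is} a valid $7,4$-Dyck path, yet its laser set is $\{(1,1),(2,2)\}$ rather than the intended $\{(1,2),(2,2)\}$, so $\pi$ of it is a different partition. Ruling this out is precisely what the rank condition for \emph{all} rotations accomplishes, so your argument is circular at this point: you would need condition (4) for the reflected pair to justify the very step you are using to prove it.

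The fix is much shorter than what you attempt, and is what the paper does. The rank condition for a block $B$ depends only on the interval length $\max(B)-\min(B)+1$ and on the multiset of ranks of blocks $B'\preceq B$. For each $B\in P$ there is an $m$ (needed because $\rfn'$ fixes $b-1$, so a $Q$-block containing $b-1$ may wrap after reflection) such that $B'\preceq B$ in $(P,Q)$ if and only if $\rot^m(\rfn''(B'))\preceq\rot^m(\rfn(B))$ in $(\rfn(P),\rfn'(Q))$; since ranks and interval lengths are preserved and the hypothesis gives the rank condition for \emph{every} rotation of $(P,Q)$, condition (4) for $(\rfn(P),\rfn'(Q))$ follows immediately, with no Dyck-path construction required.
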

\begin{proof}
Since $(P,Q) \in NC(a,b)$ it must satisfy conditions $(1) - (4)$ in Theorem \ref{char}. Since ranks are preserved, we have 
\[ \sum_{B \in \rfn(P)} \rank(B) + \sum_{B' \in \rfn'(Q)} \rank(B') = \sum_{B \in P} \rank(B) + \sum_{B' \in Q} \rank(B') = a\]
and $\rank(B) < a/b$ for all $B \in Q$.  By the way we have defined $\rfn'$, we have that $\rfn'(Q) = \krew(\rfn(P))$.  Lastly, for any $B \in P$ there exists $m$ such that $B' \preceq B$ in $(P,Q)$ if and only if $\rot^m(\rfn''(B')) \preceq \rot^m(\rfn(B))$ in $(\rfn(P), \rfn'(Q))$.  Thus, every block of $\rfn(P)$ satisfies the rank condition.  By Theorem \ref{char}, we have that $(\rfn(P), \rfn'(Q)) \in NC(a,b)$. 
\end{proof}

\begin{corollary}
Let $a$ and $b$ be coprime.  The set $NC(a,b)$ of $a,b$ noncrossing partitions is closed under the dihedral action $\langle \rot, \rfn'' \rangle$. 
\end{corollary}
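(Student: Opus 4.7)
The plan is to observe that this corollary is essentially a routine consequence of the two preceding results: the rotation closure (Proposition \ref{rotPreserve}) and the reflection closure (the immediately preceding proposition). The dihedral group $\langle \rot, \rfn'' \rangle$ is by definition generated by these two operators, so closure under the group follows from closure under each generator, provided we handle inverses. I would spend one sentence on this setup, then carry out the short inductive argument.

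Concretely, I would first point out that every element of $\langle \rot, \rfn'' \rangle$ can be written as a word $g_k g_{k-1} \cdots g_1$ where each $g_i \in \{\rot, \rot^{-1}, \rfn''\}$; the reflection $\rfn''$ is an involution, so it equals its own inverse. Then I would argue by induction on the word length $k$ that $g_k \cdots g_1$ maps $NC(a,b)$ into itself. The base case $k=0$ is trivial. For the inductive step, assume $(P',Q') := g_{k-1} \cdots g_1(P,Q) \in NC(a,b)$; then applying $g_k$ keeps the result in $NC(a,b)$ by one of the three facts: closure under $\rot$ (Proposition \ref{rotPreserve}), closure under $\rfn''$ (the preceding proposition), or closure under $\rot^{-1}$.

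The one minor point deserving explicit mention is the closure under $\rot^{-1}$. I would note that $\rot$ restricts to a map $NC(a,b) \to NC(a,b)$ on a finite set, and as a cyclic permutation of indices it is injective, hence a bijection on this finite set. Its inverse therefore also maps $NC(a,b)$ to itself. (Alternatively, $\rot^{-1} = \rot^{b-2}$ on $[b-1]$, so closure under $\rot$ alone already gives closure under $\rot^{-1}$.) Since there is no substantive hard part here, I would keep the proof to a few lines and not belabor the generator-by-generator induction; the content of the corollary is already entirely contained in Proposition \ref{rotPreserve} and the preceding proposition.
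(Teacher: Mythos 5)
Your proposal is correct and is exactly the (implicit) reasoning behind the corollary, which the paper states without proof as an immediate consequence of rotation closure (Proposition \ref{rotPreserve}) and the preceding reflection-closure proposition. The only point you add beyond what the paper leaves unsaid is the explicit handling of $\rot^{-1}$ via finiteness or $\rot^{-1}=\rot^{b-2}$, which is fine but not a different approach.
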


\section{$d$-modified Rank Sequences}

We now set out to count the number of $(P,Q) \in NC(a,b)$ which are invariant under $d$-fold rotation, which will ultimately allow us to prove an instance of the cyclic sieving phenomenon.  To do this, we generalize the notion of $d$-modified rank sequences to our pairs $(P,Q)$.  Along the way, we generalize the \emph{good}, \emph{very good}, and \emph{noble} sequences defined in \cite{RatCat}. We will conclude by showing that these $d$-modified rank sequences are in bijective correspondence with those $(P,Q)$ which are invariant under $d$-fold rotation.  This will reduce our problem to counting these sequences.

Let $d|n$ and $P$ be a noncrossing partition of $[n]$ which is invariant under $\rot^d$. Given a block $B$ of $P$, we say $B$ is a \emph{central block} if $\rot^d(B) = B$.  Clearly $P$ can contain at most one central block.  We say $B$ is a \emph{wrapping block} if $B$ is not central and $[\min(B), \max(B)]$ contains every block in the $\langle \rot^d \rangle$-orbit of $B$.  The $\langle \rot^d \rangle$-orbit of a block can contain at most one wrapping block. 

For the remainder of this section, fix positive coprime integers $a$ and $b$, and an integer $1 \leq d < b-1$ such that $d | (b-1)$.  Let $NC_d(a,b)$ denote the set of $(P,Q) \in NC(a,b)$ which are invariant under $\rot^d$.  Given $(P,Q) \in NC_d(a,b)$, we define the $d$-modified $P$ and $Q$ rank sequences as follows: 

\[S_P^d := (p_1, \ldots, p_d) \mbox{ and }  S_Q^d := (q_1, \ldots, q_d)\]

where

\begin{align*}
p_i &:= \begin{cases} \rank_P(B)  &\mbox{if } i = \min(B) \mbox{ for a noncentral, nonwrapping block} \\
& B\in P \\ 
0  \mbox{ otherwise } \end{cases} \\
q_i &:= \begin{cases} \rank_Q(B)  &\mbox{if } b-1 - d + i = \max(B) \mbox{ for a noncentral,} \\ 
& \mbox{nonwrapping block $B \in Q$} \\ 0  \mbox{ otherwise. } \end{cases} 
\end{align*}

It might seem surprising that in the definition of $q_i$ we consider the largest $d$ elements of $[b-1]$ rather than the smallest $d$ elements, as we did for $p_i$.  The reason comes from the fact that $Q$ ranks are defined in terms of maximal block elements rather than minimal ones.  In particular, $\{b-d, b-d+1, \ldots, b-1\}$ is guaranteed to contain at least one maximal element of a nonwrapping $Q$ block in a $\langle \rot^d\rangle$-orbit, whereas $\{1, 2, \ldots, d\}$ might not. 

For example, consider the pair $(P,Q)$  in $NC_3(10,7)$ given in Figure \ref{nonC}.  We have $S_P^3 = (3,0,0)$ since 1 is the minimal element of $\{1,2\}$ which has rank 3, 2 is not the minimal element of a block of $P$, and 3 is the minimal element of a central block of $P$.  We also have $S_Q^3 = (0,1,0)$ since 4 is in a trivial $Q$ block, 5 is the maximal element of a Q block of rank 1, and 6 is the maximal element of a wrapping block of $Q$.  Had we instead only recorded $Q$ ranks of 1, 2, and 3, we would have recorded $(0,0,0)$ and lost all information about the structure of $Q$.

\begin{lemma}\label{dRot}
Let $(P,Q) \in NC_d(a,b)$ and $S_P^d$ and $S_Q^d$ be the $d$-modified $P$ and $Q$ rank sequences of $(P,Q)$.  Then we have
\[ S_P^d(\rot(P,Q)) = \rot(S_P^d(P,Q))\]
and
\[S_Q^d(\rot(P,Q)) = \rot(S_Q^d(P,Q)).\]
\end{lemma}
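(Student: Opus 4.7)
We establish the $P$-sequence identity $S_P^d(\rot(P,Q)) = \rot(S_P^d(P,Q))$; the $Q$-sequence identity follows by a symmetric argument, with block maxima in $\{b-d,\dots,b-1\}$ playing the role of minima in $\{1,\dots,d\}$. Since $\rot$ commutes with $\rot^d$ on labeled noncrossing partitions, we have $\rot(P,Q) \in NC_d(a,b)$ by Proposition \ref{rotPreserve}, and by Proposition \ref{rankRot} every block rank is preserved under $\rot$. A further consequence of the $\rot^d$-invariance of $(P,Q)$ is that all blocks in a single $\langle \rot^d \rangle$-orbit share a common rank.

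The key structural input is that every noncentral $\langle \rot^d \rangle$-orbit of $P$-blocks contains exactly one nonwrapping block whose minimum lies in $\{1,\dots,d\}$ (call this the \emph{representative} of the orbit). The plan is to verify this as follows: a wrapping block $W$ in the orbit must satisfy $\min(W) \leq d$ and $\max(W) \geq b-d$, since applying $\rot^{\pm d}$ to $W$ yields another orbit block that must lie inside $[\min(W),\max(W)]$; and among the nonwrapping orbit blocks the minima are cyclically spaced by $d$, so exactly one lies in $\{1,\dots,d\}$. A short noncrossing argument then shows no representative contains $b-1$: if $b-1 \in B$ with $\min(B) = i \leq d$, then every $\rot^{kd}(B)$ contains the element $kd \pmod{b-1}$, which lies in $\{d,2d,\dots\}$ and never in $\{1,\dots,i-1\}$. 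By noncrossing and $b-1 \in B$, any orbit block with an element in $\{1,\dots,i-1\}$ would have to be entirely contained in $\{1,\dots,i-1\}$, and since no orbit block is so contained, $B$ must be wrapping. Thus $S_P^d(P,Q)$ records precisely the common rank of each noncentral orbit, placed at the minimum of its representative.

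With the structural fact in hand, I would trace how $\rot$ permutes representatives. If a representative $B$ has $\min(B) = i < d$, then $b-1 \notin B$ gives $\rot(B) = B + 1$ with $\min(\rot(B)) = i+1 \in \{2,\dots,d\}$; a direct check confirms $\rot(B)$ remains nonwrapping (any element witnessing the nonwrapping of $B$ still witnesses the nonwrapping of $\rot(B)$ after shifting by $1$), so it is the representative of its orbit in $\rot(P,Q)$, and rank preservation yields $p'_{i+1} = p_i$. If instead $\min(B) = d$, then $\rot(B)$ has minimum $d+1$ and leaves the range $\{1,\dots,d\}$; however, the orbit block $W := \rot^{-d}(B)$ contains $b-1$ (since $\rot^{-d}(d) = b-1$), so $\rot(W)$ has minimum $1$, and one verifies $\rot(W)$ is nonwrapping in $\rot(P,Q)$ with $\rot(B)$ serving as the outside witness (because $\max(\rot(B)) = \max(B) + 1 > \max(B) - d + 1 = \max(\rot(W))$). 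Hence $\rot(W)$ is the new representative at position $1$, and the shared-rank property of orbits gives $p'_1 = \rank(\rot(W)) = \rank(W) = \rank(B) = p_d$. Combining the cases produces $S_P^d(\rot(P,Q)) = \rot(S_P^d(P,Q))$.

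The main obstacle is the uniqueness of the representative in each orbit, where the noncrossing condition is essential (and the exclusion of representatives containing $b-1$ plays a key role), and the closing-up-the-cycle transition in which $\rot(W)$ becomes the new representative at position $1$ when the previous representative rotates out of range. The shared-rank property of $\langle \rot^d \rangle$-orbits is what does the essential work of matching $p'_1$ with $p_d$; without it, the wrapping-to-representative handoff would not be rank-compatible.
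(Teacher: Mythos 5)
Your overall strategy is viable and genuinely different from the paper's: the paper disposes of the $P$-identity by citing the corresponding argument for $a<b$ from \cite{RatCat} together with Proposition \ref{rankRot}, and then proves the $Q$-identity by a direct index-by-index case analysis (comparing $q_i'$ with $q_{i-1}$ according to whether the relevant position is the maximum of a noncentral nonwrapping block, a wrapping block, a central block, or not a maximum at all). You instead build everything on a single structural claim about orbit representatives and then invoke symmetry for $Q$. The shared-rank property of $\langle\rot^d\rangle$-orbits and the hand-off computation $p_1'=p_d$ via $W=\rot^{-d}(B)$ are correct and would make the $P$-half self-contained rather than cited.

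The gap is in the structural claim itself, which is the load-bearing step. You assert that ``among the nonwrapping orbit blocks the minima are cyclically spaced by $d$, so exactly one lies in $\{1,\dots,d\}$,'' but neither half is established. First, $\min(\rot^d(B))=\min(B)+d$ can fail even for nonwrapping $B$ (when $B$ contains elements above $b-1-d$, its translate acquires a small element), so the spacing claim needs an argument; this is exactly where noncrossing plus $\rot^d$-invariance must be used, e.g.\ by showing the $q=(b-1)/d$ orbit blocks occupy $q$ pairwise disjoint cyclic arcs of length at most $d$ related by translation by $d$. Second, and more importantly, even granting that the $q-1$ nonwrapping minima occupy $q-1$ of the $q$ windows $\{kd+1,\dots,(k+1)d\}$ when a wrapping block is present, your conclusion requires that the skipped window is not $\{1,\dots,d\}$. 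This is true --- the wrapping block is precisely the orbit block whose cyclic span crosses the boundary between $b-1$ and $1$, so the window it accounts for is $\{b-d,\dots,b-1\}$ --- but nothing in your sketch rules out the alternative, and if the skipped window were $\{1,\dots,d\}$ the bijection of representatives, and hence the whole proof, would collapse. (Note that the wrapping block's \emph{linear} minimum does lie in $\{1,\dots,d\}$, which is why the restriction to nonwrapping blocks is delicate exactly here.) Finally, the appeal to ``a symmetric argument'' for $Q$ should at least record that the reflection $i\mapsto b-i$, which exchanges minima with maxima and the two windows, also conjugates $\rot$ into $\rot^{-1}$, so one needs the (easy) equivalence of the statement for $\rot$ and $\rot^{-1}$; the paper sidesteps this by arguing the $Q$-case directly.
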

\begin{proof}
The first equality follows by applying the same argument as in the proof of Lemma 4.2 in \cite{RatCat} to the $d$-modified $P$ rank sequences and using Proposition \ref{rankRot}.  We present here the proof of the second equality.  Let 
\[S_Q^d(\rot(P,Q)) = (q_1', q_2', \ldots, q_d')\]
 be the $d$-modified $Q$ rank sequence of $\rot(P,Q)$ and $1 \leq i \leq d$.  We will show that $q_i' = q_{i-1}$ where subscripts are interpreted modulo $d$. 

\textbf{Case 1:} $2 \leq i \leq d$.  If $q_{i-1} > 0$ then $i-1 = \max(B)$ for some non-central, non-wrapping block $B \in Q$.  Thus $i = \max(\rot(B))$ and $\rot(B)$ is non-central and non-wrapping so $q_i' = q_{i-1}$. Next suppose $q_{i-1} = 0$.  If $i-1$ was not the maximal element of a block of $Q$ then $i$ is not the maximal element of $\rot(Q)$, so $q_i' = 0$.  If $i-1 = \max(B)$ for a wrapping block $B$ then $\rot(B)$ is wrapping and $i = \max(\rot(B))$, so $q_i' =  0$.  If $i-1 = \max(B)$ for a central block $B$ then $\rot(B)$ is central and $i = \max(\rot(B))$ so $q_i'$ is 0. 

\textbf{Case 2:} $i=1$. Suppose $b-1 = \max(B)$ for some non-central, non-wrapping block $B$. By rotational symmetry, $\rot^{b-1-d}(B)$ is a non-central, non-wrapping block of $Q$ with max $b-1-d$ and rank $q_d$.  Thus, $b-d$ is the max of a rotated block in $\rot(Q)$, so we have $q_1' = q_d$.  Now suppose $b-1 \in B$ where $B$ is central. Then $1 \in \rot(B)$ which is also central, so  $q_1' = q_d = 0$.  Lastly, suppose $B$ is central.  If $b-1$ is the only element of $B$ in $\{b-d, b-d+1, \ldots, b-1\}$ then 1 is not the maximal element of $\rot(B)$ so by rotational symmetry, $b-d$ is not the maximal element of a $Q$ block and we have $q_1' = 0$.  On the other hand, if $b-1$ is not the only element of $B$ in $\{b-d, b-d+1, \ldots, b-1\}$ then $\rot(B)$ is still wrapping so $q_1' = 0$.
\end{proof}

Define the set of \emph{good sequence pairs} to be the set of nonnegative integer sequence pairs of length $d$, $(S_P^d, S_Q^d) =((p_1, \ldots, p_d), (q_1, \ldots, q_d))$, such that the following hold:
\begin{itemize}
\item $p_i = 0$ or $p_i > a/b$ for each $i \in [d]$
\item $q_i < a/b$
\item $\sum_{i=1}^d p_i + q_i \leq ad/(b-1)$, and 
\item there does not exist $i \in [d]$ such that both $p_{i+1}$ and $q_i$ are nonzero, where subscripts are interpreted modulo $d$. 
\end{itemize}
When $a < b$, $S_Q^d$ is always a sequence of all 0's and the sequences $S_P^d$ are exactly the good sequences defined in \cite{RatCat}.  Our goal is to show that the set of $\rot^d$-invariant pairs of noncrossing partitions in $NC(a,b)$ are in bijective correspondence with the set of good sequence pairs.  The next few pages will consist of a series of somewhat technical lemmas and propositions that will build up to a proof of this bijection.

We say $(P,Q) \in NC_d(a,b)$ is \emph{noble} if the following conditions hold:
\begin{enumerate}
\item neither $P$ nor $Q$ contains any wrapping blocks
\item if $P$ contains a central block $B$ then $1 \in B$
\item if $Q$ contains a central block $B$ then $b-1 \in Q$.
\end{enumerate}

Observe that since $P$ and $Q$ are mutually noncrossing, there can be at most one central block in total.  

\begin{lemma}\label{centralQb}
Suppose $(P,Q) \in NC_d(a,b)$ and that $Q$ contains a central block $B$.  Then either $b-1 \in B$ or $P$ contains a wrapping block, but not both.
\end{lemma}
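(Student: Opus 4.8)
The plan is to argue on the interleaved circle $1,1',2,2',\dots,(b-1),(b-1)'$ used to define the Kreweras complement, exploiting that $Q=\krew(P)$ (Lemma~\ref{krewCom}) and that $P,Q$ are mutually noncrossing (Proposition~\ref{muNonCrossing}), together with the $\rot^d$-invariance of both. Write the central block as $B=\{c_1'<\dots<c_m'\}$ on the primed vertices. Since $B$ surrounds the center of the disk, each block of $P$ lies inside a single \emph{gap} of $B$, i.e.\ inside one of the arcs cut off by the hull edges $c_i'c_{i+1}'$. The unprimed content of the gap between $c_i'$ and $c_{i+1}'$ is the contiguous interval $\{c_i+1,\dots,c_{i+1}\}$, except for the wraparound gap $g$ between $c_m'$ and $c_1'$, whose unprimed content is $U=\{c_m+1,\dots,b-1\}\cup\{1,\dots,c_1\}$. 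Observe that $b-1\in B$ exactly when $c_m=b-1$, in which case the ``cut'' between the vertices $b-1$ and $1$ sits at the $B$-vertex $(b-1)'$ and \emph{every} gap is a contiguous interval of $[b-1]$.

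\textbf{Key tool.} The argument relies on the local description of the Kreweras complement: two primed vertices lie in the same block of $\krew(P)$ if and only if the unprimed vertices strictly between them (on either arc) form a union of complete blocks of $P$, i.e.\ are ``$P$-closed''; this is an equivalence relation because the complement of a union of blocks is again a union of blocks. Applying it to $c_m'$ and $(b-1)'$, whose intervening unprimed vertices are $L:=\{c_m+1,\dots,b-1\}$, yields the crucial equivalence $b-1\in B \iff L\text{ is }P\text{-closed} \iff$ no block of $P$ contains elements on both sides of the cut between $b-1$ and $1$.

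\textbf{The two directions.} For \emph{not both}, suppose $b-1\in B$. Then every gap is a contiguous interval, so any block $W\in P$ lies in a gap $G$ with $[\min(W),\max(W)]\subseteq G$. If $W$ were wrapping, its whole $\langle\rot^d\rangle$-orbit would lie in $[\min(W),\max(W)]\subseteq G$; but $\rot^d(W)\subseteq\rot^d(G)$ with $\rot^d(G)\ne G$, since a proper interval cannot be fixed by the nontrivial rotation $\rot^d$, a contradiction. Hence $P$ has no wrapping block. For \emph{at least one}, suppose $b-1\notin B$; then $(b-1)'\notin B$, so $L$ is not $P$-closed. A block of $P$ cannot straddle the gap boundary at $c_m'$, so the only possible failure is a block $W\subseteq U$ meeting both $L$ and $S:=\{1,\dots,c_1\}$, i.e.\ straddling the cut. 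Then $\min(W)\le c_1$ and $\max(W)\ge c_m+1$, so $[\min(W),\max(W)]\supseteq[b-1]\setminus U$. Every other block in the orbit of $W$ lies in a gap other than $g$, and those gaps exhaust $[b-1]\setminus U$; hence the entire orbit is contained in $[\min(W),\max(W)]$ and $W$ is a wrapping block. Combining the two directions gives the exclusive ``either/or''.

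\textbf{Main obstacle.} The first direction is routine once the gap picture is in place; the substance is the second. The delicate step is translating the positional hypothesis $b-1\notin B$ into the existence of a block straddling the cut, which is precisely where the $P$-closed characterization of $\krew(P)$ is used, and then confirming that this straddling block is genuinely \emph{wrapping} rather than merely spanning the cut. The latter rests on the observation that the gaps other than $g$ partition $[b-1]\setminus U$, so that no orbit-mate of $W$ can escape the interval $[\min(W),\max(W)]$; verifying this bookkeeping carefully is the crux of the proof.
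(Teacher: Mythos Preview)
Your proof is correct and takes a genuinely different route from the paper's. The paper argues entirely through the Dyck path $D$ with $\pi(D)=(P,Q)$: for the ``at least one'' direction it tracks the laser $\ell(f)$ fired from $f=\max(B)+d-(b-1)$, and derives a contradiction about vertical run lengths above $\max(B)$ (essentially invoking Proposition~\ref{minMax}); for ``not both'' it again chases lasers, showing that a wrapping $P$-block $A$ forces $\ell(b-d-1)$ to separate two elements of $\rot^{-d}(A)$. Your argument, by contrast, never touches the Dyck path. It uses only that $Q=\krew(P)$ and that both are $\rot^d$-invariant: the central $Q$-block cuts the interleaved disk into gaps on which $\langle\rot^d\rangle$ acts freely, and the Kreweras characterization ``$(b-1)'\in B$ iff $\{c_m+1,\dots,b-1\}$ is $P$-closed'' converts the hypothesis $b-1\notin B$ directly into the existence of a straddling block, which the gap bookkeeping then certifies as wrapping.

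What each buys: the paper's approach stays close to the path model that drives the rest of Section~5, so it meshes naturally with the surrounding rank/run arguments. Your approach is more structural and strictly more general --- it shows the lemma holds for \emph{any} $\rot^d$-invariant noncrossing $P$ with $Q=\krew(P)$, not just pairs arising from $a,b$-Dyck paths --- and it makes transparent why the dichotomy is really about whether the wraparound gap is a genuine interval. One small phrasing point: your ``$\iff$ no block of $P$ contains elements on both sides of the cut between $b-1$ and $1$'' is imprecise (every block lies in $[b-1]$); what you actually use, and what is correct, is that $L$ not $P$-closed forces a block meeting both $L$ and $[b-1]\setminus L$, which by the gap constraint must in fact meet both $L$ and $S$. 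The argument itself is sound.
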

\begin{proof}
Let $(P,Q) \in NC_d(a,b)$.  First we will show that at least one of the two things must happen.  Suppose toward a contradiction that $Q$ contains a central block $B$ such that $b-1 \notin B$ and that $P$ contains no wrapping blocks.  Let $D$ be the Dyck path such that $\pi(D) = (P,Q)$.  Figure \ref{centralQ} shows a simplified version of what $D$ might look like.  Then $f = \max(B) + d - (b-1) \in B$ and $\ell(f)$ must hit the east step immediately following $\max(B)$ in $D$.  Since $f$ fires a laser and is not the maximal element of a $Q$ block, the vertical run above $f$ (boldened in Figure \ref{centralQ}) consists of at least $a/b$ north steps.  This implies that $f+1$ is the minimal element of a block of $P$.  By $d$-fold symmetry, this means that $\max(B) + 1$ is the minimal element of a block of $P$.  In particular, $\max(B) + 1 \leq b-1$ and the block containing $\max(B) + 1$ cannot be wrapping.  Thus, $\max(B) + 1$ is immediately preceded by a vertical run (the bold vertical run above $\max(B)$ in Figure \ref{centralQ} of length greater than $a/b$.   On the other hand, $\max(B)$ is the maximal element of a $Q$ block so it must be the bottom of a vertical run of length less than $a/b$, a contradiction. 

\begin{figure}[h]
\includegraphics[scale=.4]{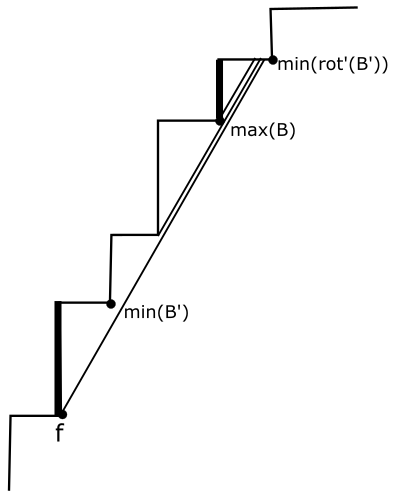}
\caption{}
\label{centralQ}
\end{figure}

Now suppose that $b-1 \in B$ and $P$ contains a wrapping block $A$.  Since $P$ and $Q$ are mutually noncrossing we must have that $b-1 \in A$, and no other element of $\{b-d, \ldots, b-1\}$ is in $A$. By $d$-fold symmetry, $b-1-d$ is the minimal element of the $P$ block $A' = \rot^{-d}(A)$, and also in $B$.  Since $A$ is wrapping, it necessarily contains some element of $\{1, \ldots, d\}$, which implies that $A'$ contains a second smallest element $k$ of $\{b-d, \ldots, b-1\}$.  However, $k \neq b-1$ since $P$ and $Q$ are mutually noncrossing.  Since $b-d-1$ is in $B$, $\ell(b-d-1)$ hits the east step immediately following $b-1$.  This necessarily separates the label $k$ from the label $b-d-1$, contradicting the fact that they are in the same block of $P$.
\end{proof}

\begin{lemma}\label{p1}
Suppose $(P,Q) \in NC_d(a,b)$ and $P$ contains a central block containing 1.  Then $Q$ can contain no wrapping blocks.  
\end{lemma}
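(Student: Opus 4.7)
The plan is to argue by contradiction using mutual noncrossing together with the forced structure of the central $P$-block. Let $B$ denote the central $P$-block containing 1, and write $b-1 = md$. Since $B$ is $\rot^d$-invariant and contains 1, it must contain the full orbit $\{1, d+1, 2d+1, \ldots, b-d\}$. These $m$ forced elements cut $\{1, \ldots, b-1\}$ into $m$ disjoint sectors: the $m-1$ \emph{inner} sectors $[jd+2, (j+1)d]$ for $0 \leq j \leq m-2$, together with the \emph{tail} sector $[b-d+1, b-1]$.

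Now suppose toward contradiction that $C \in Q$ is wrapping. Since $P$ and $Q$ are mutually noncrossing by Proposition \ref{muNonCrossing}, I would first show that $C$ lies entirely in a single sector. Indeed, if $c_1 < c_2$ are elements of $C$ in distinct sectors, with $c_1$ in an inner sector indexed by some $j \leq m-2$, then $1, 1+(j+1)d \in B$ produce the forbidden interleaving $1 < c_1 < 1+(j+1)d < c_2$ with $\{1, 1+(j+1)d\} \subseteq B$ and $\{c_1, c_2\} \subseteq C$. The only remaining possibility, both $c_1$ and $c_2$ in the tail sector, is not an instance of \emph{distinct} sectors, so the claim follows. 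Hence $[\min(C), \max(C)] \subseteq S$ for a single sector $S$.

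Next, applying $\rot^d$ shifts each element by $d$ modulo $b-1$. A direct check handles the two cases: if $S$ is the inner sector indexed by $j$, then $\rot^d(C)$ lies in the next inner sector (or in the tail sector when $j = m-2$); and if $S$ is the tail sector, then $\rot^d(C)$ wraps around to lie in $[2, d]$. In every case $\rot^d(C) \cap S = \emptyset$, so $\rot^d(C) \not\subseteq [\min(C), \max(C)]$. This contradicts the hypothesis that $C$ is wrapping, since wrapping requires every element of the $\langle \rot^d \rangle$-orbit of $C$ to sit inside $[\min(C), \max(C)]$.

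The main technical care lies in the sector bookkeeping, especially the asymmetric role of the tail sector, which is bounded by only a single forced element of $B$ and whose image under $\rot^d$ must be tracked via the wrap-around into $[2, d]$.
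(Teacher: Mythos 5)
Your strategy---deriving the contradiction purely from mutual noncrossing of $C$ with the forced orbit $\{1, d+1, \ldots, b-d\} \subseteq B$---breaks down at the claim that $[\min(C), \max(C)] \subseteq S$ for a single sector $S$. Your interleaving argument only shows that the elements of $C$ lying \emph{in} sectors all lie in the same sector; it says nothing about elements of $C$ that coincide with the forced points $1, d+1, \ldots, b-d$ themselves. Mutual noncrossing permits a block of $Q$ to share a vertex with a block of $P$ (the paper explicitly allows intersections at vertices), and the paper's own proof shows that a hypothetical wrapping $Q$-block here is in fact \emph{forced} to contain the shared vertex $1$ --- so the case you omit is the only one that actually occurs. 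Concretely, take $C = \{1, c\}$ with $c$ in the tail sector: no interleaving with $\{1, d+1, \ldots, b-d\}$ arises (four distinct points would require an element of $C$ below $1$ or an element of $B$ strictly between two tail elements), yet $[\min(C), \max(C)] = [1, c]$ contains the whole $\langle\rot^d\rangle$-orbit of $C$, so $C$ is wrapping and your final shift-by-$d$ step yields no contradiction. Indeed, $P = \{\{1,3,5\},\{2\},\{4\},\{6\}\}$ and $Q = \{\{1,6\},\{2,3\},\{4,5\}\}$ (with $d=2$, $b-1=6$) is a mutually noncrossing, $\rot^2$-stable pair with a central $P$-block containing $1$ and a wrapping $Q$-block; hence no argument using only mutual noncrossing can prove the lemma.

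To close the gap you must invoke more of the structure of $NC_d(a,b)$. One repair in the spirit of your argument uses $Q = \krew(P)$ (Lemma \ref{krewCom}): the chord of $B$ joining $jd+1$ to $(j+1)d+1$ separates the primed vertex $(jd+1)'$ from $((j+1)d+1)', \ldots$, so the $Q$-block containing a forced point $jd+1$ is trapped in $[jd+1,(j+1)d]$; combined with your sector analysis, every $Q$-block then sits inside an interval of at most $d$ consecutive integers, after which $\rot^d(C) \cap [\min(C),\max(C)] = \emptyset$ does follow (using $2d \leq b-1$). The paper instead argues on the Dyck path: it first deduces $1 \in C$, then compares the vertical-run lengths above $\max(C)$ and its $d$-fold translates against $a/b$ to reach a contradiction in each of the cases $\max(C) < b-1$ and $\max(C) = b-1$.
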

\begin{proof}
Let $(P,Q) \in NC_d(a,b)$ where $P$ contains a wrapping block with 1, and suppose toward a contradiction that $Q$ contains a wrapping block $B$. Since $P$ and $Q$ are mutually noncrossing we must have $1 \in B$, but no other element of $\{2, \ldots, d\}$ is in $B$. Let $D$ be the Dyck path such that $\pi(D) = (P,Q)$.  There are two cases to consider:

\textbf{Case 1:} $\max(B) < b-1$

$D$ must contain a vertical run of length less than $a/b$ above $\max(B)$, which implies that $\max(B) + 1$ is not the minimal element of a block of $P$.  By $d$-fold symmetry, this implies that $\max(B) + d - (b-1) + 1$ is not the minimal element of a $P$ block. Let $B' = \rot^d(B)$.  Then $f = \max(B) + d - (b-1)$ is a nonmaximal element of $B'$ so $f$ fires a laser in $D$ and is at the bottom of a vertical run of length at least $a/b$.  This implies $f+1$ is a minimal element of a block of $P$, a contradiction.

\textbf{Case 2:} $\max(B) = b-1$

Since $b-1$ and $1$ are in the same block of $Q$, by $d$-fold symmetry we must have that $d$ and $d+1$ are in the same block of $Q$. Thus $d$ fires a laser in $D$ and is at the bottom of a vertical run of length at least $a/b$.  However, that would imply that $d+1$ is the minimal element of a block of $P$, but by symmetry $d+1$ is in the central block of $P$ whose minimal element is 1, a contradiction. 
\end{proof}

\begin{proposition}
Every $\rot$-orbit in $NC_d(a,b)$ contains at least one noble partition.
\end{proposition}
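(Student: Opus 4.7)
My plan is to perform a case analysis on which of $P$ or $Q$ contains a central block. The key preliminary observation (going beyond the authors' remark that there is \emph{at most} one central block between $P$ and $Q$) is that in fact at least one central block always exists, so \emph{exactly} one does. I would establish this geometrically: the convex hulls of the blocks of $P$ partition the disk into regions, and by Lemma~\ref{krewCom}, each region corresponds to a block of $Q = \krew(P)$. The center of the disk lies either (a) inside some $P$-block's convex hull, in which case $\rot^d$ (which fixes the center) would carry this block to another whose convex hull also contains the center, forcing the two blocks to coincide (since distinct blocks in a noncrossing partition have convex hulls with disjoint interiors), making the block central in $P$; or (b) inside a region disjoint from all $P$-hulls, which is then $\rot^d$-invariant, so the corresponding $Q$-block is central.

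In the first case, I would pick any $x \in B$ (where $B$ is the central block of $P$) and apply the rotation $\rot^{1-x}$ to $(P,Q)$, placing $1$ in the image of $B$. Property (2) then holds by construction, and property (3) is vacuous by the preliminary. Lemma~\ref{p1} supplies property (1) for $Q$. For property (1) on $P$, I would use an arc-length estimate: because $B$ is $\rot^d$-invariant and contains $1$, it contains $\{1, 1+d, \ldots, 1+(k-1)d\}$, and by noncrossing every non-central block of $P$ lies entirely within a single arc between consecutive elements of $B$. By $\rot^d$-invariance, no such arc can contain $d$ consecutive positions (else an element of $B$ would fall inside it), so any non-central block $B_1$ satisfies $\max(B_1) - \min(B_1) \leq d-2$. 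Then $\rot^d(B_1)$ either sits entirely to the right of $\max(B_1)$ or, in the modular wraparound case, consists of small-valued elements below $\min(B_1)$; in either case it falls outside $[\min(B_1), \max(B_1)]$, so no wrapping occurs.

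In the second case, I would instead pick $x \in B$ (where $B$ is the central block of $Q$) and apply $\rot^{b-1-x}$, placing $b-1$ in the image of $B$. Property (3) is then immediate, property (2) is vacuous, and Lemma~\ref{centralQb} (the ``not both'' clause) supplies property (1) for $P$. For property (1) on $Q$, I would run the same arc-length argument as in Case 1, this time applied to arcs between consecutive elements of the rotated central block of $Q$.

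The main obstacle I anticipate is the rigorous justification of the existence of a central block, specifically handling degenerate geometric configurations such as a chord of $P$ passing through the center (possible only when $k = 2$, in which case the chord is itself $\rot^d$-invariant and hence central). The remaining arguments — the rotation choice in each case, the invocation of Lemmas~\ref{p1} and~\ref{centralQb}, and the arc-length estimate — are then routine.
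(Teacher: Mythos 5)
Your proof is correct, and for the two central-block cases it runs essentially parallel to the paper's: rotate the central block so that it contains $1$ (if it lies in $P$) or $b-1$ (if it lies in $Q$), invoke Lemma~\ref{p1} or the ``not both'' clause of Lemma~\ref{centralQb} to rule out wrapping blocks in the other partition, and use noncrossingness against the central block to rule out wrapping blocks in its own partition — your arc-length estimate ($\max(B_1)-\min(B_1)\leq d-2$ for non-central blocks, since the central block meets every window of $d$ consecutive positions) is a correct and more explicit version of the paper's one-line assertion that noncrossingness forbids wrapping here. Where you genuinely depart from the paper is in the third case: the paper allows for the possibility that neither $P$ nor $Q$ has a central block and handles it by a greedy argument (rotate until a given wrapping block first unwraps, observe no new wrapping blocks appear, and iterate), whereas you prove that this case is vacuous — the center of the disk lies in exactly one region of the $P$-hull/Kreweras-region decomposition, that region is $\rot^d$-stable, and hence exactly one central block always exists (note this requires counting the trivial rank-$0$ blocks of $Q$ as blocks, which is consistent with the paper's definition of central). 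Your structural claim is true and is arguably of independent interest, but it is also the costlier route: you must justify the identification of complementary regions with blocks of $\krew(P)$ (a standard fact, but not stated in the paper) and dispose of the degenerate configurations where the center sits on a diameter chord, which you correctly flag as the delicate point. The paper's greedy unwrapping avoids all of that geometry at the price of a slightly hand-wavier termination argument. Both routes are sound; just be sure that in writing up the preliminary you handle the boundary cases with the care you promise, since that is where the only real work beyond the paper's argument lies.
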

\begin{proof}
If $P$ contains a central block, rotate it so that it contains 1.  Since $P$ itself is noncrossing, there are no wrapping blocks in $P$.  By Lemma \ref{p1}, there can be no wrapping $Q$ blocks. If $Q$ contains a central block, rotate it so that it contains $b-1$.  As before, since $Q$ is noncrossing, it cannot contain any wrapping blocks.  By Lemma \ref{centralQb}, there can be no wrapping $P$ blocks. Now assume that there is no central block, and suppose that either $P$ or $Q$ contains a wrapping block $B$.  Rotate $(P,Q)$ until the first time $B$ is no longer wrapping.  The result of this rotation cannot introduce any new wrapping blocks, so we have decreased the total number of wrapping blocks by at least 1.  Continue in this way until no wrapping blocks in either $P$ or $Q$ remain. 
\end{proof}

Let $(S_P^d, S_Q^d)$ be a good sequence pair.  Let $s = \sum_{i=1}^d p_i + q_i$ and $c = a - s(b-1)/d$.  We call $(S_P^d, S_Q^d)$ \emph{very good} if 
$c = 0$,  
if $p_1 = 0$ and $c > a/b$, 
or if $q_d = 0$ and $ 0 < c < a/b$. 
Define a map 
\[ L : \{\mbox{very good sequences}\} \to \{ \mbox{lattice paths from (0,0) to } (b,a)\}\]
as follows. If $(S_P^d, S_Q^d)$ is a very good sequence pair, let $L(S_P^d, S_Q^d)$ be determined as follows.  

\textbf{Case 1:} If $c=0$, 
\[\mbox{If $p_1 = 0$, set } L(S_P^d, S_Q^d) = (N^{\max(p_2, q_1)}E \cdots N^{\max(p_d, q_{d-1})}EN^{q_d}E)^{(b-1)/d}E.\]
\[\mbox{If $q_d = 0$, set } L(S_P^d, S_Q^d) = (N^{p_1}EN^{\max(p_2, q_1)}E \cdots N^{\max(p_d, q_{d-1})}E)^{(b-1)/d}E.\]

\textbf{Case 2:} If $c > a/b$, set
\[ L(S_P^d, S_Q^d) = N^{c}E(N^{\max(p_2, q_1)}E \cdots N^{\max(p_d, q_{d-1})}EN^{q_d}E)^{(b-1)/d}\]


\textbf{Case 3:} If $0 < c < a/b$, set
\[ L(S_P^d, S_Q^d) = (N^{p_1}EN^{\max(p_2, q_1)}E \cdots N^{\max(p_d, q_{d-1})}E)^{(b-1)/d}N^{c}E\]


We define a very good sequence pair $(S_P^d, S_Q^d)$ to be \emph{noble} if $L(S_P^d, S_Q^d)$ is an $a,b$-Dyck path.

\begin{lemma}\label{goodRotNoble}
Every good sequence pair is $\rot$-conjugate to at least one noble sequence. 
\end{lemma}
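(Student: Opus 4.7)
The plan is to proceed in two stages. Stage~1 rotates to make the sequence very good. Stage~2 then produces, via a cycle-lemma argument, a very good rotation whose path $L(S_P^d, S_Q^d)$ is a genuine $a,b$-Dyck path, hence noble.

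\emph{Stage 1.} The sum $s = \sum_{i=1}^d (p_i + q_i)$ and therefore $c = a - s(b-1)/d$ are invariant under cyclic rotation. Coprimality of $a$ and $b$ combined with $d \mid (b-1)$ and $d < b-1$ rule out $c = a/b$, since this would give $s = ad/b$ and force $b \mid d$, impossible. Thus $c$ lies in one of three ranges. If $c > a/b$, then $s < ad/b$, so not every $p_i$ can exceed $a/b$; some $p_i = 0$ and we rotate it into position~$1$. If $0 < c < a/b$, then $s > ad/b$, and if every $q_i$ were nonzero the adjacency constraint would force every $p_i = 0$, giving $s = \sum q_i < da/b$, a contradiction; hence some $q_i = 0$ and we rotate it into position~$d$. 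If $c = 0$, then $s = ad/(b-1) > ad/b$ and the same argument as the previous case produces a zero $q_i$ to rotate into position~$d$, so the second formula of Case~1 in the definition of $L$ applies.

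\emph{Stage 2.} Fix a very good rotation. Its path $L$ consists of an optional initial or terminal segment $N^c E$ together with $(b-1)/d$ repetitions of a fundamental block of $d$ units $N^{u_i}E$, where each $u_i$ is determined by the relevant pair among $(p_i, q_{i-1})$ according to the formula. A direct calculation shows block-boundary deficits $y - ax/b$ form an arithmetic progression with common difference $T = s - da/b$, so the Dyck condition for the whole path reduces to a threshold inequality on the within-block prefix sums $V_j = \sum_{i \le j}(u_i - a/b)$ at the block with smallest starting deficit. Viewing the $d$ cyclic rotations of $(u_1, \ldots, u_d)$ as the $d$ rotations of $(S_P^d, S_Q^d)$, the classical cycle lemma for real sequences supplies one satisfying the threshold: rotate so that the new starting index is $k^*$, where $V_{k^*}$ is minimal. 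Coprimality and $d < b$ force all $V_j$ to be pairwise distinct, so $k^*$ is unique and the descent $V_{k^*} < V_{k^* - 1}$ gives $u_{k^*} < a/b$.

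It remains to verify the cycle-lemma optimal rotation is itself very good. Since nonzero $p_i$ exceed $a/b$ while all $q_i$ lie strictly below $a/b$, the inequality $u_{k^*} < a/b$ forces the $p$-component of $u_{k^*}$ to vanish. In Cases~1 and~2 this means the rotation starting at $k^*$ places a zero $p_i$ at position~$1$, matching the very-good condition; in Case~3 the symmetric cycle-lemma argument applied to the unit at the end of a block places a zero $q_i$ at position~$d$. The main obstacle will be the bookkeeping needed to align the three formulas for $L$ (depending on which range contains $c$) with the correct indexing of units and of $(p_i, q_i)$; once this is done carefully, the Dyck property follows from the cycle lemma and the monotonicity of block-boundary deficits.
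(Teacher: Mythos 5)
Your proof is correct and follows essentially the same route as the paper: your prefix-sum deficits $V_j$ are (up to the factor $b$) exactly the paper's weight labels $by-ax$ on the lattice points of the doubled path $(N^{s_1}E\cdots N^{s_d}E)^2$, your unique minimizer $k^*$ is the paper's unique minimal-weight point $P_0$, and the same case split on $c$ versus $a/b$ decides whether the short unit $u_{k^*}$ is placed at position $1$ (forcing $p_1=0$) or at position $d$ (forcing $q_d=0$). Your Stage~1 is logically redundant --- the paper skips it and reads off very-goodness directly from the descent/ascent at the minimizer --- but it does no harm.
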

\begin{proof}
Let $(S_P^d, S_Q^d)$ be a good sequence pair and $S^d_{P,Q} = (s_1, \ldots, s_d)$ be such that $s_i = \max(p_i, q_{i-1})$ where we interpret $q_0$ as $q_d$.  It will be convenient to also have a map $\gamma$ which reverses this as follows:
\[ \gamma(s_1, \ldots, s_d) = (S_P^d, S_Q^d)\]
where $p_i = s_i$ if $s_i > a/b$ and 0 otherwise, and $q_i = s_{i+1}$ if $s_{i+1} < a/b$ and 0 otherwise, interpreting $s_{d+1}$ as $s_1$.  

\textbf{Case 1:} $c=a$.  

In this case $s_1, \ldots, s_d$ is the zero sequence $(0,0, \ldots, 0)$ and $L(S_P^d, S_Q^d)$ is the valid Dyck path $N^{a}E^b$. 

\textbf{Case 2:} $a/b < c < a$.  

Let $L$ be the lattice path which starts at the origin and ends at $(2d, 2(s_1 + \cdots + s_d))$ given by
\[ L = N^{s_1}EN^{s_2}E \cdots N^{s_d}EN^{s_1}EN^{s_2}E \cdots N^{s_d}E.\]
Label the lattice points $P$ on $L$ with integers $w(P)$ as follows:  Label the origin 0.  If $P$ and $P'$ are consecutive lattice points, set $w(P') = w(P) - a$ if $P'$ is connected to $P$ by an $E$-step, and $w(P') = w(P) + b$ if $P'$ is connected to $P$ with an $N$-step.

By coprimality, there exists a unique lattice point on $L$ of minimal weight, $P_0$. Observe that by minimality, and the fact that $(s_1, \ldots, s_d)$ is not the zero sequence, $P_0$ must be immediately followed by a vertical run $N^{s_i}$ for some $1 \leq i \leq d$. Note: If $P_0$ is the terminal point of $L$ then we interpert the vertical run to be $N^{s_1}$.  

 If $i = 1$, then the entire path stays above the line $y = \frac{a}{b}x$ and it is clear that $L(S_P^d, S_Q^d)$ is a valid Dyck path.  Now suppose $i > 1$ and let $S = (s_{i-1}, s_i, \ldots, s_d, s_1, \ldots, s_{i-2})$.  The vertical run $N^{s_{i-1}}$ over the point $A_0$, immediately preceding $P_0$, must have height at most $a/b$.  Otherwise $A_0$ would have smaller weight, contradicting minimality. Thus $\gamma(S)$ is a very good sequence pair, so that $L(\gamma(S))$ makes sense.  We claim that $L(\gamma(S))$ is in fact a valid Dyck path so that $\gamma(S)$ is a noble sequence pair.  Consider the segmentation $L(\gamma(S)) = L_1\cdots L_{(b-1)/d}E$ where $L_i$ contains $d$ $E$ steps.  Since each segment is progressively further east, it will suffice to show that the final segment stays west of the line $y = \frac{a}{b}x$.  Since $(S_P^d, S_Q^d)$ is a good sequence pair, the copy of $P_0$ in $L_q$ stays west of the line $y=\frac{a}{b}x$. Since $P_0$ is minimal, no other point to the east of $P_0$ can cross the line $y=\frac{a}{b}x$.  Finally, since the vertical run immediately preceding $P_0$ has height at most $a/b$, we conclude that all of $L_q$ stays west of the line $\frac{a}{b}x$. 

\textbf{Case 3:} $0 \leq c < a/b$. 

Define $L$ as in Case 2, letting $P_0$ denote the lattice point of minimal weight.   $P_0$ is beneath a vertical run $N^{s_i}$ where $s_i > a/b$ since otherwise the point immediately following $P_0$ would be of smaller weight.  Let $S = (s_i, s_{i+1}, \ldots, s_d, s_1, \ldots, s_{i-1})$.  Since $s_i > a/b$, $\gamma(S)$ is a very good sequence pair so $L(\gamma(S))$ makes sense.  We claim that $L(\gamma(S))$ is a a valid Dyck path. To see this, consider the segmentation $L(\gamma(S)) = L_1 \cdots L_{(b-1)/d} N^c E$.  Since $c < a/b$, the point labeled $b-1$ stays west of the line $y=\frac{a}{b}x$.  Each segment $L_i$ is progressively further east, so it will again suffice to show that $L_q$ remains west of the line $y=\frac{a}{b}x$.  Since $(S_P^d, S_Q^d)$ is a good sequence pair, the copy of $P_0$ in $L_q$ stays west of the line $y=\frac{a}{b}x$, and since $P_0$ is minimal, no other point east of $P_0$ can cross the line $y = \frac{a}{b}x$. 

\begin{figure}[h]
\includegraphics[scale=.5]{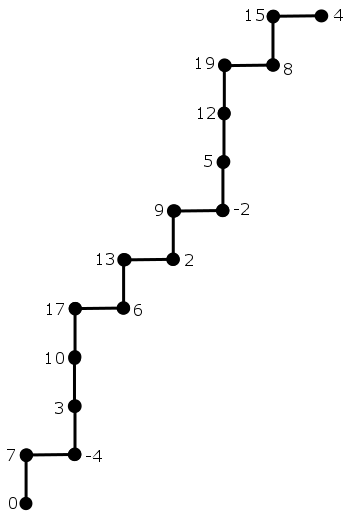}
\includegraphics[scale=.5]{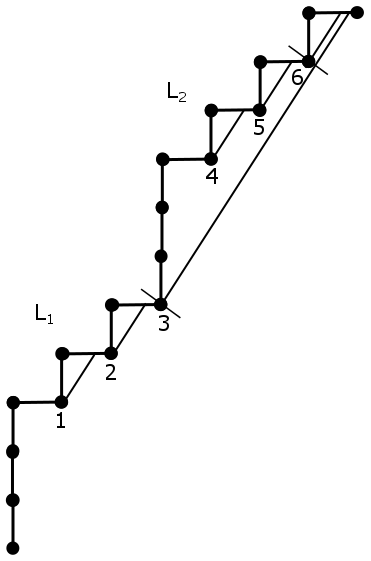}
\caption{}
\label{minP}
\end{figure}

For example, consider the good sequence pair $S_P^3 = (0,3,0)$ and $S_Q^3 = (0,1,1)$. Then we have $S_{P,Q}^3 = (1,3,1)$. The path on the left in Figure \ref{minP} shows the corresponding 11,7 lattice path $L$ with weight labels.  The point of minimal weight is labeled -4.  This appears at the bottom of a vertical run of length $3 > 11/7$.  Thus, $S = (3,1,1)$.  The path $L(\gamma(S))$ is shown on the right in Figure \ref{minP}.  The slashes indicate the segmentation into $L_1$ and $L_2$.  The final vertical run is $N^c = N^1$, and the result is a valid Dyck path. 
\end{proof}

\begin{lemma}\label{4.7}
Suppose that $(S_P^d, S_Q^d)$ is a noble sequence pair.  Then $\pi := \pi(L(S_P^d, S_Q^d)) \in NC_d(a,b)$ is noble and the $d$-modified $P$ and $Q$ rank sequences of $\pi$ are $S_P^d$ and $S_Q^d$. 
\end{lemma}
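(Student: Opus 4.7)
Let $D := L(S_P^d, S_Q^d)$, which is a valid $a,b$-Dyck path by hypothesis, and set $(P,Q) := \pi(D)$. There are three things to verify: (a) $(P,Q) \in NC_d(a,b)$; (b) $(P,Q)$ is noble; (c) the $d$-modified rank sequences of $(P,Q)$ coincide with $(S_P^d, S_Q^d)$. My plan is to establish (a) and (c) uniformly across the three cases in the definition of $L$, then tackle (b) case by case.

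For (a), I will observe that the formula for $L$ exhibits $D$ as $(b-1)/d$ identical periodic subwords of east-width $d$, together with at most one extra vertical run $N^c$ at the very beginning (Case 2) or end (Case 3). In each case $D$ is invariant under $\rot'^d$ (where $\rot'$ is the path-rotation operator of Section 4.1), so iterating Proposition \ref{rotPreserve} gives $\rot^d(\pi(D)) = \pi(D)$, placing $(P,Q)$ in $NC_d(a,b)$, with ranks preserved by Proposition \ref{rankRot}. For (c), I will read off the vertical run lengths of $D$: the good-sequence constraints $p_i \in \{0\}\cup(a/b,\infty)$, $q_j \in [0,a/b)$, and the no-adjacency condition $p_{i+1}q_i=0$ guarantee that every nonzero vertical run of $D$ outside the possible central $N^c$ is unambiguously either a $P$-rise of length $p_i$ preceding label $i+kd$ (contributing rank $p_i$ to a $P$-block with minimum $i+kd$) or a $Q$-rise of length $q_j$ above label $b-1-d+j+kd$ (contributing rank $q_j$ to a $Q$-block with maximum $b-1-d+j+kd$), which are precisely the entries of $(S_P^d, S_Q^d)$.

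For (b), I will work through the three cases. Case 1 ($c=0$) will follow from strict periodicity: no wrapping blocks arise, and any central $P$- or $Q$-block must contain $1$ or $b-1$ respectively because of the subcase structure ($p_1 = 0$ or $q_d = 0$). In Case 2 ($c>a/b$), the initial $N^c$ is a $P$-rise, so label $1$ is the minimum of a $P$-block $B$ of rank $c$. My key computation will use the identity $c = a - s(b-1)/d$, which under the hypothesis $c > a/b$ rearranges to $s < da/b$, where $s$ is the rise per period; this forces the laser $\ell(1)$ to lie strictly above the path at horizontal position $1+d$, so $\ell(1)$ terminates within the first period. Hence label $1+d$ sits east of the termination of $\ell(1)$ and lies in $B$; iterating, $B$ contains $\{1, 1+d, \ldots, b-d\}$ and is central. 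Every other $P$-block has its $\langle\rot^d\rangle$-orbit confined within a single period and so is nonwrapping; Lemma \ref{p1} then rules out wrapping $Q$-blocks. Case 3 ($0<c<a/b$) will be handled symmetrically, with the terminal $N^c$ yielding a central $Q$-block containing $b-1$ and Lemma \ref{centralQb} ruling out wrapping $P$-blocks.

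The hard part will be the laser-tracing computation in Cases 2 and 3: quantitatively comparing the laser slope $a/b$ against the periodic rise rate of $D$ to conclude that the putative central block really is a single block rather than a $\rot^d$-orbit of smaller ones. The critical inequality $c > a/b \Leftrightarrow s < da/b$ in Case 2 (and its symmetric analog for Case 3) provides exactly the margin required to force the periodic copies of the candidate block to coalesce.
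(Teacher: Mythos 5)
Your skeleton (establish $\rot^d$-invariance, identify the central block, read off the rank sequences, split on the sign of $c-a/b$) matches the paper's, and your inequality $c>a/b\Leftrightarrow s<da/b$ is correct and is indeed the quantitative heart of the matter. But there are two genuine gaps. First, step (a) rests on the assertion that the periodic word structure of $D=L(S_P^d,S_Q^d)$ makes $D$ invariant under $\rot'^d$. That is not an observation: $\rot'$ is not a cyclic shift of the path word but a surgery that relocates segments according to where the laser from the westernmost valley lands, and since $\pi$ is injective, the claim $\rot'^d(D)=D$ is \emph{equivalent} to the conclusion ``$\pi(D)$ is $\rot^d$-invariant'' that you want to derive from it. The paper instead proves $\rot^d$-invariance directly at the level of lasers: for each nonzero $s_i$, the lasers fired from (or the east steps hit after) the copies of $N^{s_i}$ in consecutive periods $L_j$ and $L_{j+1}$ are rigid translates of one another — or, when $0<c<a/b$, both hit the terminal east step — so the blocks they cut out are related by $i\mapsto i+d$. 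That one argument simultaneously yields invariance, the absence of wrapping blocks, and, because $p_1=0$ when $c>a/b$ (resp.\ $q_d=0$ when $0<c<a/b$), that the complement of the union of these translated blocks is a single central block containing $1$ (resp.\ $b-1$).

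Second, your identification of the central block in Case 2 by tracing $\ell(1)$ does not go through as stated. Label $1$ fires a laser only if the vertical run atop $x=1$, namely $\max(p_2,q_1)$, is nonzero; when it is zero there is no $\ell(1)$ to trace, yet $1$ is still the minimum of the rank-$c$ block. Even when $\ell(1)$ exists, $s<da/b$ shows only that $\ell(1)$ terminates on some east step over $[k,k+1]$ with $k\le d$, which places $k+1$ — not necessarily $d+1$ — into the block $B$ containing $1$; to conclude $d+1\in B$ you would still have to rule out that some laser $\ell(j)$ with $2\le j\le d$ survives past $x=d+1$ and separates $1$ from $d+1$. The paper sidesteps this entirely by deducing centrality of $B$ from the $\rot^d$-stability of the set of blocks \emph{not} containing $1$. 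Your appeal to Lemmas \ref{p1} and \ref{centralQb} to exclude wrapping blocks of the opposite type is fine once invariance and the central block are in hand, and your reading of the $d$-modified rank sequences off the vertical runs is correct; it is the two foundational steps above that need the paper's translation-of-lasers argument rather than the shortcuts you propose.
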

\begin{proof}
Define $S^d_{P,Q} = (s_1, \ldots, s_d)$ as in the proof of Lemma \ref{goodRotNoble} and let $c = a - q(s_1 + \cdots + s_d)$. The argument splits into cases: 

\textbf{Case 1:} $c > a/b$.

Let $L:= L(S_P^d, S_Q^d) = L_1L_2 \cdots L_qE$ where $L_1 = N^cEN^{s_2}E \cdots N^{s_d}E$ and $L_i = N^{s_1}E\cdots N^{s_d}E$ for $2 \leq i \leq q$. Since $(S_P^d, S_Q^d)$ is very good and $c > a/b$ we must have that the first entry of $S_P^d$ is 0.  Fix any index $1 \leq i \leq d$ such that $s_i > 0$ and any other index $1 \leq j \leq q-1$.  Both segments $L_j$ and $L_{j+1}$ of $L(s)$ contain a copy of the nonempty vertical run $N^{s_i}$.  First suppose $s_i > a/b$ and let $P_0$ and $P_1$ denote the points at the bottom of these vertical runs.  We have that $\ell(P_0)$ and $\ell(P_1)$ are rigid translations of one another, so the $P$ block visible from the copy of $N^{s_i}$ in $L_{j+1}$ is the image of the block visible from the copy of $N^{s_i}$ in $L_j$ under the operator $\rot^d$.  Now suppose $s_i < a/b$ and let $E_0$ and $E_1$ denote the east steps immediately following the vertical runs in $L_j$ and $L_{j+1}$.  The collection of lasers which hit $E_1$ are a rigid translation of the lasers which hit $E_0$, which implies that the $Q$ block determined by the lasers hitting $E_1$ is the image of the $Q$ block which results from lasers hitting $E_0$ in $L_j$ under the operator $\rot^d$.

Since the first entry of $S_P^d$ is 0 none of these blocks contain 1,  so the set of blocks not containing 1 is stable under $\rot^d$, which implies that the block containing 1 must be central, $\pi$ is invariant under $\rot^d$, and $\pi$ has no wrapping blocks.  Thus, $\pi$ is noble and the $d$-modified $P$ and $Q$ rank sequences of  $\pi$ are $S_P^d$ and $S_Q^d$. 

\textbf{Case 2:} $0 \leq c < a/b$.

As before, consider the segmentation $L(S_P^d, S_Q^d) = L_1 \cdots L_q N^c  E$ where $L_i = N^{s_1}E \cdots N^{s_d}E$.  Since $(S_P^d, S_Q^d)$ is very good and $0 < c < a/b$ we must have that the last entry of $S_Q^d$ is 0.  In this case, with the extra east step at the end of the path, lasers fired from the points at the bottom of consecutive copies of the vertical run $N^{s_i}$ in $L_j$ and $L_{j+1}$ are either 
\begin{enumerate}
\item translates of each other or 
\item they both hit $L$ on its terminal east step.  
\end{enumerate}
As described in case 1, $\rot^d$ invariance is guaranteed for all $P$ and $Q$ blocks determined by (1) and the fact that every such laser pair satisfies (1) or (2) implies there can be no wrapping blocks.  If $c = 0$ then there is no central block so we conclude $\pi$ is noble.  If $0 < c < a/b$ then all points $P$ such that $\ell(P)$ hits $L$ on its terminal east step are in a central $Q$ block containing $b-1$ so $\pi$ is noble in this case as well, and the $d$-modified $P$ and $Q$ rank sequences of  $\pi$ are $S_P^d$ and $S_Q^d$. 
\end{proof}

\begin{lemma}\label{4.8}
Suppose that $(P,Q) \in NC_d(a,b)$.  Then $(P,Q)$ is noble if and only if $(S_P^d, S_Q^d)$ is noble.
\end{lemma}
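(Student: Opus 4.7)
The plan is to prove the biconditional in both directions, treating each separately.

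For the forward direction, suppose $(P,Q)$ is noble. I would first compute the quantity $c := a - s(b-1)/d$, where $s = \sum_{i=1}^d (p_i + q_i)$, by organizing the total rank sum $a$ according to $\langle \rot^d \rangle$-orbits of blocks. Nobility prohibits wrapping blocks, so by Proposition \ref{rankRot} each non-central orbit consists of $(b-1)/d$ blocks of a common rank, and exactly one representative per orbit is captured in $(S_P^d, S_Q^d)$; summing yields $s\cdot (b-1)/d$ for the non-central contribution, so $c$ equals the rank of the central block (or $0$ if none). A short case analysis then verifies very-goodness: absence of a central block gives $c=0$; a central $P$-block must contain $1$ by nobility, yielding $p_1 = 0$ and $c > a/b$; a central $Q$-block must contain $b-1$ by nobility, yielding $q_d = 0$ and $c < a/b$. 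To finish, I would show $L(S_P^d, S_Q^d)$ coincides with the Dyck path $D := \pi^{-1}(P,Q)$. The $\rot^d$-invariance together with the no-wrapping condition forces the full rank sequences of $(P,Q)$ to be $d$-periodic outside the boundary positions occupied by any central block, and substituting into the expression $D = N^{p_1^{\text{full}}}EN^{\max(p_2^{\text{full}},q_1^{\text{full}})}E\cdots N^{q_{b-1}^{\text{full}}}E$ and matching against each case in the definition of $L$ yields $L = D$. Since $D$ is an $a,b$-Dyck path by construction, $(S_P^d, S_Q^d)$ is noble.

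For the backward direction, suppose $(S_P^d, S_Q^d)$ is noble. By Lemma \ref{4.7}, the partition $(P^*, Q^*) := \pi(L(S_P^d, S_Q^d))$ lies in $NC_d(a,b)$, is noble, and has $d$-modified rank sequences equal to $(S_P^d, S_Q^d)$. It then suffices to prove $(P,Q) = (P^*,Q^*)$, after which $(P,Q)$ inherits nobility. I would establish this equality by a uniqueness argument on Dyck paths: the path $\pi^{-1}(P,Q)$ is determined by its vertical run sequence, and combining the $d$-modified data with the total rank constraint $\sum_{B} \rank(B) = a$ and the periodicity forced by $\rot^d$-invariance reconstructs the same vertical run sequence as $L(S_P^d, S_Q^d) = \pi^{-1}(P^*, Q^*)$, so the two Dyck paths (and hence the two partitions) coincide.

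The main obstacle is the uniqueness step in the backward direction, since a priori two different elements of $NC_d(a,b)$ could produce the same $d$-modified sequences when wrapping or misplaced central blocks absorb information the sequences do not record. The key is to leverage that Lemma \ref{4.7} already supplies a specific noble realization of $(S_P^d, S_Q^d)$ and then show that no other element of $NC_d(a,b)$ can share these sequences without agreeing with $(P^*,Q^*)$. Concretely, this requires pinning down the initial and terminal vertical runs $r_1 = p_1^{\text{full}}$ and $r_b = q_{b-1}^{\text{full}}$ from the very-good case distinction, and then propagating the periodicity $p_i^{\text{full}} = p_{\bar i}^{\text{full}}$ and $q_i^{\text{full}} = q_{\bar i}^{\text{full}}$ through the interior of the path using $\rot^d$-invariance.
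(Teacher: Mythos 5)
Your forward direction is essentially the paper's argument: no wrapping blocks forces the full rank sequence $R(P,Q)$ to be the $d$-periodic pattern built from $(S_P^d,S_Q^d)$ with the central rank $c$ at the appropriate end, the three-way case analysis on the central block ($c=0$; central $P$-block containing $1$, so $p_1=0$ and $c>a/b$; central $Q$-block containing $b-1$, so $q_d=0$ and $c<a/b$) gives very-goodness, and $L(S_P^d,S_Q^d)$ coincides with the actual Dyck path $\pi^{-1}(P,Q)$, hence is valid. That half is fine.

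The backward direction has a genuine gap, and it is exactly the obstacle you flag yourself without resolving. You reduce to showing that $(P,Q)$ equals the noble realization $(P^*,Q^*)$ supplied by Lemma \ref{4.7}, and you propose to do this by reconstructing the vertical run sequence of $\pi^{-1}(P,Q)$ from ``the periodicity forced by $\rot^d$-invariance.'' But $\rot^d$-invariance alone does \emph{not} make the full rank sequences $d$-periodic: that periodicity is equivalent to the absence of wrapping blocks (together with correct placement of the central block), which is precisely the nobility of $(P,Q)$ that you are trying to prove. A $\rot^d$-invariant pair with, say, a wrapping $Q$-block has a rank sequence whose nonzero positions are not stable under shifting by $d$, so the $d$-modified data plus invariance do not determine it, and your reconstruction never gets off the ground. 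The paper avoids this circularity by arguing the contrapositive directly: assuming $(S_P^d,S_Q^d)$ is noble but $(P,Q)$ is not, it first disposes of the central-block configurations using Lemmas \ref{p1} and \ref{centralQb} (and the $P$-side argument imported from the $a<b$ case), reducing to the case of a wrapping $Q$-block; it then runs a laser argument inside the concrete path $L=L_1L_2\cdots L_qN^cE$ showing that the laser fired from the top of the wrapping block would force a laser from the origin to hit an east step of $L_1$, contradicting the fact that $L$ stays above $y=\frac{a}{b}x$. Some argument of this geometric type (turning non-nobility of the partition into a violation of the Dyck-path condition for $L$) is needed; your proposal does not contain one.
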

\begin{proof}
First suppose $(P,Q)$ is noble.  Let $S^d_{P,Q} = (s_1, \ldots, s_d)$, $c = a - \frac{b-1}{d}(s_1 + \cdots + s_d)$, $S_P = (p_1, \ldots, p_{b-1})$, and $S_Q = (q_1, \ldots, q_{b-1})$.  Since $(P,Q)$ contains no wrapping blocks,

\[ R(P,Q) = \begin{cases} (s_1, s_2, \ldots, s_d, s_1, \ldots, s_d, \ldots, s_1, \ldots, s_d, c) &\mbox{ if }c < a/b \\ (c, s_2, \ldots, s_d, s_1, \ldots, s_d, \ldots, s_1, \ldots, s_d, s_1) &\mbox{ if } c > a/b.\end{cases}\]
If $c = 0$ then $(S^d_P, S^d_Q)$ is automatically very good.  If $0 < c < a/b$ then the nobility of $(P,Q)$ implies $b-1$ is contained in the central block of $(P,Q)$.  Thus $q_d = 0$ so $(S^d_P, S^d_Q)$ is very good.  On the other hand, if $c > a/b$ then the nobility of $(P,Q)$ implies that 1 is in the central block so that $p_1 = 0$ and $(S^d_P, S^d_Q)$ is very good.  In both cases the vertical runs of $L(S^d_P,S^d_Q)$ agree with the rank sequence $R(P,Q)$, so $(S^d_P, S^d_Q)$ is noble.

Now suppose that $(P,Q)$ is not noble. The argument in the proof of Lemma 4.8 in \cite{RatCat} tells us that $P$ contains no wrapping blocks, and if it has a central block then it must contain 1.  If $P$ contains a central block with 1 then by Lemma \ref{p1} $Q$ cannot contain a wrapping block.  Thus, we may assume $P$ contains no central or wrapping blocks, which means $p_1 \neq 0$.  Since $(S_P, S_Q)$ is noble this means that $L:= L(S_{P,Q}^d) $ takes the form $ L_1L_2\cdots L_qN^cE$ where $L_i = (N^{p_1}EN^{\max(p_2, q_1)}E \cdots N^{\max(p_d, q_{d-1})}E$.  Suppose $Q$ contains a wrapping block $B$.  Let $f = \min(B)$ and $g = \max(B)$.  Since $B$ is wrapping, we have $1 \leq f \leq d$ and $b-d \leq g \leq b-1$.  Let $B'$ denote the inverse $d$-fold rotation of $B$. Then $\max(B') = b-d+f$ so that the $f^{th}$ entry of $S_Q^d$ is $\rank(B)$. Let $g'$ be the copy of $g$ contained in $L_1$ and $f'$ denote the copy of $f$ contained in $L_2$.  Then $\ell(g')$ hits the east step immediately following the vertical run above $f'$.  Let $P_0$ be the first point of $L_2$.  Since $g' \leq d \leq P_0$ and $\ell(g')$ hits an east step in $L_2$, this implies that $\ell(P_0)$ hits an east step in $L_2$ as well.  However, $L_1$ is a copy of $L_2$, so if we fire a laser from the initial point of $L_1$, the origin, then it must hit an east step of $L_1$, contradicting the fact that $L$ stays above the line $y=\frac{a}{b}x$. 

Finally, suppose $Q$ contains a central block which does not contain $b-1$.  By Lemma \ref{centralQb} this implies that $P$ contains a wrapping block so that $(S_P^d, S_Q^d)$ is not noble, a contradiction.
\end{proof}

\begin{theorem} The map $S^d : NC_d(a,b) \to \{ \mbox{good sequence pairs } (S^d_P, S^d_Q)\}$ is a bijection which commutes with the action of rotation.
\end{theorem}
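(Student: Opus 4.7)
The plan is to first verify that $S^d$ is well-defined (its image lies in good sequence pairs), and then combine the rotation-equivariance from Lemma \ref{dRot} with nobility as a canonical form in each orbit to produce an inverse map.

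For well-definedness, given $(P,Q) \in NC_d(a,b)$, I need to check the four conditions of a good sequence pair. Conditions one and two ($p_i \in \{0\} \cup (a/b, \infty)$ and $q_i < a/b$) follow immediately from the definitions of $P$-rises and $Q$-rises and the way blocks are ranked. For the sum condition, observe that each $\langle \rot^d \rangle$-orbit of a noncentral, nonwrapping $P$ block contributes to $\sum_i p_i$ exactly once: the mins of the blocks in the orbit form a single residue class modulo $d$ in $[b-1]$, and exactly one representative lands in $[d]$. The analogous statement holds for $Q$-orbits, recorded via maxes in $\{b-d, \ldots, b-1\}$. Consequently $((b-1)/d)\bigl(\sum_i p_i + q_i\bigr)$ equals the total rank from all noncentral, nonwrapping blocks, which together with the (at most one) central block rank sums to $a$, yielding $\sum_i p_i + q_i \leq ad/(b-1)$. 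The no-consecutive-nonzero condition reduces to Proposition \ref{minMax} after using $\rot^d$-invariance to shift a $Q$ max from $b - 1 - d + i$ down to $i$, making it adjacent in $[b-1]$ to a $P$ min at $i+1$; the wrap-around case $(p_1, q_d)$ requires one additional $\rot^d$-shift to force the $Q$ max and $P$ min into adjacency.

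For bijectivity, the strategy is reduction to the noble case. Lemma \ref{dRot} shows $S^d$ commutes with rotation, so $S^d$ descends to a map between $\rot$-orbits. Every orbit in $NC_d(a,b)$ contains a noble element (by the proposition preceding Lemma \ref{goodRotNoble}), and every orbit of good sequence pairs contains a noble one (Lemma \ref{goodRotNoble}). Lemma \ref{4.8} asserts that $(P,Q)$ is noble if and only if $(S_P^d, S_Q^d)$ is noble, so $S^d$ restricts to a map between the noble subsets. Lemma \ref{4.7} provides a candidate inverse $(S_P^d, S_Q^d) \mapsto \pi(L(S_P^d, S_Q^d))$ on noble elements. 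One composition is Lemma \ref{4.7} verbatim; the other follows because the proof of Lemma \ref{4.8} already shows that for noble $(P,Q)$ the vertical run sequence of $L(S_P^d, S_Q^d)$ agrees with $R(P,Q)$, so $L(S_P^d, S_Q^d)$ is the Dyck path of $(P,Q)$ and hence $\pi \circ L$ is the identity on noble pairs.

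With the noble-case bijection and rotation equivariance in hand, the full bijection extends easily. For surjectivity, given $(S_P^d, S_Q^d)$, use Lemma \ref{goodRotNoble} to choose $m$ making $\rot^m(S_P^d, S_Q^d)$ noble, apply $\pi \circ L$ to obtain a noble partition, then pull back by $\rot^{-m}$. For injectivity, if $S^d(P_1,Q_1) = S^d(P_2,Q_2)$, choose the same $m$ and rotate both partitions; by equivariance the images are noble and equal, so the noble-case injectivity gives $\rot^m(P_1,Q_1) = \rot^m(P_2,Q_2)$, whence $(P_1,Q_1) = (P_2,Q_2)$. The main obstacle is synchronizing the two sides in the extension step: a partition side orbit may have several noble representatives, but choosing $m$ first on the sequence side (where Lemma \ref{goodRotNoble} applies) and transporting by equivariance sidesteps this issue. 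A secondary technical nuisance is the wrap-around index in the no-consecutive-nonzero check of well-definedness, addressed by the additional $\rot^d$-argument described above.
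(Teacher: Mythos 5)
Your argument is correct and follows essentially the same route as the paper's proof: rotation-equivariance via Lemma \ref{dRot}, reduction to the noble case via Lemma \ref{goodRotNoble} and Lemma \ref{4.8}, inversion on noble elements via Lemma \ref{4.7} together with the rank-sequence determination from the proof of Lemma \ref{4.8}, and extension to all of $NC_d(a,b)$ by equivariance. Your explicit well-definedness check (that the image of $S^d$ actually consists of good sequence pairs) is a worthwhile addition that the paper leaves implicit, but it does not change the overall strategy.
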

\begin{proof}
By Lemma \ref{dRot}, $S^d$ commutes with rotation.  Now suppose $(S_P^d, S_Q^d)$ is a good sequence pair.  By Lemma \ref{goodRotNoble} it is conjugate to a noble sequence pair $(S_{P'}^d, S_{Q'}^d)$. By Lemma \ref{4.7} there exists $(P',Q') \in NC_d(a,b)$ such that $(P',Q')$ is noble and $S^d(P',Q') =  (S_{P'}^d, S_{Q'}^d)$.  As described in the proof of Lemma \ref{4.8}, this completely determines the rank sequence, and hence the vertical run sequence, of $(P',Q')$, which uniquely determines the partition. Therefore $(P',Q')$ is unique.  Since $S^d$ commutes with rotation, there must be a unique rotated partition pair which is the inverse image of $(S_P^d, S_Q^d)$, proving that $S^d$ is a bijection.
\end{proof}

Now we can enumerate the good sequence pairs.  To do this, begin by combining the pairs of sequences into a single sequence $(s_0, s_1, \ldots, s_d)$ of length $d+1$ as follows:

\[ s_i = \begin{cases}  \max(p_{i+1}, q_i) \mbox{ if } 1 \leq i \leq d-1 \\ \max(p_1, q_d) \mbox{ if } i=d \end{cases}\]

This transformation is a bijection onto the set of nonnegative integer sequences of length $d$ whose entries sum to at most $ad/(b-1)$, which are counted by 
\[ { \lfloor ad/(b-1)\rfloor+ d \choose d}.\]

\begin{corollary}\label{numDInv}
Let $a$ and $b$ be coprime positive integers and $d | (b-1)$.  The number of $(P,Q) \in NC(a,b)$ which are invariant under $\rot^d$ is given by
\[ { \lfloor ad/(b-1)\rfloor+ d \choose d}.\]
\end{corollary}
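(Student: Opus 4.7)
The plan is to chain the bijection $S^d$ established in the preceding theorem together with the encoding map sketched in the paragraph just before the corollary, and then finish with a stars-and-bars count. Concretely, the theorem identifies $NC_d(a,b)$ with the set of good sequence pairs $((p_1,\ldots,p_d),(q_1,\ldots,q_d))$, so it suffices to count good sequence pairs.

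Next I would set $\Phi((p_1,\ldots,p_d),(q_1,\ldots,q_d)) := (s_1,\ldots,s_d)$ with $s_i = \max(p_{i+1},q_i)$ (indices on $p$ taken modulo $d$) and check that $\Phi$ is a bijection onto the set of nonnegative integer sequences of length $d$ whose entries sum to at most $ad/(b-1)$. Injectivity is the place to be careful: the defining conditions of a good sequence pair force each $p_j$ to lie in $\{0\}\cup(a/b,\infty)$, each $q_j$ to lie in $[0,a/b)$, and at most one of $p_{i+1},q_i$ to be nonzero for every $i$. Since the intervals $(a/b,\infty)$ and $(0,a/b)$ are disjoint, the value $s_i$ alone determines both $p_{i+1}$ and $q_i$: if $s_i=0$ both are zero; if $s_i>a/b$ then $p_{i+1}=s_i$ and $q_i=0$; if $0<s_i<a/b$ then $q_i=s_i$ and $p_{i+1}=0$. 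Surjectivity is handled by running this recipe in reverse to build a preimage and checking each clause of the definition of good; the sum condition transports cleanly because the pairs $\{p_{i+1},q_i\}$ are pairwise disjoint contributors, so $\sum_i p_i + \sum_i q_i = \sum_i s_i$.

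Finally, I would invoke the standard stars-and-bars identity: the number of nonnegative integer sequences $(s_1,\ldots,s_d)$ with $\sum_i s_i \leq \lfloor ad/(b-1)\rfloor$ equals the number of nonnegative integer solutions to $s_0+s_1+\cdots+s_d = \lfloor ad/(b-1)\rfloor$, which is $\binom{\lfloor ad/(b-1)\rfloor + d}{d}$. Composing $S^d$ with $\Phi$ and applying this count yields the corollary. The argument is short overall; the only step that requires any care is confirming the trichotomy on $s_i$ used to invert $\Phi$, and that is handed to us by the interval separation built into the definition of a good sequence pair.
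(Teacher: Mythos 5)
Your proposal is correct and follows the paper's own route exactly: the paper also composes the bijection $S^d$ with the map $(S_P^d,S_Q^d)\mapsto(s_1,\ldots,s_d)$, $s_i=\max(p_{i+1},q_i)$, and finishes with stars and bars; your explicit verification of the trichotomy $s_i=0$, $0<s_i<a/b$, $s_i>a/b$ (which is exhaustive since $a/b\notin\Z$ by coprimality) is precisely the detail the paper leaves implicit when it asserts the transformation is a bijection.
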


\begin{corollary}\label{numKDInv}
Let $a$ and $b$ be coprime positive integers and $d | (b-1)$. Let $p$ be a nonnegative integer such that $\frac{b-1}{d}p \leq a$. The number of $(P,Q) \in NC_d(a,b)$ with a central block in either $P$ or $Q$ and $p$ orbits of non-central blocks under the action of $\rot^d$ is 
\[ {d \choose p} {\lfloor \frac{ad}{b-1}\rfloor - 1 \choose p}.\]
The number of $(P,Q) \in NC_d(a,b)$ with no central block and $p$ orbits of noncentral blocks under the action of $\rot^d$ is
\[ \begin{cases}  {d \choose p} {\lfloor \frac{ad}{b-1}\rfloor - 1 \choose p - 1} & \mbox{ if } \frac{b-1}{d} | a \\
0 & \mbox{ if } \frac{b-1}{d} \not| a.\end{cases}\]
\end{corollary}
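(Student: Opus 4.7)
The plan is to leverage the bijection $S^d$ from the preceding theorem between $NC_d(a,b)$ and good sequence pairs, composed with the further bijection (immediately preceding Corollary~\ref{numDInv}) that collapses $(S_P^d, S_Q^d)$ into a single length-$d$ nonnegative integer sequence $(s_1, \ldots, s_d)$ with $\sum_i s_i \leq \lfloor ad/(b-1) \rfloor$. Under this composite correspondence, each nonzero entry $s_i$ arises from a unique positive-rank entry of either $S_P^d$ or $S_Q^d$ (the two cannot simultaneously be positive, by the fourth condition in the definition of a good sequence pair), and each such entry records a single $\langle \rot^d \rangle$-orbit of $(b-1)/d$ non-central, non-wrapping blocks in the noble representative supplied by Lemma~\ref{4.7}. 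Hence the number of nonzero entries in $(s_1, \ldots, s_d)$ equals the number of orbits of non-central blocks of $(P,Q)$.

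Next I would characterize when $(P,Q)$ has a central block directly in terms of the sequence. By the noble-form construction, the noble representative carries a central block of rank $c := a - \frac{b-1}{d}\sum_i s_i$ whenever $c \geq 1$ (placed in $P$ if $c > a/b$ and in $Q$ if $0 < c < a/b$), and has no central block exactly when $c = 0$. The condition $c = 0$ requires $\sum_i s_i = ad/(b-1)$, which admits an integer solution if and only if $\frac{b-1}{d} \mid a$. Consequently, when $\frac{b-1}{d} \nmid a$ every element of $NC_d(a,b)$ carries a central block, while when $\frac{b-1}{d} \mid a$ the no-central pairs are precisely those with $\sum_i s_i = \lfloor ad/(b-1) \rfloor$.

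With both statistics transported to the sequence side, the enumeration is a routine stars-and-bars computation. The number of length-$d$ nonnegative integer sequences with exactly $p$ nonzero entries summing to $k$ is $\binom{d}{p}\binom{k-1}{p-1}$, obtained by choosing the $p$ positions and then writing a composition of $k$ into $p$ positive parts. For the with-central count one sums over $k$ from $p$ up to $\lfloor ad/(b-1)\rfloor - 1$ (omitting the value $\lfloor ad/(b-1)\rfloor$, which corresponds to the no-central case precisely when $\frac{b-1}{d} \mid a$), and the hockey-stick identity $\sum_{k=p}^{N}\binom{k-1}{p-1} = \binom{N}{p}$ collapses the sum to $\binom{d}{p}\binom{\lfloor ad/(b-1)\rfloor - 1}{p}$. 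For the no-central count $k$ is fixed at $\lfloor ad/(b-1)\rfloor$, giving the composition count $\binom{d}{p}\binom{\lfloor ad/(b-1)\rfloor - 1}{p-1}$ directly.

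The main obstacle will be rigorously verifying the claimed bijection between $\langle \rot^d \rangle$-orbits of non-central blocks and nonzero entries of $(s_1, \ldots, s_d)$: an arbitrary rotation-invariant pair may contain wrapping blocks that obscure the orbit bookkeeping, so I would first use Lemmas~\ref{4.7} and~\ref{4.8} to pass to the noble representative, in which wrapping blocks are absent and the correspondence between orbits of non-central nontrivial blocks and nonzero sequence entries is immediate.
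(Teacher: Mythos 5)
Your overall strategy is exactly the one the paper intends (the corollary is stated without proof, but it is meant to follow from the bijection $S^d$ and the collapse to a single length-$d$ sequence, just as you describe): pass to a noble representative via Lemmas \ref{4.7} and \ref{4.8}, identify orbits of non-central positive-rank blocks with nonzero entries of $(s_1,\ldots,s_d)$ (using the fourth good-sequence condition so that $p_{i+1}$ and $q_i$ are never simultaneously positive), read off centrality from $c = a - \frac{b-1}{d}\sum_i s_i$, and finish with stars and bars. The reduction to noble representatives is legitimate because both statistics are rotation-invariant and, by Lemma \ref{dRot}, $S^d$ intertwines the two rotation actions, which preserve the number of nonzero entries.

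There is, however, a genuine gap at the final summation. Your own centrality criterion says a central block exists exactly when $c \geq 1$, i.e.\ when $\sum_i s_i < ad/(b-1)$; and you correctly observe that the value $k = \lfloor ad/(b-1)\rfloor$ corresponds to the no-central case \emph{only} when $\frac{b-1}{d} \mid a$. Yet in the computation you omit $k = \lfloor ad/(b-1)\rfloor$ from the central-block sum unconditionally. When $\frac{b-1}{d} \nmid a$, sequences with $\sum_i s_i = \lfloor ad/(b-1)\rfloor$ still give $c > 0$ and hence a central block, so your criterion yields $\binom{d}{p}\binom{\lfloor ad/(b-1)\rfloor}{p}$ in that case, not $\binom{d}{p}\binom{\lfloor ad/(b-1)\rfloor - 1}{p}$. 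A concrete check: for $a=3$, $b=5$, $d=2$ one has $\lfloor ad/(b-1)\rfloor = 1$ and $\frac{b-1}{d} = 2 \nmid 3$; the three elements of $NC_2(3,5)$ are $N^3E^5$ and the two rotations of $NENEENEE$, so there are $2$ elements with a central block and $p=1$ orbit of non-central blocks, whereas the displayed formula gives $\binom{2}{1}\binom{0}{1} = 0$. (Summing the two displayed quantities over all $p$ must recover $\binom{\lfloor ad/(b-1)\rfloor + d}{d}$ from Corollary \ref{numDInv}, and this only happens with the corrected top index in the non-divisible case.) So the discrepancy lies in the statement you were asked to prove rather than in your method: your argument, carried out consistently, proves the corrected formula, and you should either flag the statement or explain the truncation at $\lfloor ad/(b-1)\rfloor - 1$ --- which your analysis does not support. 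One smaller point worth making explicit: ``blocks'' here must mean positive-rank blocks (trivial rank-$0$ blocks of $Q$ are excluded), since only those appear as nonzero entries of the sequence; this matches the paper's convention of omitting trivial $Q$-blocks but deserves a sentence.
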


\begin{corollary}\label{multiKrew}
Let $a$ and $b$ be coprime positive integers and $d | (b-1)$.  Let $m_1, \ldots, m_a$ be nonnegative integers which satisfy $\frac{b-1}{d}(m_1 + 2m_2 + \cdots + am_a) \leq a$.  The number of $(P,Q) \in NC(a,b)$ which are invariant under $\rot^d$ and have $m_i$ orbits of noncentral blocks of rank $i$ under the action of $\rot^d$ is 

\[ {d \choose m_1, m_2, \ldots, m_a, d-m}\]
where $m = m_1 + m_2 + \cdots + m_a$.
\end{corollary}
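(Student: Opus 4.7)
The plan is to exploit the bijection established in the preceding theorem, namely that $S^d : NC_d(a,b) \to \{\text{good sequence pairs}\}$ is a rotation-equivariant bijection, together with the subsequent identification of good sequence pairs $(S_P^d, S_Q^d)$ with length-$d$ nonnegative integer sequences $(s_1, \ldots, s_d)$ satisfying $\sum s_i \leq ad/(b-1)$ via the substitution $s_i = \max(p_{i+1}, q_i)$ (indices read mod $d$). The whole statement reduces to counting such sequences with a prescribed profile of entries, so the task is to match up combinatorial parameters on both sides.

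The key bookkeeping step is to translate the counting parameter. A nonzero entry $s_j$ equals either $p_{j+1}$ or $q_j$: the former records the rank of a noncentral, nonwrapping block of $P$ whose minimal element lies in $[d]$, while the latter records the rank of a noncentral, nonwrapping block of $Q$ whose maximal element lies in $\{b-d, \ldots, b-1\}$. Since ranks are constant on $\rot^d$-orbits by Proposition \ref{rankRot}, and since each orbit of noncentral, positive-rank blocks in $P$ or $Q$ contains at most one wrapping block among its $(b-1)/d$ members, I would verify that every such orbit contributes exactly one nonzero entry to $(s_1, \ldots, s_d)$, the value of that entry being the shared rank of the orbit. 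The ambiguity about whether $s_j$ records a $P$- or $Q$-rank is resolved automatically by the good-sequence constraints $p_i > a/b$ and $q_i < a/b$: ranks strictly above $a/b$ come from $P$, ranks strictly below from $Q$, and coprimality of $a$ and $b$ rules out equality. Consequently the noncentral, positive-rank orbits of $(P,Q)$ stand in rank-preserving bijection with the nonzero entries of $(s_1, \ldots, s_d)$.

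Granted this correspondence, asking for $m_i$ orbits of rank $i$ for each $i \in \{1,\ldots,a\}$ becomes a multi-set arrangement: count sequences of length $d$ containing $m_i$ copies of $i$ and $d-m$ zeros. The number of such arrangements is exactly the multinomial coefficient $\binom{d}{m_1, m_2, \ldots, m_a, d-m}$, and the sum constraint $\sum_i i\, m_i \leq ad/(b-1)$ is precisely the hypothesis $\frac{b-1}{d}(m_1 + 2 m_2 + \cdots + a m_a) \leq a$. The main obstacle is the structural step: to verify rigorously that each orbit of noncentral, positive-rank blocks corresponds to exactly one nonzero entry in the combined sequence, requiring careful accounting of wrapping blocks together with the explicit reconstruction $\pi(L(S_P^d, S_Q^d))$ from Lemma \ref{4.7}, which shows how each nonzero entry re-emerges as the rank of a noncentral block in the resulting $(P,Q)$.
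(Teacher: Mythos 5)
Your proposal is correct and takes essentially the same route as the paper: the corollary is meant to follow immediately from the equivariant bijection $S^d$ onto good sequence pairs and their combination into a single length-$d$ sequence, with the multinomial counting sequences having $m_i$ entries equal to $i$. The paper leaves the bookkeeping implicit, and your verification (each orbit of noncentral positive-rank blocks contributes exactly one nonzero entry, with $P$- versus $Q$-ranks distinguished by the $a/b$ threshold) is exactly the intended argument.
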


\section{Cyclic Sieving}

Let $X$ be a finite set, $C = \langle c \rangle$ be a finite cyclic group acting on $X$, $X(q) \in \N[q]$ be a polynomial with nonnegative integer coefficents, and $\zeta \in \C$ be a root of unity with multiplicative order $|C|$.  The triple $(X, C, X(q))$ exhibits the \emph{cyclic sieving phenomenon} if for all $d \geq 0$ we have $X(\zeta^d) = |X^{c^d}| = |\{x \in X | c^d.x = x\}|$. For additional background and examples of the cyclic sieving phenomenon in other contexts, see \cite{CSP}.  

\begin{theorem}
Let $a$ and $b$ be coprime and $\mathbf{r} = (r_1, r_2, \ldots, r_a)$ be  sequence of nonnegative integers satisfying $r_1 + 2r_2 + \cdots + ar_a = a$.  Set $k = \sum_{i=1}^a r_i$.  Let $X$ be the set of $(P,Q) \in NC(a,b)$ with $r_i$ blocks of rank $i$, where a block may come from either $P$ or $Q$. Then the triple $(X, C, X(q))$ exhibits the cyclic sieving phenomenon, where $C = \Z_{b-1}$ acts on $X$ by rotation and 
\[ X(q) = \Krew_q(a,b,\mathbf{r}) = \frac{ [b-1]!_q}{[r_1]!_q \cdots [r_a]!_q[b-k]!_q}\]
is the $q$-rational Kreweras number.
\end{theorem}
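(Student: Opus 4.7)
The plan is to verify the defining identity $X(\zeta^d) = |X^{\rot^d}|$ for each divisor $d \mid (b-1)$, from which the general case follows since the fixed-point set depends only on $\gcd(d,b-1)$ and $X(q) \in \N[q]$ gives equal values at conjugate roots of unity. Fix $d \mid (b-1)$, write $e = (b-1)/d$, let $\omega := \zeta^d$ (a primitive $e$-th root of unity), and set $\alpha_i := \lfloor r_i/e \rfloor$ for each $i$.

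First I would compute $|X^{\rot^d}|$ by specializing Corollary~\ref{multiKrew}. The preliminary observation is an orbit-size dichotomy: every $\rot^d$-orbit of blocks in a partition $(P,Q) \in NC_d(a,b)$ has size either $1$ (a central block) or exactly $e$, because an orbit of intermediate size $e'$ with $1 < e' < e$ would yield $e'$ distinct blocks each fixed by $\rot^{de'}$, producing $e'$ central blocks for the smaller rotation $\rot^{de'}$ and contradicting the fact that a noncrossing partition admits at most one central block. Given the rank vector $\mathbf{r}$, this dichotomy forces exactly one of three residue patterns modulo $e$: (1) $e \mid r_i$ for all $i$, in which case there is no central block and $m_i = r_i/e$; (2) there is a unique index $j$ with $r_j \equiv 1 \pmod e$ and $r_i \equiv 0 \pmod e$ for $i \neq j$, in which case there is a central block of rank $j$ and $m_i = \alpha_i$; or (3) neither pattern holds, forcing $|X^{\rot^d}| = 0$. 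In both cases (1) and (2), Corollary~\ref{multiKrew} yields
\[
|X^{\rot^d}| \;=\; \binom{d}{\alpha_1,\, \alpha_2,\, \ldots,\, \alpha_a,\; d - \textstyle\sum_i \alpha_i}.
\]

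Next I would evaluate $X(\omega)$ via the two rewritings
\[
X(q) \;=\; \frac{1}{[b-k]_q}\binom{b-1}{k}_q\binom{k}{r_1,\ldots,r_a}_q \;=\; \frac{1}{[k]_q}\binom{b-1}{k-1}_q\binom{k}{r_1,\ldots,r_a}_q,
\]
applying the no-carry form of the $q$-Lucas theorem for $q$-multinomial coefficients at $\omega$. In case (1), use the first rewriting: $[b-k]_\omega = 1$ since $b-k \equiv 1 \pmod e$, and $q$-Lucas gives $\binom{b-1}{k}_\omega = \binom{d}{k/e}$ together with $\binom{k}{r_1,\ldots,r_a}_\omega = \binom{k/e}{\alpha_1,\ldots,\alpha_a}$, whose product rearranges to $\binom{d}{\alpha_1,\ldots,\alpha_a,\, d-k/e}$. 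In case (2), use the second rewriting: $[k]_\omega = 1$ since $k \equiv 1 \pmod e$, and $q$-Lucas gives $\binom{b-1}{k-1}_\omega = \binom{d}{(k-1)/e}$ and $\binom{k}{r_1,\ldots,r_a}_\omega = \binom{(k-1)/e}{\alpha_1,\ldots,\alpha_a}$, whose product rearranges to $\binom{d}{\alpha_1,\ldots,\alpha_a,\, d-(k-1)/e}$. In case (3), a residue mismatch violates the no-carry condition in at least one $q$-multinomial, forcing $X(\omega) = 0$. Thus $X(\omega) = |X^{\rot^d}|$ in every case, establishing the CSP.

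I expect the orbit-size dichotomy to be the main obstacle, since Corollary~\ref{multiKrew} is stated in terms of non-central orbits of the uniform size $e$, and its clean invocation depends critically on ruling out intermediate orbit sizes via the at-most-one-central-block principle; once this dichotomy is established the remainder of the proof is a case analysis that reduces to standard $q$-Lucas manipulations and multinomial rearrangements.
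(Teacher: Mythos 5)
Your proof is correct and follows essentially the same route as the paper: both reduce the statement to matching the fixed-point count of Corollary~\ref{multiKrew} against the root-of-unity evaluation of $\Krew_q(a,b,\mathbf{r})$, the only difference being that you carry out that evaluation explicitly via $q$-Lucas and the orbit-size dichotomy, whereas the paper cites the computation from Theorem~5.1 of \cite{RatCat}. The one ingredient you take on faith, as does the paper's own proof, is that $\Krew_q(a,b,\mathbf{r})$ actually lies in $\N[q]$.
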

\begin{proof}
The proof of Theorem 5.1 in \cite{RatCat} shows that $\Krew_q(a,b,\mathbf{r})$ is a polynomial in $q$ with nonnegative integer coefficients and evaluates to the multinomial coefficient given in Corollary \ref{multiKrew}.
\end{proof}

\begin{theorem}
Let $a$ and $b$ be coprime, $1 \leq k \leq a$, and $X$ be the set of $(P,Q) \in NC(a,b)$ with $k$ blocks in total. The triple $(X, C, X(q))$ exhibits the cyclic sieving phenomenon where $C = \Z_{b-1}$ acts on $X$ by rotation and 
\[ X(q) = \Nar_q(a,b,k) = \frac{1}{[a]_q} {a \brack k}_q {b-1 \brack k-1}_q\]
is the $q$-rational Narayana number.
\end{theorem}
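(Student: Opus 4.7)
The plan is to follow the standard template for proving a cyclic sieving phenomenon, parallel to the preceding Kreweras result: verify the polynomial lies in $\N[q]$, evaluate it at roots of unity via the Reiner--Stanton--White formula, compute the combinatorial fixed-point count using Corollary \ref{numKDInv}, and match the two.

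First I would cite or verify that $\Nar_q(a,b,k) \in \N[q]$; this is a standard fact about rational $q$-Narayana numbers. Then, for each $d \mid (b-1)$, set $e = (b-1)/d$ and let $\zeta$ be a primitive $(b-1)$-th root of unity, so that $\zeta^d$ is a primitive $e$-th root of unity. Using the Reiner--Stanton--White evaluation
\[ {n \brack k}_q \bigg|_{q = \omega} = \binom{\lfloor n/e \rfloor}{\lfloor k/e \rfloor} \]
(with the appropriate convention, valid when $\omega$ is a primitive $e$-th root of unity and the standard divisibility conditions on residues hold), I would reduce $\Nar_q(a,b,k)|_{q = \zeta^d}$ to an expression in ordinary binomials of $\lfloor a/e \rfloor$, $d = (b-1)/e$, and $\lfloor k/e \rfloor$ or $\lfloor (k-1)/e \rfloor$. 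Careful handling of the $[a]_q$ factor in the denominator is needed: it vanishes at $q = \zeta^d$ precisely when $e \mid a$, in which case the apparent pole is cancelled by a corresponding zero in one of the $q$-binomial numerators.

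On the combinatorial side, if $(P,Q) \in NC_d(a,b)$ has $k$ total blocks, then each $\langle \rot^d \rangle$-orbit of non-central blocks contributes exactly $e$ blocks. Two cases arise. \emph{No central block:} then $k = pe$ with $p$ the number of orbits, and by Corollary \ref{numKDInv} this requires $e \mid a$ and contributes $\binom{d}{p}\binom{\lfloor a/e \rfloor - 1}{p-1}$. \emph{Central block in $P$ or $Q$:} then $k = 1 + pe$ and Corollary \ref{numKDInv} contributes $\binom{d}{p}\binom{\lfloor a/e \rfloor - 1}{p}$. At most one case produces a nonzero count for a given $k$, unless $e=1$, in which case every element of $X$ is fixed and the two subcases partition $X$ by whether a central block is present.

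The main obstacle will be the bookkeeping required to match the Reiner--Stanton--White evaluation with this two-case fixed-point count; the residues of $a$ and $k$ modulo $e$ determine which terms survive on each side, and one must check agreement case by case. Finally, for $d$ not dividing $b-1$, $\rot^d$ acts on $X$ with the same fixed-point set as $\rot^{\gcd(d, b-1)}$, and $\zeta^d$ has the same multiplicative order as $\zeta^{\gcd(d, b-1)}$, so the general case reduces to the divisor case.
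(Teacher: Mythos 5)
Your proposal is correct and follows essentially the same route as the paper: the paper's proof simply cites Theorem 5.2 of \cite{RatCat} for the root-of-unity evaluation of $\Nar_q(a,b,k)$ and observes that it matches the fixed-point count of Corollary \ref{numKDInv}, which is exactly the computation you outline in more detail.
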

\begin{proof}
As can be read off from Theorem 5.2 in \cite{RatCat}, the root of untiy evaluation agrees with the formula given in Corollary \ref{numKDInv}.
\end{proof}

\begin{theorem}\label{basicCSP}
Let $a$ and $b$ be coprime, $X$ be the set of $(P,Q) \in NC(a,b)$ and
\[ X(q) = \Cat_q(a,b) = \frac{1}{[a+b]_q} {a+b \brack a,b}_q\]
be the $q$-rational Catalan number.  Then the triple $(X,C,X(q))$ exhibits the cyclic sieving phenomenon, where $C=\Z_{b-1}$ acts by rotation.
\end{theorem}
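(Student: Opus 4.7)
The plan is to reduce this theorem to the cyclic sieving result for the rational Narayana numbers established in the preceding theorem, rather than to attempt a direct root-of-unity evaluation of $\Cat_q(a,b)$. The crucial algebraic input is the $q$-analogue of the identity expressing the Catalan number as a sum of Narayana numbers,
\[
\Cat_q(a,b) \;=\; \sum_{k=1}^{a} \Nar_q(a,b,k),
\]
which is a $q$-Vandermonde identity on $q$-binomial coefficients and is verified in \cite{RatCat}; since it is a purely algebraic statement it does not rely on $a < b$. In particular $\Cat_q(a,b)$ lies in $\N[q]$, which is also recorded in \cite{RatCat}, so cyclic sieving makes sense for the triple $(X, C, X(q))$.

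First, I would partition $X = NC(a,b)$ as $X = \bigsqcup_{k=1}^{a} X_k$, where $X_k$ is the set of pairs $(P,Q) \in NC(a,b)$ with exactly $k$ blocks in total (counted in the same way as in the preceding theorem). Since $\rot$ shifts labels modulo $b-1$ and, by Proposition \ref{rankRot}, preserves both the block structure and the ranks of blocks of $P$ and $Q$, each subset $X_k$ is stable under the action of $C = \Z_{b-1}$ by rotation. Thus $|X^{\rot^d}| = \sum_{k=1}^{a} |X_k^{\rot^d}|$ for every $d \geq 0$.

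Second, I would apply the Narayana cyclic sieving theorem termwise. Let $\zeta \in \C$ be any primitive $(b-1)$-th root of unity and $d \geq 0$. Combining the summation identity with the previous theorem gives
\begin{align*}
\Cat_q(a,b)\big|_{q=\zeta^d}
&= \sum_{k=1}^{a} \Nar_q(a,b,k)\big|_{q=\zeta^d} \\
&= \sum_{k=1}^{a} \bigl|X_k^{\rot^d}\bigr|
\;=\; \bigl|X^{\rot^d}\bigr|,
\end{align*}
which is exactly the cyclic sieving condition.

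The main potential obstacle is the identity $\Cat_q(a,b) = \sum_k \Nar_q(a,b,k)$ itself: one must check that the formulas
$\Cat_q(a,b) = \frac{1}{[a+b]_q}\bigl[{a+b \atop a,b}\bigr]_q$
and
$\Nar_q(a,b,k) = \frac{1}{[a]_q}\bigl[{a\atop k}\bigr]_q\bigl[{b-1\atop k-1}\bigr]_q$
really do sum telescopically via $q$-Vandermonde, with no issue at the ``boundary'' terms. This is, however, formally the same identity used in the $a<b$ setting of \cite{RatCat}, so no new combinatorial input is required beyond the block-preservation already captured by Proposition \ref{rankRot}.
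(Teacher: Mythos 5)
The key algebraic input of your reduction --- the polynomial identity $\Cat_q(a,b) = \sum_{k=1}^{a} \Nar_q(a,b,k)$ --- is false, and this is a genuine gap. Take $(a,b)=(2,3)$: then $\Cat_q(2,3) = \frac{1}{[5]_q}{5 \brack 2,3}_q = \frac{[4]_q}{[2]_q} = 1+q^2$, while $\Nar_q(2,3,1) = \Nar_q(2,3,2) = 1$, so the sum of Narayana polynomials is the constant $2 \neq 1+q^2$. The reason is that the $q$-Vandermonde identity needed to telescope the sum reads $\sum_k {a \brack k}_q {b-1 \brack b-k}_q\, q^{k(k-1)} = {a+b-1 \brack b}_q$; the extra powers $q^{k(k-1)}$ are absent from $\sum_k \Nar_q(a,b,k)$, so the ``purely algebraic'' step you rely on does not go through, and I do not believe \cite{RatCat} asserts it. What \emph{is} true is the root-of-unity identity $\Cat_q(a,b)\big|_{q=\zeta^d} = \sum_k \Nar_q(a,b,k)\big|_{q=\zeta^d}$, but that is a consequence of both sieving results rather than an independent input you can feed into your argument: the exponents $k(k-1)$ are not generally divisible by $b-1$, so you cannot dismiss the correction factors at $q=\zeta^d$ either.

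Your combinatorial decomposition $X = \bigsqcup_k X_k$ and its $\rot$-stability are fine, and the argument can be repaired: evaluate each $\Nar_q(a,b,k)$ at $\zeta^d$ using the fixed-point counts of Corollary \ref{numKDInv}, and then sum the resulting \emph{ordinary} binomial coefficients by ordinary Vandermonde to recover ${\lfloor ad/(b-1)\rfloor + d \choose d}$; finally check that $\Cat_q(a,b)$ evaluates to this same quantity at $\zeta^d$. But at that point you are doing exactly what the paper does, which is to compare the root-of-unity evaluation of $\Cat_q(a,b)$ directly against the enumeration of $\rot^d$-invariant pairs in Corollary \ref{numDInv}, with no detour through the Narayana refinement.
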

\begin{proof}
This follows from the fact that $\Cat_q(a,b)$ evaluates to the expression given in Corollary \ref{numDInv} when we let $q \to e^{2\pi i d/(b-1)}$.
\end{proof}

The special case of $(a,b) = (n+1, n)$ was considered by Thiel in \cite{Marko}, and this instance of the cyclic sieving phenomenon was proven for this case.  A simple bijection relates $n+1, n$-Dyck paths (and therefore elements of $NC(n+1, n)$) to the noncrossing $(1,2)$-configurations, a variant of one of the hundreds of Catalan objects listed in Stanley's Catalan addendum. \cite{Stanley}.  For convenience, we reprint the relevant definitions here:
\begin{defn}(Thiel)
Call a subset of $[m]$ a \emph{ball} if it has cardinality 1 and an \emph{arc} if it has cardinality 2.  Define a (1,2)-configuration on $[m]$ as a set of pairwise disjoint balls and arcs. Say that a (1,2)-configuration $F$ has a crossing if it contains arcs $\{i_1, i_2\}$ and $\{j_1, j_2\}$ with $i_1 < j_1 < i_2 < j_2$.  If $F$ has no crossing, it is called noncrossing. Define $X_n$ to be the set of noncrossing (1,2)-configurations on $[n-1]$.
\end{defn}

\begin{proposition}
There is a bijection $\tau$ between $n+1, n$-Dyck paths and $X_n$ that commutes with the action of rotation. 
\end{proposition}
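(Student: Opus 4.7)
The plan is to define the bijection $\tau$ via the labeled pair $(P, Q) = \pi(D)$ associated to each $(n+1, n)$-Dyck path $D$, and then to verify commutativity with rotation using Proposition~\ref{rankRot}. Because $a/b = (n+1)/n$ lies strictly between $1$ and $2$, every $P$-block has rank at least $2$ and every nontrivial $Q$-block has rank exactly $1$, so $(P, Q)$ is rigidly determined by its block structure together with its rank assignments. Writing $p = |P|$ and letting $q$ be the number of nontrivial $Q$-blocks, the identity $R_P + q = n + 1$ combined with $R_P \geq 2p$ yields a nonnegative excess $e := n + 1 - 2p - q$, which will count the labels of $[n-1]$ absorbed by ``oversized'' $P$-blocks and hence left unused in the configuration.

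My definition of $\tau(D)$ would read off local features of $(P, Q)$: each $P$-block of size at least $2$ whose rank is the minimum permitted contributes an arc between its minimum and maximum; each singleton $P$-block of rank $2$ contributes a ball; each nontrivial $Q$-block contributes a ball at its maximum, unless that label already appears in an arc; and the remaining $e$ labels are unused. The resulting collection uses $2a + s = n - 1 - e$ labels, is pairwise disjoint, and is noncrossing by Proposition~\ref{muNonCrossing}.

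To prove $\tau$ is a bijection, I would describe the inverse explicitly: given $C \in X_n$, each arc becomes a minimum-rank size-$2$ $P$-block, each ball corresponds to either a singleton $P$-block of rank $2$ or a nontrivial $Q$-block (distinguished by the noncrossing context), and the residual rank budget $n + 1 - 2p - q$ is loaded onto the unique wrapping $P$-block containing label $1$. Using Lemmas~\ref{pMerge} and~\ref{qMerge}, this candidate $(P, Q)$ satisfies the hypotheses of Theorem~\ref{char} and therefore arises from a Dyck path in $NC(n+1, n)$. Since $|NC(n+1, n)| = C_n = |X_n|$, it would alternatively suffice to verify that $\tau$ is injective.

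Finally, to establish $\tau \circ \rot = \rot \circ \tau$: by Proposition~\ref{rankRot}, rotating $D$ cyclically shifts the labels of $(P, Q)$ by one modulo $n - 1$ while preserving every block rank. Since each feature used to define $\tau(D)$ is determined solely by local block and rank data at the corresponding label, the cyclic shift of labels produces exactly the cyclic shift of features in $\tau(D)$, matching the natural rotation action on $X_n$. The main obstacle is handling case~$(3)$ with $r = 1$ of the operator $\rot'$ from Proposition~\ref{rotPreserve}, which is unique to $a > b$ and relocates the initial vertical run past the laser from label $1$; I would need to trace carefully how the identity of the wrapping $P$-block changes under this move to verify that the arc/ball pattern of $\tau(D)$ shifts in lockstep with the configuration rotation.
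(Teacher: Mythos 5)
There is a genuine gap: the dictionary you propose between $(P,Q)$ and $(1,2)$-configurations is built from the blocks of $P$ (arcs from $\min(B)$ to $\max(B)$ of rank-$2$ blocks, balls at rank-$2$ singletons), whereas the configuration that actually works is read off from the laser set, i.e.\ from $Q=\krew(P)$: an arc $\{i,j\}$ for each two-element block of $\krew(P)$ (equivalently each laser $(i,j)$ with $i\neq j$) and a ball at $i$ for each rank-$1$ singleton of $Q$ (each laser $(i,i)$). Your version loses information and your counting claim already fails on small examples. Take the $7,6$-Dyck path $N^2EN^2ENEN^2EEE$. Here $\ell(D)=\{(1,5),(2,2),(3,4)\}$, $P=\{\{1\},\{2,3,5\},\{4\}\}$ with all ranks $2$, and $Q=\{\{1,5\}_0,\{2\}_1,\{3,4\}_0\}$. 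Your map produces the arc $\{2,5\}$, balls at $1$ and $4$, and discards the ball at $2$ because $2$ lies on an arc; label $3$ (an interior element of the size-$3$ block $\{2,3,5\}$) is left unused even though your excess is $e=(n+1)-2p-q=7-6-1=0$, so the identity $2a+s=n-1-e$ is violated. Worse, your proposed inverse (``each arc becomes a minimum-rank size-$2$ $P$-block'') sends the arc $\{1,5\}$ arising from the $7,6$-path $N^2EN^2ENENNENE$ (whose $P$-block is $\{1,4,5\}$) to the size-$2$ block $\{1,5\}$; one can check that no pair with $P$-block $\{1,5\}$ and blocks $\{2,3\},\{4\}$ satisfies the rank condition of Theorem~\ref{char}, so the composite is not the identity and does not even land in $NC(7,6)$. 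Finally, $P$-blocks of size at least $2$ whose rank exceeds the minimum contribute nothing at all in your scheme, which collapses distinct elements (e.g.\ $N^7E^6$ and $N^3EN^4E^5$ both yield the empty configuration under the natural reading of ``minimum permitted'').

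Two smaller points. First, the structural facts you isolate are correct and are exactly what the paper uses implicitly: for $(a,b)=(n+1,n)$ every $P$-block has rank at least $2$ and every nontrivial $Q$-block is a rank-$1$ singleton; what you additionally need, and what makes the laser-based map land in $X_n$, is that every block of $\krew(P)$ has at most two elements in this regime, so the arcs are genuinely pairwise disjoint. Second, your closing worry about retracing case $(3)$ with $r=1$ of $\rot'$ is unnecessary: Propositions~\ref{rotPreserve} and~\ref{rankRot} already guarantee that rotation acts on the labeled pair $(P,Q)$ by shifting labels modulo $n-1$ while preserving ranks, so any map defined purely from the labeled pair automatically commutes with rotation. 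The equivariance is the easy half; the bijection is where the argument must be repaired, and the repair is to replace your $P$-based reading rule with the laser (equivalently Kreweras-complement) reading rule.
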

\begin{proof}
Given an $n+1, n$-Dyck path $D$, define $\tau(D)$ as follows:  Read the labels from 1 to $n-1$. If the point labeled $i$ does not fire a laser, leave it unmarked in the $(1,2)$ configuration.  Otherwise, it fires a laser which hits an east step with left endpoint whose $x$-coordinate is $j$.  If $i = j$, decorate $i$ with a dot.  Otherwise, draw an arc from $i$ to $j$.  For the reverse map, note that there is a unique way to fire a laser from $i$ in such a way that it hits an east step with left endpoint having $x$-coordinate $j$.  To see that this commutes with rotation, it will be easiest to think in terms of noncrossing partitions.  In particular, given $(P,Q) \in NC(n+1, n)$ we obtain its corresponding $(1,2)$ configuration as follows:  For each block $B$ of $P$, draw an arc from $\min(B) - 1$ to $\max(B)$. If $\min(B) = 1$, draw an arc from $n-1$ to $\max(B)$.  Each nontrivial block of $Q$ will be a singleton $\{i\}$ of rank 1.   Draw a ball at $i$. For an $n+1, n$-Dyck path $D$, this construction bijects $\pi(D)$ to $\tau(D)$.  It follows that rotation of $(P,Q)$ simply rotates its associated $(1,2)$ configuration. 
\end{proof}

Hence, Theorem \ref{basicCSP} specializes to Thiel's result when $(a,b) = (n+1, n)$.  Figure \ref{thiel} shows an example of a 7,6-Dyck path and its corresponding noncrossing (1,2) configuration.  
\begin{figure}[h]
\includegraphics[scale=.5]{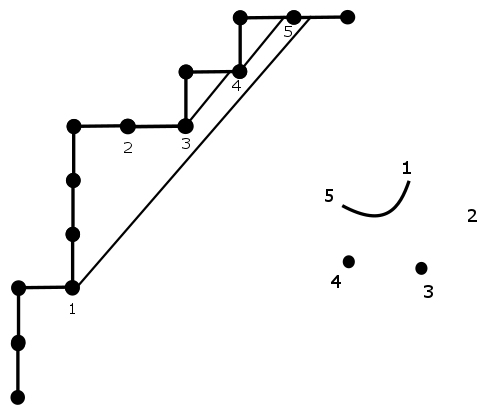}
\caption{A 7,6-Dyck path and its corresponding noncrossing (1,2) configuration in $X_6$}
\label{thiel}
\end{figure}

\section{Parking Functions}

Let $W$ be a finite Coxeter group with root lattice $Q$ and Coxeter number $h$.  In \cite{ParkingSpaces}, Armstrong, Reiner, and Rhoades defined \emph{$W$-parking functions}, a generalization of the classical type $A$ parking functions, as elements of $Q / (h+1)Q$ which carry an action of $W$ called the standard parking space.  They went on to show the $W$-parking functions in fact carry an action of $W \times C$ where $C$ is the cyclic subgroup of $W$ generated by a Coxeter element.  In \cite{ParkingStructures}, Rhoades defines $k-W$-parking spaces, a Fuss analog of their work.  In \cite{RatCat}, the author and Rhoades provide a rational extension $\Park^{NC}(a,b)$ of \cite{ParkingSpaces} and \cite{ParkingStructures} when $W$ is the symmetric group $\mathfrak{S}_a$.  Though rational parking functions have been studied elsewhere in the literature \cite{compShuffle}, the action of $\mathfrak{S}_a \times \Z_{b-1}$ on parking functions had previously only been known in the case $a < b$.  Here, we generalize to all coprime $a$ and $b$.  We begin with basic definitions.

For all coprime $a$ and $b$, we define an \emph{$a,b$-noncrossing parking function} as a pair $((P,Q), f)$ where $(P,Q) \in NC(a,b)$ and $f : \{B | B \in P \mbox{ or } B \in Q\} \to 2^{[a]}$ is a labeling of blocks of $P$ and $Q$ such that the following holds:
\begin{itemize}
\item $[a] = \bigsqcup_{B \in P \mbox{ or } B \in Q} f(B)$
\item for all blocks $B$ we have 
\[ |f(B)| = \begin{cases} \rank_P(B) & \mbox{ if $B \in P$}\\ \rank_Q(B) & \mbox{ if $B \in Q$.} \end{cases}\]
\end{itemize}
Alternatively, we can view this as a labeling of the $N$ steps of an $a,b$ Dyck path by the numbers 1 through $a$, where the labels increase as one moves up a vertical run.  We will refer to the set of all $a,b$-noncrossing parking functions as $\Park^{NC}(a,b)$.
\begin{proposition}\label{action}
$\Park^{NC}(a,b)$ carries an action of $\mathfrak{S}_a \times \Z_{b-1}$ where $\mathfrak{S}_a$ permutes block labels and $\Z_{b-1}$ rotates blocks.
\end{proposition}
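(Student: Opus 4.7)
The plan is to verify that the two announced actions are each well-defined and that they commute, whence they assemble into an action of the product group $\symm_a \times \Z_{b-1}$.

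First I would describe the $\symm_a$-factor. Given $\sigma \in \symm_a$ and a parking function $((P,Q), f) \in \Park^{NC}(a,b)$, set $\sigma \cdot ((P,Q), f) := ((P,Q), f^\sigma)$, where $f^\sigma(B) := \sigma(f(B))$ for every block $B$ of $P$ or $Q$. Since $\sigma$ is a bijection of $[a]$, the images $\{f^\sigma(B)\}_B$ still partition $[a]$, and $|f^\sigma(B)| = |f(B)|$ matches $\rank(B)$; associativity $(\sigma\tau) \cdot x = \sigma \cdot (\tau \cdot x)$ is then immediate from associativity of composition of permutations.

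Second I would treat the $\Z_{b-1}$-factor. Let $\rot$ denote the generator. Given $((P,Q), f)$, set $\rot \cdot ((P,Q), f) := ((\rot(P), \rot(Q)), f')$, where $f'(\rot(B)) := f(B)$. By Proposition \ref{rotPreserve} we have $(\rot(P), \rot(Q)) \in NC(a,b)$, and by Proposition \ref{rankRot} we have $\rank(\rot(B)) = \rank(B)$ for every block $B$ of $P$ or $Q$, so that $|f'(\rot(B))| = |f(B)|$ agrees with the required rank of $\rot(B)$. Since the rotation $\rot$ on $[b-1]$ has order $b-1$, iterating this construction $b-1$ times fixes every block setwise and restores the original parking function, so this is genuinely a $\Z_{b-1}$-action.

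Finally I would verify commutativity. The key observation is that $\sigma$ modifies only the values $f(B) \subseteq [a]$ of the labeling, while $\rot$ only permutes the domain (the blocks) of $f$. Concretely,
\[
\sigma \cdot \bigl(\rot \cdot ((P,Q), f)\bigr) = \bigl((\rot(P),\rot(Q)),\, \sigma \circ f \circ \rot^{-1}\bigr) = \rot \cdot \bigl(\sigma \cdot ((P,Q), f)\bigr),
\]
so the two actions combine into one of $\symm_a \times \Z_{b-1}$. The only nontrivial input is rank-preservation under rotation, which is already Proposition \ref{rankRot}; I expect this to be the main (though already dispatched) obstacle, since without it the transported labeling $f'$ would not meet the rank condition at the rotated blocks. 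The rest is a formal verification.
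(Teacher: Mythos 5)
Your proposal is correct and takes essentially the same route as the paper: the paper's proof is a one-line observation that rotation preserves vertical run lengths and hence block ranks (via Proposition \ref{rotPreserve}), which is exactly the "only nontrivial input" you identify and dispatch with Proposition \ref{rankRot}. The rest of your write-up is the formal verification that the paper leaves implicit.
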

\begin{proof}
Rotation preserves vertical lengths, and thus ranks, by the definition of $\rot$ and Proposition \ref{rotPreserve}. 
\end{proof}

We would like to state a character formula for the action described in Proposition \ref{action}.  Let $V = \C^a/\langle (1, \ldots, 1)\rangle$ be the reflection representation of $\symm_a$ and $\zeta = e^{\frac{2 \pi i}{b-1}}$.  Recall from \cite{Okada} that  the map $\phi: \mathfrak{S}_a \to \C$ by $\phi(w) = b^{\dim V^w}$ is a character of $\symm_a$ if and only if $\gcd(b, a) = 1$.  In particular, this suggests that we should have a single formula which doesn't depend on whether or not $a < b$.  Given $w \in \symm_a$ and $d \geq 0$, let $\mult_w(\zeta^d)$ be the multiplicity of $\zeta^d$ as an eignevalue in the action of $w$ on $V$.  With this notation, we have the following for the character $\chi$:

\begin{theorem}\label{park}
Let $w \in \symm_a$ and $g$ be a generator of $\Z_{b-1}$.  Then we have
\[ \chi(w, g^d) = b^{\mult_w(\zeta^d)} \tag{1}\label{eq1}\]
for all $w \in \symm_a$ and $d \geq 0$. 
\end{theorem}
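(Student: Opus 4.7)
The plan is to compute $\chi(w, g^d)$ as the cardinality of $\Park^{NC}(a,b)^{(w, g^d)}$ and match it directly to $b^{\mult_w(\zeta^d)}$, extending the argument used in \cite{RatCat} for the case $a < b$. A pair $((P,Q), f)$ is fixed by $(w, g^d)$ precisely when $\rot^d$ fixes $(P,Q)$ and the labeling satisfies $w(f(B)) = f(\rot^d(B))$ for every block $B$. Writing $d' = \gcd(d, b-1)$ and $e = (b-1)/d'$, the first condition places $(P,Q)$ in $NC_{d'}(a,b)$, and the $\rot^d$-orbits on blocks coincide with the $\rot^{d'}$-orbits; each non-central orbit has size dividing $e$, and by Proposition~\ref{rankRot} all blocks in an orbit share a common rank $r$.

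For a single $\rot^d$-orbit $(B_1, \ldots, B_k)$ of blocks of rank $r$, the compatibility condition forces $f(B_1)$ to be a $w^k$-invariant $r$-subset of $[a]$ whose iterates under $w$ yield $k$ pairwise disjoint sets. A cycle-by-cycle analysis shows that if $C$ is a $w$-cycle of length $\ell$, then $f(B_1) \cap C$ is nonempty only when $k \mid \ell$, in which case it is one of the $k$ distinct $w^k$-orbits in $C$ (each of size $\ell/k$), giving $k$ choices per selected cycle. A central block ($k=1$) absorbs any union of entire $w$-cycles totaling $r$ elements, with one choice per selected cycle.

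To assemble $\chi(w, g^d)$, one sums over $(P,Q) \in NC_{d'}(a,b)$ stratified by orbit-rank profile (the multiset of non-central orbit ranks together with the central-block datum) and weights by the number of compatible labelings. Corollary~\ref{multiKrew} enumerates the partition pairs in each stratum by a multinomial coefficient, and the resulting sum factors over the cycles of $w$: each cycle $C$ of length $\ell$ is either (i) unused, (ii) absorbed entirely into the central block, or (iii) assigned to a non-central orbit of size $k \mid \gcd(\ell, e)$ with $k$ internal choices. Summing these options against the multinomial produces a factor of $b$ for each $w$-cycle of length divisible by $e$ and a factor of $1$ otherwise; the $V$-versus-$\C^a$ adjustment is absorbed by the global constraint that the label sets partition $[a]$.

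The main obstacle is this final factorization identity, showing that the combinatorial sum collapses cycle-by-cycle to $b^{\mult_w(\zeta^d)}$. The skeleton parallels \cite{RatCat}, but the presence of nontrivial $Q$-blocks when $a > b$ demands simultaneous bookkeeping for $P$- and $Q$-orbit ranks inside Corollary~\ref{multiKrew}; in particular, the central-block case must be split according to whether the center lies in $P$ or in $Q$, and the edge case $\zeta^d = 1$ (i.e. $(b-1) \mid d$) is treated separately to capture the drop by one in $\mult_w$ passing from $\C^a$ to $V$. Once the cycle-by-cycle collapse is verified, the character formula~\eqref{eq1} follows immediately.
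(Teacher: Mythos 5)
There is a genuine gap at exactly the point you flag as ``the main obstacle.'' Your setup is the same as the paper's for the case $q=(b-1)/d>1$: count fixed points of $(w,g^d)$, observe that the label-set partition $\tau((P,Q),f)$ must be $w$-stable with at most one fixed block and all other block-orbits of size $q$, stratify by that partition, and invoke Corollary~\ref{multiKrew} together with the $m_i!$ matchings and $q$ internal choices per orbit to get the per-stratum count $(b-1)(b-1-q)\cdots(b-1-(t_\sigma-1)q)$. But the sum of these weights over all admissible strata does \emph{not} factor cycle-by-cycle as you assert: the weight depends on the total orbit count $t_\sigma$, which couples the cycles of $w$, and admissible partitions may merge several $w$-cycles into one block. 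So the claimed collapse ``a factor of $b$ per cycle of length divisible by $q$'' cannot be verified by the route you describe, and you give no argument for it. The paper closes this gap by a double count of an auxiliary set: the $(w,g^d)$-equivariant functions $e:[a]\to[b-1]\cup\{0\}$ satisfying $e(w(j))=g^d e(j)$. Counted directly, a cycle of length divisible by $q$ admits $b$ choices (any of the $b-1$ nonzero values on one element, or the constant $0$) and all other cycles are forced to $0$, giving $b^{r_q(w)}$; counted by fiber structure, the fibers form exactly the $(w,q)$-admissible partitions with the same weights $(b-1)(b-1-q)\cdots(b-1-(t_\sigma-1)q)$. Matching the two stratified counts term by term is what yields $|\Park^{NC}(a,b)^{(w,g^d)}|=b^{r_q(w)}$; some such device is needed in your write-up.

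Two smaller issues. First, a non-central block orbit under $\rot^d$ has size exactly $q$, not an arbitrary divisor of $\gcd(\ell,e)$ as you allow: a block fixed by a nontrivial rotation $\rot^{kd}$ with $k<q$ would force $k\geq 2$ mutually crossing blocks unless it is the (unique) central block. Second, the case $\zeta^d=1$ is not merely an ``edge case absorbing the drop by one'': the paper handles it by an entirely different argument, namely an $\symm_a$-equivariant bijection from $\Park^{NC}(a,b)$ to the rational slope parking functions $\Park_{a,b}$ followed by the cycle lemma, producing $b^{c_w-1}$. You should supply that argument (or an equivalent) rather than treating it as a bookkeeping adjustment.
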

\begin{proof}
If $d | (b-1)$ we have

\[ \mult_w(\zeta^d) = \begin{cases} \#(\mbox{cycles of } w) - 1 & \mbox{ if } q=1 \\ \#(\mbox{cycles of $w$ of length divisible by $q$}) & \mbox{ otherwise}\end{cases} \tag{2}\label{eq2}\]
where $q = \frac{b-1}{d}$. To see this, first suppose $q = 1$.  Then $d = b-1$ so $\zeta^d = 1$. The vectors which are fixed by the permutation matrix of $w$ are precisely those which are constant on cycles of $w$. Deleting the all 1's vector leaves us with $\#(\mbox{cycles of } w) - 1$ such linearly independent vectors.  Now suppose $q > 1$.  Then the vectors which increase by a factor of 0 or $\zeta^d$ along cycles of length divisible by $q$, and which are 0 along cycles of length not divisible by $q$, are the eigenvectors with eigenvalue of $\zeta^d$.  Each cycle of length divisible by $q$ contributes one such eigenvector. 

We are now ready to count the number of $a,b$-noncrossing parking functions which are fixed under the action of $(w, g^d)$.  We will handle the cases $q=1$ and $q > 1$ separately.

\textbf{Case 1:} $q=1$.  In this case, $g^d = g^{b-1} = 1$ so we can ignore the action of $\Z_{b-1}$ and just consider elements of $\Park^{NC}(a,b)$ which are fixed by $w \in \symm_a$. To do this, we will construct an equivariant bijection $f: \Park^{NC}(a,b) \to S$ and show that $S$ has the desired character.  

 Let $\Park_{a,b}$ be the set of sequences $(p_1, \ldots, p_a)$ of positive integers whose nondecreasing rearrangement $(p_1' \leq p_2' \leq \cdots \leq p_a')$ satisfies $p_i' \leq \frac{b}{a}(i-1)+1$.  These are called \emph{rational slope parking functions}.  Our choice of $S$ will be $\Park_{a,b}$.  Then $\symm_a$ acts on $\Park_{a,b}$ by 
\[ w.(p_1, \ldots, p_a) = (p_{w(1)}, \ldots, p_{w(a)}).\]

In particular, $w$ fixes precisely those parking functions which are constant on cycles of $w$.  Let $c_w$ denote the number of cycles of $w$ and $\chi(w)$ denote the character of the action.  There are $b^{c_w}$ sequences of length $a$ which are constant on cycles of $w$.  By the cycle lemma, exactly one cyclic rotation of each of these will be a valid rational slope parking function, so we have

\[ \chi(w) = b^{c_w - 1} = b^{\mult_w(1)}. \tag{3}\label{eq3}\]
We are now left to build our equivariant bijection $\phi: \Park^{NC}(a,b) \to \Park_{a,b}$.  Let $((P,Q), f)$ be an $a,b$-noncrossing parking function.  Define $\phi((P,Q), f) = (p_1, \ldots, p_a)$ by 
\[ p_i = \begin{cases} \min(B) & \mbox{ if } B \in P \mbox{ and } i \in f(B) \\ \max(B) + 1 & \mbox{ if } B \in Q \mbox{ and } i \in f(B).\end{cases} \tag{4}\label{eq4}\]
Equivalently, one can think of the pair $((P,Q), f)$ as an $a,b$-Dyck path where the north steps are labeled by the numbers 1 through $a$ and each vertical run has increasing labels.  The underlying dyck path $D$ is such that $\pi(D) = (P,Q)$, and the labels on a particular vertical run that determine the rank of a block $B$ are given by $f(B)$.  

\begin{example}\label{parkEx1}
Consider the labeled 9,4-Dyck path shown in Figure \ref{park94}. This corresponds to the partitions $P = \{\{1,2,3\}\}$ with rank 3 and $Q = \{\{1\}, \{2\}, \{3\}\}$ each with rank 2, and the function $f$ defined by $f(\{1,2,3\}) = \{3, 5, 6\}$, $f(\{1\}) = \{1,8\}$, $f(\{2\}) = \{4,9\}$, and $f(\{3\}) = \{2,7\}$.  The associated rational slope parking function $(p_1, \ldots, p_a)$ is $(2, 4, 1, 3, 1, 1, 4, 2, 3).$  This can be read off from $((P,Q),f)$ via equation \ref{eq4} or by setting $p_i$ equal to 1 greater than the $x$ coordinate of the north step labeled by $i$.  From this point of view, and the fact that $D$ must stay above the line $y = \frac{a}{b}x$, we see that for all $i$, we must have $\frac{i-1}{p_i' - 1} \geq \frac{a}{b}$ which is equivalent to the condition that $p_i' \leq \frac{b}{a}(i-1)+1$. In other words, $(p_1, \ldots, p_a)$ is indeed a sequence in $\Park_{a,b}$.
\end{example}

\begin{figure}[h]
\includegraphics[scale=.5]{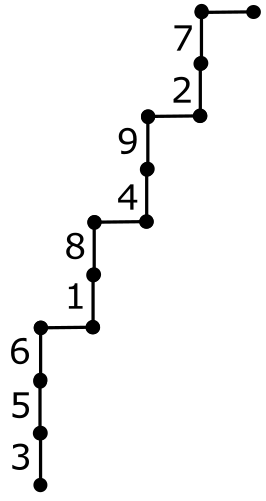}
\caption{}
\label{park94}
\end{figure}

Conversely, suppose we are given $(p_1, \ldots, p_a)$.  Let $n_i$ denote the number of entries of $(p_1, \ldots, p_a)$ which are equal to $i$.  Then we can recover the labeled Dyck path $D$ by setting 
\[ D = N^{n_1}EN^{n_2}E \cdots N^{n_a}E\]
and labeling the vertical run with $x$-coordinate $i - 1$ by the numbers in $\{i \,|\, p_i = 1\}$ in increasing order.  Since permuting labels $i$ and $j$ on the Dyck path corresponds to swapping $p_i$ and $p_j$, we conclude that $\phi$ is in fact an equivariant bijection.  Since $\phi$ preserves character formulas, equation \ref{eq3} implies that $\chi(w, g^d) = b^{\mult_w(\zeta^d)}$ when $q=1$. 

\textbf{Case 2:} $q > 1$. Let $r_q(w)$ denote the number of cycles of $w$ having length divisible by $q$. We will show that 
\[ | \Park^{NC}(a,b)^{(w,g^d)}| = | \{p \in \Park^{NC}(a,b) | (w,g^d).p = p\}| = b^{r_q(w)}.\tag{5}\label{eq5}\]

To do this, we will show that both sides count a certain set of functions.  First, define an action of $g$ on $[b-1] \cup \{0\}$ by the permutation $(1, 2, \ldots, b-1)(0)$.  We say a function $e: [a] \to [b-1] \cup \{0\}$ is $(w, g^d)$-equivariant if 
\[ e(w(j)) = g^d e(j) \tag{6}\label{eq6}\]
 for all $1 \leq j \leq a$. To count such functions, we first consider what happens on cycles of $w$.  By equation \ref{eq6}, if $e(k) \neq 0$ then we have $e(w(k)) = e(k) + d$, where addition is performed modulo $b-1$. Thus, the values $e$ takes on a cycle are completely determined by the value taken on one element of that cycle.  Further, if a cycle has length not divisible by $q$ then equation \ref{eq6} forces $e(k) = 0$ for any $k$ in that cycle. Thus, the number of $(w, g^d)$-equivariant functions is $b^{r_q(w)}$. 

For example, let $(a,b) = (14, 13)$, $q = 3$, and consider 
\[w = (5, 1, 8)(2, 3, 6, 7, 9, 10)(4, 11)(12, 13, 14),\]
 written in cycle notation.  Let $e$ be the function defined by 
\[(e(1), e(2), \ldots e(14)) = (9, 1, 5, 0, 5, 9, 1, 1, 5, 9, 0, 2, 6, 10).\]
Note, for instance, how $w(5) = 1$ and $9 = e(1) = 5 + 4 = e(5) + d$. In this example, $e$ is indeed a $(w, g^d)$-equivariant function.

Next, we count equivariant functions according to their fiber structure. We say a set partition $\sigma = \{B_1, B_2, \ldots \}$ of $[a]$ is \emph{$(w,q)$-admissible} if the following conditions hold:
\begin{enumerate}
\item $\sigma$ is $w$-stable. ie, $w(\sigma) = \{w(B_1), w(B_2), \ldots \} = \sigma$
\item There is at most one block $B_{i_0}$ such that $w(B_{i_0}) = B_{i_0}$
\item For any block $B_i$ which is not $w$-stable, the blocks 
\[B_i, w(B_i), \ldots, w^{q-1}(B_i)\]
 are pairwise distinct, and we have $w^q(B_i) = B_i$. 
\end{enumerate}
Given a $(w, g^d)$-equivariant function $e$, define a set partition by $\sigma$ by $i \sim j$ if and only if $e(i) = e(j)$. For instance, consider the example given above.  Then we have 
\[\sigma = \{\{4, 11\}, \{2, 7, 8\}, \{12\}, \{3, 5, 9\}, \{13\}, \{1, 6, 10\}, \{14\}\}.\]

 In general, $\sigma$ is $w$-stable because if $i \neq 0$ and $B = e^{-1}(i)$ then $w(B) = e^{-1}(i + d)$, and if $i = 0$ then $w(B) = B$. Furthermore, $e^{-1}(0)$ is the only block which is fixed, so $(2)$ is satisfied.  Lastly, since $d, 2d, \ldots, (q-1)d$ are distinct, this means $e^{-1}(i), w(e^{-1}(i)), \ldots, w^{q-1}(e^{-1}(i))$ are distinct, and $w^q(e^{-1}(i)) = e^{-1}(i) + qd = e^{-1}(i)$ since arithmetic is performed modulo $b-1$.  

Each $w$-stable orbit is of size $q$ or size 1, depending on whether its blocks come from the inverse image of nonzero numbers or not. Given a particular fiber structure and $w$, consider how many $(w,g^d)$-equivariant functions could give rise to such a structure. There are $b-1$ choices for how to map some element of the first orbit. In our example, given the orbit containing $\{2, 7, 8\}$, $\{3, 5, 9\}$, and $\{1, 6, 10\}$, we have $b-1$ ways to assign $e(2)$, which then forces $e(7) = e(2)$, $e(8) =e(2)$, $e(3) = e(2) + d$, and so on.  Once this choice is made, the value of $e$ is determined for all elements in the orbit. Since orbits are of size $q$, this eliminates $q$ possible assignments from the next orbit we consider.  In our example, this would give us $b-1 - q = 9$ choices for $e(12)$ in the orbit $\{12\}, \{13\}, \{14\}$. More generally, if we let $t_{\sigma}$ denote the number of $w$-orbits in $\sigma$ of size $q$ then we have
\[ (b-1)(b-1-q) \cdots (b-1- (t_\sigma - 1)q) \]
 $(w, g^d)$-equivariant functions corresponding to a $(w, q)$-admissible set partition $\sigma$.  Thus, there are
\[ \sum_{\substack{\sigma \mbox{ a }(w,q)-\mbox{admissible} \\ \mbox{partition}}} (b-1) (b-1-q) \cdots (b-1- (t_\sigma - 1)q) = b^{r_q(w)} \tag{7}\label{eq7}\]
$(w, g^d)$-equivariant functions. 

To relate this back to parking functions, fix $((P, Q) , f) \in \Park^{NC}(a,b)$. Let $\tau((P,Q), f)$ be the set partition of $[a]$ defined by $i \sim j$ if and only if $i, j \in f(B)$. In Example \ref{parkEx1} we recover the set partition $\tau((P,Q), f) = \{\{3,5,6\}, \{1,8\}, \{4,9\}, \{2,7\}\}$.  More generally, if $((P, Q), f)$ is an element of $\Park^{NC}(a,b)^{(w, g^d)}$ then $\tau(\pi, f)$ is a $(w,q)$-admissible set partition.  It is $w$-stable because if $i \sim j$, then $i \sim j$ after we apply $g^d$, so $w$ must also keep $i$ and $j$ in the same block. There is at most one central block in $(P,Q)$ so at most one $B$ such that $w(B) = B$. Finally, if $((P, Q), f) \in \Park^{NC}(a,b)^{(w, g^d)}$ then $w$ behaves like $\rot^{-d}$ on $(P,Q)$, which proves that $\tau((P,Q), f)$ is $(w,q)$-admissible. 

Given a $(w,q)$-admissible partition $\sigma$ of $[a]$, we will count how many $((P,Q), f) \in \Park^{NC}(a,b)^{(w, g^d)}$ are such that $\tau((P,Q),f) = \sigma$. We begin by constructing the underlying $a,b$-noncrossing partition pair $(P,Q)$.  If $\sigma$ has $m_i$ non-singleton $w$-orbits of blocks of size $i$, then $(P,Q)$ must have $m_i$ $\rot^d$-orbits of non-central blocks of rank $i$. By Corollary \ref{multiKrew}, there are 
\[ {d \choose m_1, \ldots, m_a, d- t_\sigma }\]
such $(P,Q) \in NC_d(a,b)$. It now only remains to define $f$. The $\rot^d$-orbits of noncentral blocks of $(P,Q)$ of rank $i$ must be paired with nonsingleton $w$-orbits of blocks of $\sigma$ of size $i$. For each $i$, there are $m_i!$ ways to perform this matching. Each orbit has size $q$, so there are $q$ ways to choose which block determines labeling of the first blocks in a noncentral $\rot^d$ orbit. Thus, the number of $((P,Q), f) \in  \Park^{NC}(a,b)^{(w, g^d)}$ such that $\tau((P,Q), f) = \sigma$ is given by 
\begin{align*}
q^{m_1}q^{m_2} \cdots q^{m_a} m_1! m_2! \cdots m_a! &{d \choose m_1, m_2, \ldots, d-t_\sigma} \\
&= q^{m_1}q^{m_2} \cdots q^{m_a} \frac{d!}{(d-t_\sigma)!} \\
&= q^{t_\sigma} d(d-1)(d-2) \cdots (d-(t_\sigma - 1)) \\
&= (b-1)(b-1-q) \cdots (b-1- (t_\sigma - 1)q).
\end{align*}
Summing over all $(w,q)$-admissible partitions gives equation \ref{eq7}, so we conclude that \ref{eq5} holds as desired.
\end{proof}

Theorem \ref{park} can be used to generalize Theorem 6.3 in \cite{RatCat} to all coprime $a$ and $b$. In particular, we obtain a rational analog of the Generic Strong Conjecture of \cite{Evidence} in type $A$ for any coprime pair $(a,b)$. Following the definitions and terminology given in \cite{Evidence}, the following holds:

\begin{theorem}
Let $\mathcal{R} \subset \mathsf{Hom}_{\C[\symm_a]}(V^*, \C[V]_b)$ denote the set of \newline $\Theta \in \mathsf{Hom}_{\C[\symm_a]}(V^*, \C[V]_b)$ such that the parking locus $V^\Theta(b) \subset V$ cut out by the ideal 
\[\langle \Theta(x_1)-x_1, \ldots, \Theta(x_{a-1})-x_{a-1}\rangle \subset \C[V]\]
 is reduced, where $x_1, \ldots, x_{a-1}$ is any basis of $V^*$.  For any $\Theta \in \mathcal{R}$, there exists an equivariant bijection of $\symm_a \times \Z_{b-1}$-sets 
\[ V^\Theta(b) \simeq_{\symm_a \times \Z_{b-1}} \Park^{NC}(a,b).\]
There also exists a nonempty Zariski open subset $\mathcal{U}\subseteq \mathsf{Hom}_{\C[\symm_a]}(V^*, \C[V]_b)$ such that $\mathcal{U} \subseteq \mathcal{R}$. 
\end{theorem}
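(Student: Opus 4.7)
The plan is to follow the approach of Theorem~6.3 of \cite{RatCat}, which handled the analogous statement in the case $a<b$. The strategy is to exhibit an $\symm_a\times\Z_{b-1}$-action on $V^\Theta(b)$, compute its permutation character, match it with the character of $\Park^{NC}(a,b)$ that was computed in Theorem~\ref{park}, and then appeal to the fact that a finite $G$-set is determined up to equivariant bijection by its permutation character. The Zariski openness statement will follow from standard scheme-theoretic arguments together with exhibiting a single $\Theta$ in $\mathcal{R}$.

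First, I would set up the two commuting actions. The symmetric group $\symm_a$ acts on $V^\Theta(b)$ via the reflection representation on $V$, and this preserves the ideal $\langle\Theta(x_i)-x_i\rangle$ because $\Theta$ is $\C[\symm_a]$-equivariant. A generator $g$ of $\Z_{b-1}$ acts by scaling by a fixed primitive $(b-1)$th root of unity $\zeta$; this preserves $V^\Theta(b)$ because if $\Theta(x_i)(v)=x_i(v)$ then $\Theta(x_i)(\zeta v)=\zeta^{b}\,x_i(v)=\zeta\cdot x_i(v)=x_i(\zeta v)$, using $\zeta^{b-1}=1$. These two actions manifestly commute.

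Next I would compute the character of the resulting $\symm_a\times\Z_{b-1}$-set. For $\Theta\in\mathcal{R}$, the $(w,g^d)$-fixed points of $V^\Theta(b)$ are exactly the $v\in V^\Theta(b)$ satisfying $w\cdot v=\zeta^{-d}v$, i.e., the points of $V^\Theta(b)$ lying in the $\zeta^{-d}$-eigenspace $E_w(\zeta^{-d})$ of $w$ on $V$. Since $w$ is a permutation matrix and hence real, this eigenspace has complex dimension $\mult_w(\zeta^{-d})=\mult_w(\zeta^{d})$; and since the restricted system consists of $\mult_w(\zeta^{d})$ degree-$b$ equations in as many unknowns, a B\'ezout count combined with the reducedness hypothesis should give
\[ \chi_{V^\Theta(b)}(w,g^d)=b^{\mult_w(\zeta^d)}. \]
This matches the character of $\Park^{NC}(a,b)$ established in Theorem~\ref{park}, so the two $\symm_a\times\Z_{b-1}$-sets have identical permutation characters and are equivariantly in bijection.

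For the Zariski openness claim, reducedness of a zero-dimensional subscheme cut out by a flat family of equations is a Zariski open condition on the parameter space, so it suffices to exhibit one $\Theta\in\mathcal{R}$; this can be adapted from the rational Cherednik algebra construction at parameter $b/a$ as in \cite{ParkingStructures, RatCat}. The hard step will be making the B\'ezout count rigorous, since one must rule out excess intersection, degree drop, or failure of reducedness upon restricting to each eigenspace $E_w(\zeta^{-d})$. Following the deformation argument of \cite{Evidence}, the plan is to verify the count for one well-understood base parameter $\Theta_0$ (the Cherednik-algebraic one), and then invoke upper semicontinuity of fiber dimensions along the resulting flat family to transfer the conclusion to all $\Theta\in\mathcal{R}$; this upgrades the character formula from an inequality to an equality and yields the desired equivariant bijection.
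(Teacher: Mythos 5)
Your overall strategy---recreate the argument of \cite[Sections 4, 5]{Evidence} with Theorem~\ref{park} supplying the combinatorial fixed-point count $b^{\mult_w(\zeta^d)}$---is exactly what the paper does: its proof is a one-sentence deferral to that recreation, substituting the argument of Theorem~\ref{park} for the reference to \cite[Lemma 8.5]{ParkingStructures} inside \cite[Lemma 4.6]{Evidence}. Your setup of the commuting $\symm_a$- and $\Z_{b-1}$-actions and the eigenspace/B\'ezout computation of $|V^\Theta(b)^{(w,g^d)}|$ is the correct geometric half of that argument, and your treatment of the Zariski openness claim matches the intended one.

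However, one step as you state it is false: a finite $G$-set is \emph{not} determined up to equivariant bijection by its permutation character. The character records fixed-point counts only for the cyclic subgroups $\langle (w,g^d)\rangle$ of $G=\symm_a\times\Z_{b-1}$, whereas an isomorphism of $G$-sets requires matching fixed-point counts for \emph{all} subgroups (equality of tables of marks). This is precisely the gap between the ``weak'' and ``strong'' parking conjectures in \cite{ParkingSpaces}, and it is the reason the Generic Strong Conjecture requires genuine work beyond a character computation. The argument in \cite{Evidence} closes this gap by analyzing the point stabilizers: one shows that every stabilizer occurring in $V^\Theta(b)$ and in $\Park^{NC}(a,b)$ lies in a restricted family of subgroups of $\symm_a\times\Z_{b-1}$ (graphs of homomorphisms attached to parabolic-type data), and for $G$-sets whose stabilizers are so constrained the permutation character does determine the multiset of stabilizers and hence the isomorphism type. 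To complete your proof you need to import that stabilizer analysis (or the degeneration argument of \cite{Evidence} that constructs the bijection directly), rather than appealing to character equality alone; with that addition your argument coincides with the paper's.
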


The proof is again a recreation of sections \cite[Sections 4, 5]{Evidence}. The only difference now is that we replace the reference to the proof of \cite[Lemma 8.5]{ParkingStructures} in the proof of \cite[Lemma 4.6]{Evidence} with the corresponding argument in the proof of Theorem \ref{park}. 

\section{Future Work}

The next step in this research is to generalize the results given here to other reflection groups.  Given a reflection group $W$, Reiner \cite{Reiner} defined a $W$-noncrossing partition which reduces precisely to our notion of noncrossing partition when $W = \symm_a$ and $(a,b) = (a, a+1)$.   Explicitly, let $\abs(W)$ denote the poset of $W$ under the absolute order.  We define the poset of noncrossing partitions of $W$ by 
\[ NC(W,c) = [1,c]\]
where $c \in W$ is a Coxeter element. Since $[1,c] \cong [1,c']$ for any choice of Coxeter elements $c$ and $c'$, we may simply write $NC(W)$.  When $W = \symm_a$, we have that $NC(\symm_a)$ is just the usual poset of noncrossing partitions of $[a]$, ordered by refinement.  When $(a,b) = (n, kn+1)$ for some positive integer $k$, $NC(a,b)$ consists of noncrossing partitions with block sizes divisible by $k$.  Armstrong \cite{memoirs} studied a Fuss-Catalan version of these $k$-divisible noncrossing partitions which made sense for any reflection group $W$ by considering $k$-multichains in the lattice of noncrossing partitions $NC(W)$.  It is then natural to ask whether the results obtained here for rational noncrossing partitions may be generalized to other reflection groups.  In type $B$, where $W$ is the group of signed permutations, the combinatorial model \cite[Section 6]{ParkingSpaces}  for noncrossing partitions is the centrally symmetric partitions of $\pm [n]$, those for which at most one block is sent to itself by $n$-fold rotation.  Central symmetry is a concept that makes sense even for rational noncrossing partitions, so there is hope to extend these results to the rational case in type B.  However, it is less clear what to do for the other Weyl groups, and would be nice to have a uniform approach for defining and working with rational noncrossing partitions for any Weyl group.

\end{document}